\newtheorem{theorem}{Theorem}[section]
\newtheorem{lemma}[theorem]{Lemma}
\newtheorem{corollary}[theorem]{Corollary}
\newtheorem{proposition}[theorem]{Proposition}
\theoremstyle{definition}
\newtheorem{definition}[theorem]{Definition}
\theoremstyle{remark}
\newtheorem{remark}[theorem]{Remark}
\begin{document}

\title[Characterizations of product Hardy spaces]{%
Characterizations of product Hardy spaces
on stratified groups
by singular integrals and maximal functions}

\author{Michael G. Cowling}
\address{School of Mathematics and Statistics, University of New South Wales, Sydney 2052, Australia}
\email{m.cowling@unsw.edu.au}
\author{Zhijie Fan}
\address{Department of Mathematics, Wuhan University, 430072, P. R. China}
\email{ZhijieFan@whu.edu.cn}
\author{Ji Li}
\address{Department of Mathematics, Macquarie University, NSW, 2109, Australia} \email{ji.li@mq.edu.au}
\author{Lixin Yan}
\address{Department of Mathematics, Sun Yat-sen (Zhongshan) University, Guang\-zhou, \phantom{M} 510275, P. R. China, and
Department of Mathematics, Macquarie University, NSW 2109, Australia}
\email{mcsylx@mail.sysu.edu.cn}

\date{}


\begin{abstract}
A large part of the theory of Hardy spaces on products of Euclidean spaces has been extended to the setting of products of stratified Lie groups.
This includes characterisation of $\mathsf{H}^1$ by square functions and by atomic decompositions, proof of the duality of $\mathsf{H}^1$ with $\mathsf{BMO}$, and description of many interpolation spaces.
Until now, however, two aspects of the classical theory have been conspicuously absent: the characterisation of $\mathsf{H}^1$ by singular integrals (of Christ--Geller type) or by (vertical or nontangential) maximal functions.
In this paper we fill in these gaps by developing new techniques on products of stratified groups, using the ideas in \cite{CCLLO} on the Heisenberg group with flag structure.
\end{abstract}

\maketitle

\section{Introduction and statement of main results}

Hardy spaces first appeared in the study of boundary behaviour of holomorphic functions on the disc and upper half plane.
The modern theory of Hardy spaces began in 1960, when E.~M.~Stein and G.~Weiss \cite{SW60} considered functions defined on $\mathbb{R}^n \times \mathbb{R}^+$, and it took off in the early 1970s, with the remarkable work of C.~Fefferman and Stein \cite{FeS} and then R.~R.~Coifman and Weiss \cite{CW2}.
Much of this theory has been extended to more general spaces of homogeneous type, in the sense of Coifman and Weiss \cite{CW1, CW2}.
In the late 1970s, G.~B.~Folland and Stein \cite{FoSt} characterised the Hardy space $\mathsf{H}^1(G)$ on a stratified group $G$ in terms of atomic decompositions, square functions, area functions, and maximal functions.
The area integrals and maximal functions involve taking integrals or suprema over cones in $G \times \mathbb{R}_{+}$.
Soon after, M.~Christ and D.~Geller \cite{CG} showed that there are singular integral operators $\mathcal{R}_0, \dots, \mathcal{R}_n$ on a stratified Lie group such that $f \in \mathsf{H}^1(G)$ if and only if all $\mathcal{R}_jf \in \mathsf{L}^1(G)$.
Here $\mathcal{R}_0$ is the identity operator and the other $\mathcal{R}_j$ are Riesz transformations, that is, convolutions with derivatives of a potential.

Harmonic analysis on product spaces $\mathbb{R}^m \times \mathbb{R}^n$ was born in the late 1970s and studied extensively in the 1980s, in particular by S.-Y. A. Chang, R. Fefferman, R.~F.~Gundy, J.-L.~Journ\'e, J. Pipher, and Stein (see \cite{CF1, FSt, GS, Jo, KM, P}), motivated by problems on the boundary behavior of holomorphic functions in several complex variables, which require consideration of approach regions that behave differently in different variables.
Harmonic analysis on product spaces is influenced by classical harmonic analysis, but is different in that the different factors in the product may be dilated independently.
The terms one-parameter and multiparameter are often used to highlight the different structures of the dilations considered.
As in classical harmonic analysis, an important part of  the theory is the development of Hardy and BMO spaces, their duality and the connections to atomic decompositions.
A key ingredient is Journ\'e's covering lemma, which provides a tool to replace general open sets by rectangles with controlled geometry.

Since the 1980s, the development of multiparameter harmonic analysis proceeded apace;
recent contributions in the area include  \cite{fl, HyM, lppw, ops, Ou}.
Much of the product space theory on $\mathbb{R}^m \times \mathbb{R}^n$ has been extended to more general product spaces, including the duality of $\mathsf{H}^1$ with $\mathsf{BMO}$, characterisation of $\mathsf{H}^1$ by square functions and atomic decompositions, and description of various interpolation spaces.
In \cite{CDLWY, HLL, HLPW, HLW}, the theory of Hardy spaces $H^p$, for $p$ less than and close to $1$, has been developed on products $X_1\times X_2$ of spaces of homogeneous type.
Hence on products $G_1 \times G_2$ of stratified Lie groups, there is already a well-defined Hardy space $\mathsf{H}^1(G_1 \times G_2)$ that may be characterised by atomic decompositions and by square or area functions.

Two aspects of the classical theory that have been conspicuous by their absence until now are a singular integral characterisation of Christ--Geller type and a maximal function characterisation.
The main difficulty is that the geometrical structure is harder to handle than in the one-parameter case.
For example, one may obtain the atomic decomposition from the nontangential maximal function in the one-parameter case by using the classical Calder\'on--Zygmund and Whitney decompositions involving cubes, but
these decompositions are absent in the multiparameter case.

This paper fills these gaps for products of stratified Lie groups, with Theorems \ref{thm Riesz} and \ref{thm max} below.
For simplicity, and because new methods would otherwise be needed, we consider products of only two groups.
Our new techniques come from \cite{CCLLO}, where similar results are proved on the Heisenberg group with its flag structure.

Unexplained definitions may be found below.

\begin{theorem}\label{thm Riesz}
The double Riesz transformations $\mathcal{R}_{j_1}^{[1]} \otimes \mathcal{R}_{j_2}^{[2]}$ characterise the Hardy space $\mathsf{H}^1(G_1\times G_2)$.
That is, $f\in \mathsf{H}^1(G_1\times G_2)$ if and only if each $\mathcal{R}_{j_1}^{[1]} \otimes \mathcal{R}_{j_2}^{[2]} f$ is in $\mathsf{L}^{1}(G_1\times G_2)$, and moreover
\[
\Vert f \Vert_{\mathsf{H}^1(G_1\times G_2) }
\eqsim \sum_{j_1=0}^{d_1}\sum_{j_2=0}^{d_2}\left\Vert  \mathcal{R}_{j_1}^{[1]} \otimes \mathcal{R}_{j_2}^{[2]} f \right\Vert_{\mathsf{L}^{1}(G_1\times G_2)}.
\]
\end{theorem}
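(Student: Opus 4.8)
The plan is to prove the two inequalities separately, as is standard for such characterisations, and the difficulty lies entirely in one of them.

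\emph{Necessity.} First I would show that each double Riesz transformation maps $\mathsf{H}^1(G_1\times G_2)$ boundedly into $\mathsf{L}^1(G_1\times G_2)$. Writing $\mathcal{R}_j^{[i]}=X_{i,j}L_i^{-1/2}$ (a horizontal derivative composed with the potential $L_i^{-1/2}$), each factor is a one-parameter Christ--Geller--Calder\'on--Zygmund operator on $G_i$ that is left-invariant, so the tensor product $\mathcal{R}_{j_1}^{[1]}\otimes\mathcal{R}_{j_2}^{[2]}$ is a product singular integral whose kernel obeys the bi-parameter Calder\'on--Zygmund estimates together with the cancellation inherited from the derivatives $X_{i,j}$. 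Operators of this (Journ\'e) class are bounded on $\mathsf{H}^1(G_1\times G_2)$, and since $\mathsf{H}^1(G_1\times G_2)\hookrightarrow\mathsf{L}^1(G_1\times G_2)$ this gives $\|\mathcal{R}_{j_1}^{[1]}\otimes\mathcal{R}_{j_2}^{[2]}f\|_{\mathsf{L}^1}\lesssim\|f\|_{\mathsf{H}^1}$. The term $j_1=j_2=0$ is the identity and merely reproduces $\|f\|_{\mathsf{L}^1}\le\|f\|_{\mathsf{H}^1}$, so summing over the finitely many pairs yields the upper bound $\sum_{j_1,j_2}\|\cdots\|_{\mathsf{L}^1}\lesssim\|f\|_{\mathsf{H}^1}$.

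\emph{Sufficiency: reduction and lifting.} This is the substantial direction. I would begin from the known characterisation of $\mathsf{H}^1(G_1\times G_2)$ by a Littlewood--Paley square function, in its heat-gradient form
\[
\|f\|_{\mathsf{H}^1}\eqsim\Big\|\Big(\sum_{k_1=1}^{d_1}\sum_{k_2=1}^{d_2}\int_0^\infty\!\!\int_0^\infty \big|t_1^{1/2}t_2^{1/2}X_{1,k_1}X_{2,k_2}e^{-t_1L_1}e^{-t_2L_2}f\big|^2\,\tfrac{dt_1}{t_1}\tfrac{dt_2}{t_2}\Big)^{1/2}\Big\|_{\mathsf{L}^1}.
\]
The key algebraic step is the lifting identity $t_i^{1/2}X_{i,k}e^{-t_iL_i}=\mathcal{R}_{k}^{[i]}\,\tilde{Q}^{[i]}_{t_i}$, where $\tilde{Q}^{[i]}_{t_i}=t_i^{1/2}L_i^{1/2}e^{-t_iL_i}$ is a genuine Littlewood--Paley operator, its multiplier satisfying the Calder\'on condition $\int_0^\infty t\lambda e^{-2t\lambda}\,dt/t=\tfrac12$. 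Because every operator in sight is left-invariant and hence commutes, the integrand equals $|\tilde{Q}^{[1]}_{t_1}\tilde{Q}^{[2]}_{t_2}(\mathcal{R}_{k_1}^{[1]}\otimes\mathcal{R}_{k_2}^{[2]}f)|^2$. Thus $\|f\|_{\mathsf{H}^1}$ is comparable to the $\mathsf{L}^1$ norm of the joint square function of the conjugate system $h_{k_1k_2}:=\mathcal{R}_{k_1}^{[1]}\otimes\mathcal{R}_{k_2}^{[2]}f$, each component of which lies in $\mathsf{L}^1$ by hypothesis.

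\emph{The main obstacle.} One cannot finish by bounding the square function of each $h_{k_1k_2}$ by its $\mathsf{L}^1$ norm: the bi-parameter square function is only of weak type $(1,1)$, never bounded on $\mathsf{L}^1$. This failure is the essential difficulty, and it is exactly the point at which the classical Fefferman--Stein argument abandons the square function in favour of the nontangential maximal function. I would resolve it by passing to the Poisson-type extensions $e^{-t_i\sqrt{L_i}}h_{k_1k_2}$ and regarding the full family $F=(h_{k_1k_2})_{0\le k_i\le d_i}$ (now including the untransformed components, so that $|F|\ge|f|$) as a conjugate system, annihilated in each variable by the degenerate Laplacian $\partial_{t_i}^2+\sum_k X_{i,k}^2$. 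The crux is then to establish subharmonicity of $|F|^{p}$ for a suitable $p<1$ in each variable with the other frozen; iterating the resulting one-parameter subharmonic majorisation bounds the bi-parameter nontangential maximal function of the extension of $f$ by the strong maximal function applied to $|F|^{p}\big|_{t=0}$. Since $1/p>1$ the strong maximal operator is bounded on $\mathsf{L}^{1/p}$, which converts the $\mathsf{L}^1$ data $\sum_{k_1,k_2}\|h_{k_1k_2}\|_{\mathsf{L}^1}$ into an $\mathsf{L}^1$ bound for the maximal function, whence $f\in\mathsf{H}^1$ via the maximal characterisation established in this paper (Theorem \ref{thm max}); the same majorisation controls the area integral if one prefers the known square-function characterisation. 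The hard part is making the subharmonicity work on stratified groups, where the sub-Laplacian replaces the Laplacian and the conjugate structure is only approximate, and in handling the two-parameter interaction: the independently acting dilations preclude Whitney cubes and force a Journ\'e-type covering argument. This is precisely where the techniques developed in \cite{CCLLO} for the flag Heisenberg setting are adapted.
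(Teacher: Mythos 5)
Your necessity direction matches the paper's (which simply cites the known $\mathsf{H}^1\to\mathsf{L}^1$ boundedness of product singular integrals with homogeneous kernels from \cite{HLPW, CDLWY}), and your reduction via the lifting identity $t^{1/2}X_{i,k}e^{-tL_i}=\mathcal{R}^{[i]}_k\tilde{Q}^{[i]}_{t_i}$ is algebraically sound. The problem is the step you lean on to get past the ``main obstacle'': subharmonicity of $|F|^p$ for the conjugate system $F=(h_{k_1k_2})$. On a stratified group this is not an adaptation issue to be worked out --- it is precisely the classical tool that is \emph{unavailable} for sub-Laplacians, and its failure is the reason Christ and Geller had to invent an entirely different method even in the one-parameter case \cite{CG}. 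The Euclidean proof of subharmonicity of $|F|^p$ rests on the generalized Cauchy--Riemann structure of $(\partial_t u, \partial_1 u,\dots,\partial_n u)$ and an eigenvalue computation for the Hessian; for $(\partial_t, X_1,\dots,X_n)$ with non-commuting horizontal fields no such system exists, the ``conjugate structure is only approximate'' (as you say), and no exponent $p<1$ is known to work. Your plan therefore defers the entire difficulty of the theorem to an unproved (and likely false) lemma, and the subsequent two-parameter iteration inherits the same gap in each variable.

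The paper avoids this obstruction completely by treating the one-parameter Christ--Geller theorem as a black box and never touching harmonic extensions in the sufficiency direction. Concretely (Proposition \ref{SIO}): take a dyadic partition of unity $\eta$ for the spectrum of $\mathcal{L}_i$, randomize it with Rademacher variables, $T_s=\sum_m r_m(s)\eta(2^{-m}\mathcal{L}_i)$, note that $T_s$ is a spectral multiplier bounded from $\mathsf{H}^1(G_i)$ to $\mathsf{L}^1(G_i)$ uniformly in $s$, and compose with the one-parameter Christ--Geller characterisation to get $\Vert T_sf\Vert_{\mathsf{L}^1}\lesssim\sum_j\Vert\mathcal{K}^{[i]}_jf\Vert_{\mathsf{L}^1}$. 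Iterating this over the two factors (using that convolutions on $G_1$ and $G_2$ commute) and then applying Khinchin's inequality produces the $\mathsf{L}^1$ norm of the discrete double Littlewood--Paley square function, which by Proposition \ref{prop equiv Hardy via L-P} is equivalent to $\Vert f\Vert_{\mathsf{H}^1(\boldsymbol{G})}$. If you want to salvage your outline, the fix is to replace the subharmonicity/maximal-function machinery with a randomisation argument of this kind, which converts the problem of bounding a square function on $\mathsf{L}^1$ (impossible) into bounding a randomised linear multiplier from $\mathsf{H}^1$ to $\mathsf{L}^1$ (standard).
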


Using Theorem \ref{thm Riesz} and the $\mathsf{H}^1$-$\mathsf{BMO}$ duality (see for example \cite{HLL}), we obtain a decomposition of functions in the product space $\mathsf{BMO}( G_1\times G_2)$.

\begin{corollary}\label{cor BMO}
For a function $u$ on $G_1\times G_2$, the following are equivalent:
\begin{enumerate}
 \item[(a)] $u \in \mathsf{BMO}( G_1\times G_2) $;
 \item[(b)] there exist $g_{j_1, j_2}\in \mathsf{L}^\infty( G_1\times G_2 )$ such that
\[
u
=\sum_{j_1=0}^{d_1}\sum_{j_2=0}^{d_2} \mathcal{R}_{j_1}^{[1]} \otimes \mathcal{R}_{j_2}^{[2]}(g_{j_1, j_2}).
\]
\end{enumerate}
\end{corollary}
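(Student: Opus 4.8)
The plan is to deduce the corollary from Theorem \ref{thm Riesz} by a duality argument of Fefferman--Stein type, exploiting the fact that the Riesz transformations are, up to sign, their own adjoints. Write $N=(d_1+1)(d_2+1)$ and, for $f\in\mathsf{H}^1(G_1\times G_2)$, consider the vector-valued map
\[
Tf=\bigl(\mathcal{R}_{j_1}^{[1]}\otimes\mathcal{R}_{j_2}^{[2]}f\bigr)_{0\le j_1\le d_1,\,0\le j_2\le d_2}
\]
into the direct sum $\bigoplus^{N}\mathsf{L}^1(G_1\times G_2)$ with norm $\sum_{j_1,j_2}\|\cdot\|_{\mathsf{L}^1}$. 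By Theorem \ref{thm Riesz} the map $T$ is bounded and bounded below, hence an isomorphism of $\mathsf{H}^1(G_1\times G_2)$ onto a closed subspace $\mathcal{V}=T(\mathsf{H}^1)$ of this direct sum. I shall also use that the formal adjoint of each $\mathcal{R}_{j}^{[i]}$ is again a transform of the same family: $\mathcal{R}_0^{[i]}$ is the identity and self-adjoint, while for $j\ge 1$ the operator $\mathcal{R}_j^{[i]}$ is convolution with a derivative of a potential and so anti-symmetric, giving $(\mathcal{R}_j^{[i]})^{\ast}=-\mathcal{R}_j^{[i]}$. Since $(\mathcal{R}_{j_1}^{[1]}\otimes\mathcal{R}_{j_2}^{[2]})^{\ast}=\pm\,\mathcal{R}_{j_1}^{[1]}\otimes\mathcal{R}_{j_2}^{[2]}$, Theorem \ref{thm Riesz} applies verbatim to the adjoint family.

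For the implication (b)$\Rightarrow$(a), suppose $u=\sum_{j_1,j_2}\mathcal{R}_{j_1}^{[1]}\otimes\mathcal{R}_{j_2}^{[2]}(g_{j_1,j_2})$ with $g_{j_1,j_2}\in\mathsf{L}^\infty$. For $f$ in a suitable dense class of $\mathsf{H}^1(G_1\times G_2)$ I would move the transforms onto $f$, writing
\[
\langle u,f\rangle=\sum_{j_1,j_2}\langle g_{j_1,j_2},(\mathcal{R}_{j_1}^{[1]}\otimes\mathcal{R}_{j_2}^{[2]})^{\ast}f\rangle,
\]
and bound the right-hand side by $\max_{j_1,j_2}\|g_{j_1,j_2}\|_{\mathsf{L}^\infty}$ times $\sum_{j_1,j_2}\|(\mathcal{R}_{j_1}^{[1]}\otimes\mathcal{R}_{j_2}^{[2]})^{\ast}f\|_{\mathsf{L}^1}$, which by Theorem \ref{thm Riesz} for the adjoint family is $\lesssim\|f\|_{\mathsf{H}^1}$. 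Thus $u$ extends to a bounded functional on $\mathsf{H}^1(G_1\times G_2)$, and the cited $\mathsf{H}^1$--$\mathsf{BMO}$ duality yields $u\in\mathsf{BMO}(G_1\times G_2)$.

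For the converse (a)$\Rightarrow$(b), I would run the Hahn--Banach argument. Given $u\in\mathsf{BMO}(G_1\times G_2)$, duality realises it as a bounded functional $\Lambda$ on $\mathsf{H}^1(G_1\times G_2)$. Transporting $\Lambda$ through the isomorphism $T$ gives a bounded functional on the closed subspace $\mathcal{V}\subset\bigoplus^{N}\mathsf{L}^1$, which by Hahn--Banach extends to the whole direct sum. As $(\bigoplus^{N}\mathsf{L}^1)^{\ast}=\bigoplus^{N}\mathsf{L}^\infty$, this extension is represented by a family $(h_{j_1,j_2})\in\bigoplus^{N}\mathsf{L}^\infty$, so that for all $f$,
\[
\langle u,f\rangle=\sum_{j_1,j_2}\langle h_{j_1,j_2},\mathcal{R}_{j_1}^{[1]}\otimes\mathcal{R}_{j_2}^{[2]}f\rangle=\sum_{j_1,j_2}\langle(\mathcal{R}_{j_1}^{[1]}\otimes\mathcal{R}_{j_2}^{[2]})^{\ast}h_{j_1,j_2},f\rangle.
\]
Setting $g_{j_1,j_2}=\pm h_{j_1,j_2}$ to absorb the signs from the adjoints produces the representation in (b).

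The step I expect to be the most delicate is the bookkeeping around the pairing $\langle u,f\rangle$: because $\mathsf{BMO}$ is defined modulo constants and functions in $\mathsf{H}^1(G_1\times G_2)$ carry cancellation conditions in each factor, one must verify that the duality pairing is well defined and that transferring the adjoint transforms onto $f$ is legitimate on a dense class (for example, finite combinations of product atoms, or test functions with the requisite mean-zero properties in each variable). Once this is in place, the identification of $(\bigoplus^{N}\mathsf{L}^1)^{\ast}$ and the anti-symmetry of the Riesz transformations render the remaining manipulations routine.
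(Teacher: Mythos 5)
Your overall strategy --- realise $u$ as a bounded functional on $\mathsf{H}^1(G_1\times G_2)$, transport it through the embedding $f\mapsto(\mathcal{R}_{j_1}^{[1]}\otimes\mathcal{R}_{j_2}^{[2]}f)_{j_1,j_2}$, and apply Hahn--Banach together with $(\bigoplus\mathsf{L}^1)^{\ast}=\bigoplus\mathsf{L}^\infty$ --- is exactly the Fefferman--Stein decomposition argument that the paper has in mind (it offers no further detail, saying only that the corollary follows from Theorem \ref{thm Riesz} and the $\mathsf{H}^1$--$\mathsf{BMO}$ duality). However, there is one genuine error in your execution: the claim that $(\mathcal{R}_j^{[i]})^{\ast}=-\mathcal{R}_j^{[i]}$. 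On a nonabelian stratified group the left-invariant field $\mathcal{X}_j$ does not commute with $\mathcal{L}^{-1/2}$, so the adjoint of $\mathcal{R}_j=\mathcal{X}_j\mathcal{L}^{-1/2}$ is $-\mathcal{L}^{-1/2}\mathcal{X}_j$, which is a \emph{different} singular integral operator (its convolution kernel is $g\mapsto -\mathcal{X}_jk_{-1/2}(g^{-1})$, not $-\mathcal{X}_jk_{-1/2}(g)$). This matters in both directions: in (b)$\Rightarrow$(a) you invoke ``Theorem \ref{thm Riesz} for the adjoint family'' on the strength of the false identity, and in (a)$\Rightarrow$(b) the Hahn--Banach step as you have set it up produces $u=\sum_{j_1,j_2}(\mathcal{R}_{j_1}^{[1]}\otimes\mathcal{R}_{j_2}^{[2]})^{\ast}h_{j_1,j_2}$, and no choice of signs converts this into the representation by the operators $\mathcal{R}_{j_1}^{[1]}\otimes\mathcal{R}_{j_2}^{[2]}$ asserted in (b).

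The repair is routine but must be made explicitly. For (b)$\Rightarrow$(a) you only need that each $(\mathcal{R}_{j_1}^{[1]}\otimes\mathcal{R}_{j_2}^{[2]})^{\ast}$ is a product singular integral operator with homogeneous kernels, hence bounded from $\mathsf{H}^1(\boldsymbol{G})$ to $\mathsf{L}^1(\boldsymbol{G})$ by the results quoted in Section 3; the anti-symmetry is irrelevant there. For (a)$\Rightarrow$(b) you should run the Hahn--Banach argument with the \emph{adjoint} family in the role of the embedding, i.e.\ with $Tf=((\mathcal{R}_{j_1}^{[1]}\otimes\mathcal{R}_{j_2}^{[2]})^{\ast}f)_{j_1,j_2}$, so that the double adjoint returns the original operators and yields precisely the representation in (b). This requires knowing that the one-parameter adjoint families $\{\mathcal{I},\,\mathcal{L}_i^{-1/2}\mathcal{X}_1^{[i]},\dots\}$ also characterise $\mathsf{H}^1(G_i)$ --- which follows from the general form of the Christ--Geller theorem (their hypotheses are stable under taking adjoints), after which the paper's Theorem \ref{thm:SIO-Hardy} applies to that family --- but it does not follow from Theorem \ref{thm Riesz} ``verbatim''. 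Your closing remarks about the well-definedness of the pairing modulo constants and the need for a dense class with product cancellation correctly identify the remaining delicate point.
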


Write $\Gamma(g_1,g_2)$ for the product $\Gamma_1(g_1)\times \Gamma_2(g_2)$ of the cones treated by Folland and Stein \cite{FSt}, and for suitable functions $\psi^{[i]}$ on $G_i$, define the nontangential maximal function:
\[
\mathcal{N}_{\psi}(f)(g_1,g_2)
:=\sup \Bigl\{
\bigl| f \ast (\psi_{t_1}^{[1]}\otimes\psi_{t_2}^{[2]})(h_1,h_2) \bigr| :
(h_1,h_2) \in \Gamma(g_1,g_2), t_1, t_2 \in \mathbb{R}_{+} \Bigr\} ,
\]
where $\psi_{t_i}^{[i]}$ is a normalised dilate of $\psi^{[i]}$.

\begin{theorem}\label{thm max}
The nontangential maximal operator $\mathcal{N}_{\psi}$ characterises the Hardy space $\mathsf{H}^1(G_1\times G_2)$.
That is, $f\in \mathsf{H}^1(G_1\times G_2)$ if and only if $\mathcal{N}_{\psi}f$ is in $\mathsf{L}^{1}(G_1\times G_2)$; moreover
\[
\Vert f \Vert_{\mathsf{H}^1(G_1\times G_2) }
\eqsim \left\Vert  \mathcal{N}_{\psi}(f) \right\Vert_{\mathsf{L}^{1}(G_1\times G_2)}.
\]
\end{theorem}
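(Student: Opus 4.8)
The plan is to prove the two inequalities $\Vert\mathcal{N}_{\psi}(f)\Vert_{\mathsf{L}^1}\lesssim\Vert f\Vert_{\mathsf{H}^1}$ and $\Vert f\Vert_{\mathsf{H}^1}\lesssim\Vert\mathcal{N}_{\psi}(f)\Vert_{\mathsf{L}^1}$ separately, using as a bridge the already-known characterisation of $\mathsf{H}^1(G_1\times G_2)$ by a Folland--Stein area (square) function $\mathcal{S}$, so that $\Vert f\Vert_{\mathsf{H}^1}\eqsim\Vert\mathcal{S}(f)\Vert_{\mathsf{L}^1}$. Throughout I would first reduce to $f\in\mathsf{L}^2(G_1\times G_2)\cap\mathsf{H}^1(G_1\times G_2)$, where all the quantities involved are a priori finite, and remove this restriction at the end by a density and limiting argument; the point is to obtain the two norm inequalities with constants independent of $f$. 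Theorem~\ref{thm Riesz} may also be invoked to supply the a priori membership in $\mathsf{H}^1$ and a convenient handle on $\Vert f\Vert_{\mathsf{H}^1}$.

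For the more direct direction $\Vert\mathcal{N}_{\psi}(f)\Vert_{\mathsf{L}^1}\lesssim\Vert\mathcal{S}(f)\Vert_{\mathsf{L}^1}$, I would expand $f$ by the bi-parameter discrete Calder\'on reproducing formula, writing $f$ as a superposition over dyadic scales $(t_1,t_2)$ of the building blocks $(\phi_{t_1}^{[1]}\otimes\phi_{t_2}^{[2]})\ast f$. Substituting this into $f\ast(\psi_{t_1}^{[1]}\otimes\psi_{t_2}^{[2]})$ and using the almost-orthogonality estimates for the composed kernels $\psi_{s}^{[i]}\ast\phi_{t}^{[i]}$ on each factor (rapid decay in $\min(s/t,t/s)$, coming from the smoothness and cancellation of $\psi^{[i]}$ and $\phi^{[i]}$), I would dominate $\mathcal{N}_{\psi}(f)$ pointwise by a tent-space maximal function of the Littlewood--Paley coefficients. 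Integrating this bound over $G_1\times G_2$ and invoking the $\mathsf{L}^1$ identification of the tent space with $\Vert\mathcal{S}(f)\Vert_{\mathsf{L}^1}$ closes this direction; the two-parameter cone geometry in $\Gamma(g_1,g_2)$ factors, so no covering lemma is needed here.

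The substantive direction is $\Vert\mathcal{S}(f)\Vert_{\mathsf{L}^1}\lesssim\Vert\mathcal{N}_{\psi}(f)\Vert_{\mathsf{L}^1}$, for which I would run a good-$\lambda$ argument at the level of distribution functions. For $\lambda>0$ set $\Omega_\lambda=\{\mathcal{N}_{\psi}(f)>\lambda\}$, an open subset of $G_1\times G_2$, and estimate the measure of $\{\mathcal{S}(f)>2\lambda,\ \mathcal{N}_{\psi}(f)\le\gamma\lambda\}$ in terms of $\gamma^2\lvert\Omega_\lambda\rvert$. On the truncated cones lying over the set where $\mathcal{N}_{\psi}(f)$ is small, the local contribution to $\mathcal{S}(f)$ is controlled by the $\mathsf{L}^2$-boundedness of $\mathcal{S}$ together with a localisation of $f$ to the region where $\mathcal{N}_{\psi}(f)\le\gamma\lambda$, which converts the $\mathsf{L}^\infty$-in-$(h,t)$ bound coming from $\mathcal{N}_{\psi}$ into the desired $\mathsf{L}^2$-in-$(h,t)$ control. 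Integrating the resulting good-$\lambda$ inequality in $\lambda$ then yields the claim.

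The hard part will be the product geometry in this last step. In the one-parameter setting one localises $\Omega_\lambda$ by a Whitney decomposition into cubes, but such decompositions are unavailable on $G_1\times G_2$; instead I would decompose $\Omega_\lambda$ into maximal dyadic rectangles and enlarge it to a set $\widetilde{\Omega}_\lambda$ whose measure is still comparable to $\lvert\Omega_\lambda\rvert$, using Journ\'e's covering lemma on $G_1\times G_2$ to control the enlargement. Verifying that the truncated cones over the complement of $\widetilde{\Omega}_\lambda$ fit inside the region where $\mathcal{N}_{\psi}(f)\le\gamma\lambda$, and that the Journ\'e estimate produces exactly the summability needed to absorb the resulting error terms, is the technical heart of the argument; this is precisely where the techniques of \cite{CCLLO} enter, adapted from the Heisenberg flag setting to the present product of stratified groups. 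Once both inequalities are established, combining them with $\Vert f\Vert_{\mathsf{H}^1}\eqsim\Vert\mathcal{S}(f)\Vert_{\mathsf{L}^1}$ gives the stated equivalence.
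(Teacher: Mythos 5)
There is a genuine gap, and it sits exactly at the point you yourself identify as ``the technical heart''. The good-$\lambda$ skeleton for $\Vert \mathcal{S}(f)\Vert_{\mathsf{L}^1}\lesssim\Vert\mathcal{N}_{\psi}(f)\Vert_{\mathsf{L}^1}$ is the right shape, but the engine is missing: you cannot convert the $\mathsf{L}^\infty$-in-$(\boldsymbol{h},\boldsymbol{t})$ bound $\mathcal{N}_{\psi}(f)\le\gamma\lambda$ into $\mathsf{L}^2$ control of $f\ast\psi_{\boldsymbol{t}}$ over the cones above the good set by ``localising $f$'' and invoking the $\mathsf{L}^2$-boundedness of $\mathcal{S}$. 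Restricting $f$ to a set destroys the relation between $f$ and its maximal function, and $\mathcal{S}(f\chi_E)$ on $E$ is not controlled by $\mathcal{N}_{\psi}(f)$; in one parameter this is precisely the step that requires the Calder\'on--Zygmund/Whitney machinery (or Merryfield's lemma, or harmonicity), and the paper states explicitly that the first two are unavailable here. Journ\'e's covering lemma does not fill the hole either: it is a device for summing estimates over the maximal pseudodyadic rectangles of an open set (the paper uses it only in the \emph{easy} direction, to handle atoms), and it provides no localisation of $f$ itself. The paper's actual mechanism, following \cite{CCLLO}, is entirely different: one works with the Poisson kernel, exploits harmonicity in the form $\left|\slashed{\nabla}(f\ast P_{t})\right|^2=-\tfrac12\slashed{\mathcal{L}}\bigl((f\ast P_{t})^2\bigr)$, inserts the smooth cutoff $\eta\bigl(\chi_{L_{\beta}(\alpha)}\ast P_{\boldsymbol{t}}\bigr)$ adapted to the good set, and integrates by parts in each factor separately; the main term produces $\int_{L_\beta(\alpha)}|f|^2$ and the error terms are absorbed by Littlewood--Paley theory. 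None of this appears in your proposal, and without it (or a genuine substitute) the good-$\lambda$ inequality does not close.

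A second omission: your argument is phrased for a general $\psi$ throughout, but the harmonicity tools above are only available for the Poisson kernel, so a separate step is needed to pass from $\mathsf{H}^1_{\max,P,\beta}$ to $\mathsf{H}^1_{\max,\psi,\beta}$ for general molecules $\psi^{[i]}$ with nonvanishing integral. The paper does this in Proposition \ref{prop equiv max} via the grand maximal function, the discrete Calder\'on reproducing formula, and a Plancherel--P\'olya/$\theta$-power trick with the strong maximal operator; your proposal does not address this reduction. (Your route for the easy direction --- reproducing formula, almost orthogonality and tent spaces rather than the paper's atoms-plus-Journ\'e argument --- is a legitimate alternative in outline, though the claimed pointwise domination of $\mathcal{N}_{\psi}(f)$ by a square-function quantity would need care; this is not where the real difficulty lies.)
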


This paper is organised as follows.
In Section 2, we remind the reader of some background on stratified Lie groups and analysis thereupon, and introduce some notation to simplify the formulae in the case of products of such groups.
In Section 3, we review some of the main results on Hardy spaces on products of stratified groups; many of these are valid in the more general context of products of spaces of homogeneous type.
Then we prove our main theorems on the characterisations of $\mathsf{H}^1(G_1 \times G_2)$, by Riesz transforms in Sections 4 and by maximal functions in Section 5.
The results proved are actually somewhat more general than stated in Theorems \ref{thm Riesz} and \ref{thm max}, but precise statements require more notation than we have established at this point.

``Constants'' are always positive real numbers;  we write $A \lesssim B$ when there is a constant $C$ such that $A \leq CB$, and $A \eqsim B$ when $A \lesssim B$ and $B \lesssim A$.
We denote the identity of a group by $o$, and the indicator function of a set $E$ by $\chi_{E}$.

\section{Preliminaries}

\subsection{Stratified nilpotent Lie groups}\label{ssec:strat-groups}
Let $G$ be a (real and finite dimensional) stratified nilpotent Lie group of step $k$ with Lie algebra $\mathfrak{g}$.
This means that we may write $\mathfrak{g}$ as a vector space direct sum $\mathfrak{v}_{1}  \oplus \dots \oplus \mathfrak{v}_{k}$, where $[\mathfrak{v}_1, \mathfrak{v}_{j}] = \mathfrak{v}_{j + 1}$ when $1\leq j \leq k$; here $\mathfrak{v}_{k+1} = \{0\}$.
Let $Q$ denote the homogeneous dimension $\sum_{j=1}^{k} j \dim \mathfrak{v}_{j}$ of $G$.

There is a one-parameter family of automorphic dilations $\delta_t$ on $\mathfrak{g}$, given by
\begin{align*}
\delta_t (X_1 + X_2+ \dots + X_{k}) = tX_1 + t^2X_2 + \dots + t^{k} X_{k};
\end{align*}
here each $X_j \in \mathfrak{v}_{j}$ and $t >0$.
The exponential mapping $\exp: \mathfrak{g} \to G$ is a diffeomorphism, and we identify $\mathfrak{g}$ and $G$.
The dilations extend to automorphic dilations of $G$, also denoted by $\delta_t$, by conjugation with $\exp$.
The natural bi-invariant Haar measure on $G$ is the Lebesgue measure on $\mathfrak{g}$, lifted to $G$ using $\exp$.

By \cite{HS}, the group $G$ may be equipped with a smooth subadditive homogeneous norm $\rho$, a continuous function from $G$ to $[0,\infty)$ that is smooth on $G\setminus \{o\}$
and satisfies
\begin{enumerate}
\item[(a)] $\rho(g^{-1}) =\rho(g)$;
\item[(b)] $\rho(xy) \leq \rho(x) + \rho(y)$
\item[(c)] $\rho({ \delta_t(g)}) =t\rho(g)$ for all $g\in G$ and $t>0$;
\item[(d)] $\rho(g) =0$ if and only if $g=o$,
\end{enumerate}
Abusing notation, we define $\rho(g, g') = \rho(g^{-1} g')$
for all $g, g' \in G$; this defines a metric on $G$.
We write $B(g, r)$ for the open ball with centre $g$ and radius $r$ with respect to $\rho$:
\[
B(g, r) = g B(o,r) = g \{ h \in G : \rho(h) < 1 \}.
\]
The metric space $(G,\rho)$ is \emph{geometrically doubling}; that is, there exists $N \in \mathbb{N}$ such that every metric ball $B(x,2r)$ may be covered by at most $N$ balls of radius $r$.

We remind the reader that a stratified Lie group is a space of homogenous type in the sense of Coifman and Weiss \cite{CW1, CW2}, and analysis on stratified Lie groups uses much from the theory of such spaces.
In particular, we frequently deal with \emph{molecules}, that is, functions $\psi$ that satisfy \emph{standard decay and smoothness conditions}, meaning that there is a parameter $\varepsilon \in (0,1]$, which we fix once and for all, and a constant $C$ such that
\begin{equation}\label{eq:molecule}
\begin{gathered}
\left| \psi(g) \right|  \leq C \frac{1}{(1+ \rho(g))^{Q+\varepsilon}} \\
\left| \psi(g)-\psi(g') \right|
\leq C \frac{\rho(g^{-1} g')^\varepsilon}{(1+ \rho(g)+ \rho(g'))^{Q+2\varepsilon} }
\end{gathered}
\end{equation}
for all $g, g' \in G$.
We often impose an additional \emph{cancellation} condition, namely
\begin{gather}
\int_G \psi(g) \,\mathrm{d}g =0 \label{cancel psi}.
\end{gather}
We write $\left\Vert \psi\right\Vert_{\mathsf{M}(G)}$ for the least constant $C$ such that the conditions \eqref{eq:molecule} hold, $\mathsf{M}(G)$ for the Banach space of all such functions $\psi$, and $\mathsf{M}_0(G)$ for the subspace of $\mathsf{M}(G)$ of all $\psi$ that also satisfy condition \eqref{cancel psi}.

The \emph{normalised dilate} $f_t$ of a function $f$ on $G$ by $t > 0$ is given by $f_{t} := t^{-Q}f\circ \delta_{1/t}$, and the \emph{convolution} $f\ast f'$ of measurable functions $f$ and $f'$ on $G$ is defined by
\begin{align*}
f \ast f'(g)
=\int_Gf(h)f'(h^{-1}g)\,\mathrm{d}h
=\int_Gf(gh^{-1})f'(h)\,\mathrm{d}h.
\end{align*}

Take left-invariant vector fields $\mathcal{X}_1$, \dots, $\mathcal{X}_{n}$ on $G$ that form a basis of $\mathfrak{v}_1$, and define the \emph{sub-Laplacian} $\mathcal{L} = -\sum_{j=1 }^{n} (\mathcal{X}_{j})^2 $.
Observe that each $\mathcal{X}_{j}$ is homogeneous of degree $1$ and $\mathcal{L}$ is homogeneous of degree $2$, in the sense that
\begin{align*}
&\mathcal{X}_{j} \left(  f \circ \delta_{t} \right)
= t \left(  \mathcal{X}_{j} f \right)  \circ \delta_{t} 
\qquad\text{and}\qquad
\mathcal{L} \left(  f \circ \delta_{t} \right)
= t^2 \, \left(  \mathcal{L} f \right)  \circ \delta_{t}
\end{align*}
for all $t > 0$ and all $f \in C^{2}(G)$.

Associated to the sub-Laplacian, there are various Riesz potential operators $\mathcal{L}^{-\alpha}$, where $\alpha > 0$; these are convolution operators with homogeneous kernels---see Folland \cite{F2}.
The \emph{Riesz transformation} $\mathcal{R}_{j} := \mathcal{X}_{j} \mathcal{L}^{-1/2}$ is a singular integral operator, and is bounded on $\mathsf{L}^{p}(G)$ when $1 < p < \infty$ as well as from the Folland--Stein Hardy space $\mathsf{H}^1(G)$ to $\mathsf{L}^1(G)$.
We define $\mathcal{R}_{0}$ to be the identity operator $\mathcal{I}$.

The Hardy--Littlewood maximal operator $\mathcal{M}$ on $G$ is defined using the metric balls:
\[
\mathcal{M}f(g)
:= \sup\left\{  \frac{1}{\left| B(g',r) \right| } \int_{B(g',r)} \left| f(g'') \right|  \,\mathrm{d}g'' : g \in B(g',r) \right\} .
\]
For future use, we note that the layer cake formula implies that, if $\mu$ is a radial decreasing function on $G$ (that is, $\mu(g)$ depends only on $\rho(g)$ and decreases as $\rho(g)$ increases), then
\begin{equation}\label{eq:decay-max}
\left| f \right|  \ast \mu_\varepsilon (g)
\leq  \left\Vert  \mu\right\Vert_{\mathsf{L}^1(G)} \mathcal{M}f(g)
\qquad\forall g \in G.
\end{equation}

\subsection{Functional calculus for the sub-Laplacian}\label{ssec:funct-calc}
The sub-Laplacian $\mathcal{L}$ has a spectral resolution:
\[
\mathcal{L} (f)=\int_0^{\infty}\lambda \,\mathrm{d}\mathcal{E}_{\mathcal{L} }(\lambda) f
\qquad\forall f\in \mathsf{L}^2(G),
\]
where $\mathcal{E}_{\mathcal{L}}(\lambda)$ is a projection-valued measure supported on $[0,\infty)$, the spectrum of $\mathcal{L}$.
For a bounded Borel function $\eta:[0,\infty)\to \mathbb{C}$, we define the operator $F(\mathcal{L})$ spectrally:
\begin{equation*}
\eta(\mathcal{L})f = \int_0^{\infty} \eta(\lambda)\,\mathrm{d}\mathcal{E}_{\mathcal{L}}(\lambda) f
\qquad\forall f\in \mathsf{L}^2(G).
\end{equation*}
This operator is a convolution with a Schwartz distribution on $G$.

Take a smooth function $\eta:\mathbb{R}_{+} \to \mathbb{R}$, supported in $[1/2,2]$, such that $\sum_{n\in\mathbb{Z}}\eta(2^{-n}s)=1$ for all $s\in \mathbb{R}_{+}$.
The convolution kernels $k_{\eta(\mathcal{L}_i)}$ of the operators $\eta(\mathcal{L}_i)$ on $G$ are Schwartz functions, by \cite{Hul84}.
Moreover, we may write $\eta(t\mathcal{L}_i ) =t\mathcal{L}_i \psi(t\mathcal{L}_i )$, where $\psi(t) :=t^{-1}\eta(t)$ for all $t\in \mathbb{R}_{+}$ and $\operatorname{supp}\psi\subset[1/2,2]$, and deduce that
\begin{align*}
k_{\eta(t\mathcal{L}_i )} = t\mathcal{L}_i k_{\psi(t\mathcal{L}_i )}.
\end{align*}
Integration by parts now implies that
\begin{align*}
\int_{G }k_{\eta(t\mathcal{L}_i )}(g)\,\mathrm{d}g
=\int_{G }t\mathcal{L}_i k_{\psi(t\mathcal{L}_i )}(g)\,\mathrm{d}g
=0.
\end{align*}

\subsection{The heat and Poisson kernels}
Let $p_{t}$ and $P_{t}$, where $t > 0$, be the heat and Poisson kernels associated to the sub-Laplacian operator $\mathcal{L}$, that is, the convolution kernels of the operators $\mathrm{e}^{t\mathcal{L}}$ and $\mathrm{e}^{t \sqrt{\mathcal{L}}}$ on $G$.
We write $Q_{t}$ for $t\partial_t P_{t}$, and to simplify notation later, we often write $P$ instead of $P_{1}$.
We warn the reader that $P_{t}$ and $Q_{t}$ are the normalised dilates of $P_{1}$ and $Q_{1}$ by the factor $t$, but $p_{t}$ is the normalised dilate of $p_{1}$ by a factor of $t^{1/2}$.
Let $\nabla$ denote the subgradient on $G$ and $\slashed{\nabla}$ denote the gradient $(\nabla , \partial_{t})$ on $G\times\mathbb{R}_{+}$.

\begin{lemma}\label{lem:heat-pois-estim}
The kernels $p_{t}$ and $P_{t}$ are $\mathbb{R}_{+}$-valued.
Further, $p_{t}$ and $P_{t}$ have integral $1$, while $Q_{t}$ has integral $0$
for all $t \in \mathbb{R}_{+}$.
Finally, there exists a constant $c$ such that
\begin{align*}
p_{t}(g)
&\lesssim t^{-Q/2}\exp\left( -{\rho^{2}(g)}/{ct}\right)
\\
\left| \slashed{\nabla} p_{t}(g) \right|
&\lesssim t^{-(Q+1)/2}\exp\left( -{\rho^{2}(g)}/{ct}\right)
\\
P_{t}(g)
&\eqsim \frac{ t }{ (t^2 + \rho(g)^2)^{(Q + 1)/2}}
\\
\left| \slashed{\nabla} P_{t}(g) \right|
&\lesssim \frac{ t }{ (t^2 + \rho(g)^2)^{(Q + 2)/2}}
\end{align*}
for all $g\in G$ and $t \in \mathbb{R}_{+}$.
\end{lemma}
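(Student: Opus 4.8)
The plan is to reduce every assertion to the case $t=1$ using the parabolic scaling of $\mathcal{L}$, and to obtain all the properties of the Poisson kernel from those of the heat kernel by Bochner subordination. The one ingredient I would take from the literature is the classical two-sided Gaussian bound for the heat kernel on a stratified group, together with the analogous bound for its horizontal derivatives; these are standard consequences of the hypoellipticity of $\mathcal{L}$ and its homogeneity. Everything else is bookkeeping with the scaling relation $p_{t}(g) = t^{-Q/2}\,p_{1}(\delta_{t^{-1/2}}g)$, which is precisely the statement that $p_{t}$ is the normalised dilate of $p_{1}$ by $t^{1/2}$, and with the subordination formula.

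First I would dispose of positivity and the integrals. The semigroup $\{\mathrm{e}^{-t\mathcal{L}}\}$ is a symmetric, positivity-preserving contraction semigroup, so $p_{t}\ge 0$; moreover $\mathcal{L}1=0$ forces $\mathrm{e}^{-t\mathcal{L}}1=1$, and by left-invariance this reads $\int_{G}p_{t}=1$. For the Poisson kernel I would use the subordination identity
\[
\mathrm{e}^{-t\sqrt{\mathcal{L}}}
= \frac{t}{2\sqrt{\pi}}\int_{0}^{\infty} s^{-3/2}\,\mathrm{e}^{-t^{2}/4s}\,\mathrm{e}^{-s\mathcal{L}}\,\mathrm{d}s ,
\]
so that $P_{t}(g)=\frac{t}{2\sqrt{\pi}}\int_{0}^{\infty}s^{-3/2}\mathrm{e}^{-t^{2}/4s}\,p_{s}(g)\,\mathrm{d}s$. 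Positivity of $P_{t}$ is then immediate from that of $p_{s}$, and integrating in $g$ while using $\int_{G}p_{s}=1$ shows the subordinating measure has total mass $1$, whence $\int_{G}P_{t}=1$. Since this holds for every $t$, differentiating in $t$ (justified by the decay below) gives $\int_{G}Q_{t}=t\,\partial_{t}\int_{G}P_{t}=0$.

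Next come the Gaussian bounds. For $t=1$ the estimates $p_{1}(g)\lesssim\exp(-\rho(g)^{2}/c)$ and $|\nabla p_{1}(g)|\lesssim\exp(-\rho(g)^{2}/c)$ are the classical heat kernel bounds; the general $t$ versions follow by feeding the scaling relation and $\rho(\delta_{t^{-1/2}}g)=t^{-1/2}\rho(g)$ into these, the prefactor $t^{-Q/2}$ coming from the normalised dilation and one extra factor $t^{-1/2}$ per horizontal derivative from the homogeneity of degree $1$ of each $\mathcal{X}_{j}$. The time derivative in $\slashed{\nabla}$ I would control through the heat equation $\partial_{t}p_{t}=-\mathcal{L}p_{t}$ and the homogeneity of degree $2$ of $\mathcal{L}$, which converts the $t=1$ bound for $\mathcal{L}p_{1}$ into the claimed decay.

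Finally, the Poisson estimates are extracted from the subordination integral. Inserting the two-sided heat bound $p_{s}(g)\eqsim s^{-Q/2}\exp(-\rho(g)^{2}/cs)$ and combining exponentials yields, up to constants, $\int_{0}^{\infty}s^{-(Q+3)/2}\exp(-a/s)\,\mathrm{d}s$ with $a\eqsim t^{2}+\rho(g)^{2}$; the substitution $u=a/s$ identifies this with a multiple of $a^{-(Q+1)/2}$, giving $P_{t}(g)\eqsim t\,(t^{2}+\rho(g)^{2})^{-(Q+1)/2}$, the lower bound using the lower Gaussian bound for $p_{s}$. For the horizontal gradient I would differentiate under the integral and replace $p_{s}$ by $\nabla p_{s}$; the extra factor $s^{-1/2}$ raises the exponent to $a^{-(Q+2)/2}$, producing $|\nabla P_{t}(g)|\lesssim t\,(t^{2}+\rho(g)^{2})^{-(Q+2)/2}$. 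The main obstacle is the time derivative and the sharp lower bound for $P_{t}$: a naive termwise differentiation of the subordination kernel is too lossy because of cancellation, so one must instead use the analogous integral representation of $\partial_{t}P_{t}$ (equivalently, of the mean-zero kernel $Q_{t}$), and verifying both this and the matching lower Gaussian bound is the genuinely delicate part of the lemma.
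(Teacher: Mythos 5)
Your route is the same as the paper's: the published proof simply cites \cite[Theorem IV.4.2]{VSC92} for the two-sided Gaussian bounds on $p_t$ and its gradient (noting, as you do, that the lower bound comes with a different constant $c$) and then derives the Poisson estimates from the subordination formula, deferring details to \cite{CCLLO}. Your positivity and mass computations, the parabolic scaling, and the substitution $u=a/s$ giving $\int_0^\infty s^{-(Q+3)/2}\mathrm{e}^{-a/s}\,\mathrm{d}s \eqsim a^{-(Q+1)/2}$ are exactly the details being left to the reader, and the two-sided bound for $P_t$ (including the lower bound) follows from them with no further delicacy.

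On the part you flag as the genuine obstacle, two corrections. First, the time derivative causes no cancellation problem: differentiating the subordination kernel in $t$ produces the factor $s^{-3/2}\bigl(1-t^2/2s\bigr)\mathrm{e}^{-t^2/4s}$, and since $\bigl|1-t^2/2s\bigr|\mathrm{e}^{-t^2/4s}\lesssim \mathrm{e}^{-t^2/8s}$ the same computation goes through and yields $|\partial_t P_t(g)|\lesssim (t^2+\rho(g)^2)^{-(Q+1)/2}$. Second --- and this is why you could not push further --- that exponent is optimal: the bound $|\partial_t P_t(g)|\lesssim t\,(t^2+\rho(g)^2)^{-(Q+2)/2}$ asserted in the lemma for the full gradient $\slashed{\nabla}$ fails for the $\partial_t$ component already for the classical Poisson kernel on $\mathbb{R}^n$ (an abelian stratified group), where $\partial_tP_t(x)\eqsim |x|^{-(n+1)}$ when $|x|\gg t$, whereas the asserted bound would force decay like $t|x|^{-(n+2)}$. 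Only the horizontal gradient gains the extra factor, via the additional $s^{-1/2}$ you describe. So what you can and should prove is $|\slashed{\nabla}P_t(g)|\lesssim (t^2+\rho(g)^2)^{-(Q+1)/2}$; this weaker estimate is all that is used in the sequel, since it already makes $t\slashed{\nabla}P_t$ a molecule in the sense of \eqref{eq:molecule} with $\varepsilon=1$.
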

\begin{proof}
For the heat kernel estimates, see \cite[Theorem IV.4.2]{VSC92}.
Note that there is a version of the first estimate with the opposite inequality and a different constant $c$.

The estimates for $P_{t}$ and $Q_{t}$ follow from the subordination formula
\begin{equation*}
\mathrm{e}^{-t\sqrt{\mathcal{L}}}
=\frac{1}{2\sqrt{\pi}}\int_0^\infty
\frac{t\mathrm{e}^{-{t^2}/{4v}}}{\sqrt{v}}\mathrm{e}^{-v \mathcal{L}}\frac{dv}{v}.
\end{equation*}
For the case of the Heisenberg group, much of this is worked out in detail in \cite{CCLLO}.
\end{proof}

This lemma implies that the heat kernel $p_{1}$ and the Poisson kernel $P_{1}$ (and their derivatives) both satisfy the standard decay and smoothness conditions \eqref{eq:molecule}; the derivatives also satisfy the cancellation condition \eqref{cancel psi}.

Lemma \ref{lem:heat-pois-estim} also implies the following standard corollary, whose proof we omit.

\begin{corollary}\label{cor:Lebesgue}
Suppose that $f \in \mathsf{L}^{p}(G)$, where $1 \leq p \leq \infty$.
Then $\left\Vert  f \ast P_{t} \right\Vert_{\mathsf{L}^{p}(G)}$ and $\left\Vert   f \ast t\slashed{\nabla} P_{t} \right\Vert_{\mathsf{L}^{p}(G)}$ are uniformly bounded as $t$ runs over $\mathbb{R}_{+}$.
Further,
\[
\lim_{t \to 0} f \ast P_{t} = f ;
\]
the convergence is both pointwise almost everywhere, and in the $\mathsf{L}^{p}(G)$ norm if $1 \leq p < \infty$ and in the weak-star topology if $p = \infty$.
Finally,
\begin{align*}
\lim_{t \to 0} f \ast t\slashed{\nabla} P_{t}=0;
\end{align*}
the convergence is both pointwise almost everywhere, and in the strong operator topology if $f \in \mathsf{L}^1(G)$, in the $\mathsf{L}^{p}(G)$ norm if $1 < p < \infty$ and in the weak-star topology if $p = \infty$.
\end{corollary}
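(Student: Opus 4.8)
The plan is to treat $\{P_{t}\}$ and $\{t\slashed{\nabla}P_{t}\}$ as, respectively, an approximation of the identity and a \emph{mean-zero} approximation of zero, and to read off every assertion from the pointwise estimates of Lemma \ref{lem:heat-pois-estim} together with the maximal bound \eqref{eq:decay-max}. The estimate $P_{t}(g)\eqsim t(t^{2}+\rho(g)^{2})^{-(Q+1)/2}$ exhibits $P_{t}$ as dominated by the normalised dilate of the radial decreasing $\mathsf{L}^{1}(G)$ function $g\mapsto(1+\rho(g)^{2})^{-(Q+1)/2}$, and similarly $|t\slashed{\nabla}P_{t}(g)|\lesssim t^{2}(t^{2}+\rho(g)^{2})^{-(Q+2)/2}$ is dominated by the dilate of $g\mapsto(1+\rho(g)^{2})^{-(Q+2)/2}$. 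Feeding these into \eqref{eq:decay-max} gives the pointwise maximal bounds $\sup_{t>0}|f\ast P_{t}|\lesssim\mathcal{M}f$ and $\sup_{t>0}|f\ast t\slashed{\nabla}P_{t}|\lesssim\mathcal{M}f$. Since $\Vert P_{t}\Vert_{\mathsf{L}^{1}(G)}=1$ and $\Vert t\slashed{\nabla}P_{t}\Vert_{\mathsf{L}^{1}(G)}$ is finite and independent of $t$ (normalised dilation preserves the $\mathsf{L}^{1}$ norm), Young's inequality yields the uniform $\mathsf{L}^{p}$ bounds for every $p\in[1,\infty]$.

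For the convergence $f\ast P_{t}\to f$ I would run the standard approximate-identity argument. For $f\in C_{c}(G)$ the convergence is uniform, because $P_{t}$ has integral $1$ and concentrates at $o$; combined with the uniform bound and the density of $C_{c}(G)$ this gives $\mathsf{L}^{p}$-norm convergence for $1\leq p<\infty$. Pointwise almost everywhere convergence then follows by controlling $\limsup_{t\to 0}|f\ast P_{t}-f|$ by $\mathcal{M}(f-g)+|f-g|$ for $g\in C_{c}(G)$ and invoking the weak-type $(p,p)$ bound for $\mathcal{M}$ as $\Vert f-g\Vert_{\mathsf{L}^{p}}\to 0$ (the case $p=\infty$ reducing to the locally integrable theory). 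For the weak-star statement when $p=\infty$ I would argue by duality: the Poisson kernel is symmetric, as $\mathrm{e}^{-t\sqrt{\mathcal{L}}}$ is self-adjoint, so for $h\in\mathsf{L}^{1}(G)$ one has $\int_{G}(f\ast P_{t})\,h=\int_{G}f\,(P_{t}\ast h)$, and $P_{t}\ast h\to h$ in $\mathsf{L}^{1}(G)$ by the case just proved.

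The second family is handled in the same way, the essential extra observation being that it has mean zero. By the homogeneity of the $\mathcal{X}_{j}$ one has $t\nabla P_{t}=(\nabla P_{1})_{t}$, while $t\partial_{t}P_{t}=Q_{t}=(Q_{1})_{t}$, so $t\slashed{\nabla}P_{t}=\Phi_{t}$ for the vector-valued function $\Phi:=(\nabla P_{1},Q_{1})$, whose components lie in $\mathsf{M}_{0}(G)$: they satisfy the decay and smoothness conditions, and have integral zero (for $\nabla P_{1}$ by integration by parts, for $Q_{1}$ by Lemma \ref{lem:heat-pois-estim}). After the substitution $h=\delta_{t}(u)$ the cancellation gives
\[
f\ast\Phi_{t}(g)=\int_{G}\bigl[f(g\,\delta_{t}(u)^{-1})-f(g)\bigr]\,\Phi(u)\,\mathrm{d}u,
\]
whence $\Vert f\ast\Phi_{t}\Vert_{\mathsf{L}^{p}(G)}\leq\int_{G}|\Phi(u)|\,\Vert f(\cdot\,\delta_{t}(u)^{-1})-f\Vert_{\mathsf{L}^{p}(G)}\,\mathrm{d}u\to 0$ as $t\to 0$, by continuity of right translation on $\mathsf{L}^{p}(G)$ for $1\leq p<\infty$ and dominated convergence. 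This yields the convergence to $0$ for $f\in\mathsf{L}^{1}(G)$ and for $1<p<\infty$; the weak-star statement for $p=\infty$ follows by duality, since the transpose kernel $u\mapsto\Phi(u^{-1})$ also has mean zero and the same decay. The pointwise almost everywhere statement follows from the maximal bound exactly as for $P_{t}$, using that $g\ast\Phi_{t}\to 0$ everywhere for $g\in C_{c}(G)$.

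None of these steps is deep; the points needing care are the endpoints. The main subtlety I anticipate is at $p=1$ and $p=\infty$ for the mean-zero family: a mean-zero approximation need not tend to zero in operator norm, so for $f\in\mathsf{L}^{1}(G)$ the convergence must be phrased as a strong operator limit and obtained from the translation-continuity estimate above rather than from a uniform kernel bound, while at $p=\infty$ one is forced into the weak-star formulation and must check that the symmetry of $P_{t}$ and the mean-zero property of the transpose of $\Phi$ make the duality arguments legitimate.
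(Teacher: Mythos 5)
The paper omits the proof of this corollary as ``standard'', and your argument is a correct and complete rendition of exactly the intended route: everything is deduced from the kernel bounds of Lemma \ref{lem:heat-pois-estim}, the dilation identities $t\nabla P_t=(\nabla P_1)_t$ and $t\partial_tP_t=(Q_1)_t$, the maximal-function domination \eqref{eq:decay-max}, and the mean-zero property of $\nabla P_1$ and $Q_1$. The only point to tidy is the duality step at $p=\infty$: on a noncommutative group the correct identity is $\int_G (f\ast P_t)\,h = \int_G f\,\bigl(h\ast \check P_t\bigr)$ with $\check P_t(g)=P_t(g^{-1})$, which equals $\int_G f\,(h\ast P_t)$ by the symmetry $P_t(g^{-1})=P_t(g)$ that you invoke --- not $\int_G f\,(P_t\ast h)$ --- but this slip of convolution order does not affect the argument.
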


\subsection{Systems of pseudodyadic cubes}
\label{sec:pseudodyadiccubes}

We use the Hyt\"onen--Kairema \cite{HK} families of ``dyadic cubes" in geometrically doubling metric spaces.
We state a version of \cite[Theorem 2.2]{HK} that is simpler, in that we work on well-behaved metric spaces rather than general pseudometric spaces.
The Hyt\"onen--Kairema construction builds on seminal work of Christ \cite{Chr} and of Sawyer and Wheeden \cite{SW}.

\begin{theorem}[\cite{HK}]
\label{thm:pseudodyadiccubes}
Let $(G,\rho)$ be a metric stratified group  and $c_0$, $C_0$ and $\kappa$ constants such that $0 < c_0 \leq C_0 < \infty$ and $12 C_0\kappa \leq c_0$. Then for all $k \in \mathbb{Z}$, there exist families $\mathscr{Q}^k(G)$ of \emph{pseudodyadic cubes} $Q$ with \emph{centres} $z(Q)$, such that:
\begin{enumerate}
\item[(a)] $G$ is the disjoint union of all $Q \in \mathscr{Q}^k(G)$, for each $k\in\mathbb{Z}$;
\item[(b)] $B(z(Q),c_0\kappa^k/3)\subseteq Q \subseteq B(z(Q),2C_0\kappa^k)$ for all $Q \in \mathscr{Q}^k(G)$;
\item[(c)] if $Q \in \mathscr{Q}^k(G)$ and $Q' \in \mathscr{Q}^{k'}(G)$ where $k\leq k'$, then either $Q \cap Q'=\emptyset$ or $Q \subseteq Q'$; in the second case, $B(z(Q), 2C_0\kappa^k) \subseteq B(z(Q'),2C_0\kappa^{k'})$;
\end{enumerate}
\end{theorem}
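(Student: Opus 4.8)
The plan is to follow the construction of Christ and of Hyt\"onen and Kairema, building the cubes from a nested family of nets. \emph{First}, for each $k\in\mathbb{Z}$ I would fix a maximal $\kappa^{k}$-separated subset $\{z_{\alpha}^{k}\}_{\alpha}$ of $G$: maximality forces the balls $B(z_{\alpha}^{k},\kappa^{k})$ to cover $G$, while separation makes the balls $B(z_{\alpha}^{k},\kappa^{k}/2)$ pairwise disjoint. I would arrange these nets to be \emph{nested}, $\{z_{\alpha}^{k}\}\subseteq\{z_{\beta}^{k+1}\}$, by choosing each finer net to extend the coarser one, which is possible since a maximal separated set at a finer scale may always be grown from a given separated set at the coarser scale. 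The geometric doubling of $(G,\rho)$ guarantees that each net is locally finite, so all the unions formed below are manageable and the tree structure constructed next is well founded.

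\emph{Next}, I would impose a tree structure on the centres. For each finer-scale centre $z_{\beta}^{k+1}$ I would assign as its \emph{parent} a nearest coarser-scale centre $z_{\alpha}^{k}$, breaking ties by a fixed well-ordering of the index set; composing parent maps then yields, for every centre and every coarser level, a unique ancestor. I would define the cube $Q_{\alpha}^{k}$ to be, up to a boundary-resolving passage to the closure and then the interior, the union of the small balls $B(z_{\beta}^{\ell},c_{0}\kappa^{\ell}/3)$ over all descendants $z_{\beta}^{\ell}$ (with $\ell$ finer than $k$) of $z_{\alpha}^{k}$. Because the nets are nested, $z_{\alpha}^{k}$ is its own descendant at every finer level, so its own inner ball sits inside $Q_{\alpha}^{k}$.

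\emph{Finally}, I would verify (a)--(c). The disjoint partition (a) at each fixed level is immediate from the tree: every point of $G$ lies in a unique finest cube and is thereby assigned, through its chain of ancestors, to exactly one cube of level $k$. For the sandwiching (b), the inner containment comes from the centre's own ball, while the outer containment follows by summing the geometric series of successive parent displacements: a descendant of $z_{\alpha}^{k}$ lies within $\sum_{j>k}C_{0}\kappa^{j}\leq C_{0}\kappa^{k}/(1-\kappa)$ of $z_{\alpha}^{k}$, which the hypothesis $12C_{0}\kappa\leq c_{0}$ keeps comfortably below $2C_{0}\kappa^{k}$. The nesting (c), together with its quantitative ball version, again reduces to this same geometric estimate. \emph{The main obstacle}, and the point where the constraint $12C_{0}\kappa\leq c_{0}$ is really used, is to make the three requirements compatible at once: the cubes must tile $G$ exactly, nest perfectly across scales, and yet stay trapped between two concentric balls of comparable radius. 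The tension is that exact tiling forces delicate choices along the common boundaries of sibling cubes, whereas the sandwiching demands that no descendant wander too far from its level-$k$ ancestor nor intrude on the inner ball of a sibling; the smallness of $\kappa$ relative to $c_{0}/C_{0}$ is precisely what prevents the accumulated displacements from violating either the separation that protects the inner ball or the covering that fills the outer ball.
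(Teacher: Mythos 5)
The paper does not prove this theorem: it is quoted, in a simplified form, directly from Hyt\"onen and Kairema \cite{HK} (whose construction builds on Christ \cite{Chr}), so there is no internal argument to compare yours against. Your sketch follows the standard Christ--Hyt\"onen--Kairema route --- nested maximal separated nets, a parent map with ties broken by a fixed ordering, cubes assembled from descendant balls, and geometric-series control of the accumulated parent displacements --- and the outline of (b) and the qualitative part of (c) is essentially right, modulo constant bookkeeping (the displacement from a centre to its parent is bounded by the covering radius of the coarser net, and $12C_0\kappa\leq c_0$ is exactly what keeps the summed displacements small enough both to protect the inner ball of a sibling and to stay inside the outer ball). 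One caution: your convention (descendants live at indices $j>k$, with $\kappa<1$) is internally consistent, but it is the opposite of the literal statement of (c) in the paper, which as written would require $\kappa>1$; this is a defect of the quoted statement rather than of your argument, but you should fix one or the other.

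The one genuine gap is in (a). Passing to ``the closure and then the interior'' of the union of descendant balls does not produce a partition of $G$: the boundary points shared by adjacent cubes are discarded when you take interiors, so the resulting sets cover $G$ only up to a boundary set. That is precisely the defect of Christ's original construction (a partition modulo a null set) that Hyt\"onen and Kairema set out to repair. Their proof needs an additional selection step: having built closed sets $\bar Q$ and open sets $\tilde Q$ with $\tilde Q\subseteq\bar Q$, pairwise disjoint open parts and covering closed parts, one must \emph{choose} genuine sets $Q$ with $\tilde Q\subseteq Q\subseteq\bar Q$ that tile $G$ exactly at every level and remain coherently nested across all levels simultaneously; this is done by a deterministic assignment of each ambiguous point to exactly one cube per generation, consistently along its entire chain of ancestors, and Hyt\"onen--Kairema isolate it as a lemma on upgrading such ``almost-partitions'' to true partitions. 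You correctly identify exact tiling as the main obstacle, but the mechanism you propose does not resolve it; without that step you have proved Christ's theorem, not the statement as given.
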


The family of pseudodyadic cubes $Q$ in $\mathscr{Q}^k(G)$, where $k \in \mathbb{Z}$, of  Theorem~\ref{thm:pseudodyadiccubes} will be called a \emph{Hyt\"onen--Kairema set of cubes} on $G$.
We write $\mathscr{Q}(G)$ for the union of all $\mathscr{Q}^k(G)$.
Given a cube $Q \in \mathscr{Q}^k(G)$, we denote the quantity $\kappa^k$ by $\ell(Q)$, by analogy with the side-length of a Euclidean cube.

\subsection{Products of stratified groups}

We equip products of stratified groups $G_1$ and $G_2$ with a product structure: the basic geometric objects are rectangles, which are products of balls, and pseudodyadic rectangles, which are products of pseudodyadic cubes.
We write $\mathscr{P}^{\boldsymbol{j}}(\boldsymbol{G})$ for the collection of all pseudodyadic rectangles that are products of cubes in $\mathscr{Q}^{j_1}(G_1)$ and in $\mathscr{Q}^{j_2}(G_2)$; $\mathscr{P}(\boldsymbol{G})$ for the collection of all pseudodyadic rectangles, and $\mathscr{R}(\boldsymbol{G})$ for the collection of all rectangles.
We let $\boldsymbol{\ell}: \mathscr{P}(\boldsymbol{G}) \to \boldsymbol{T}$ be the function such that $\ell_i(Q_1 \times Q_2) = \ell(Q_i)$, the ``side-length'' of $Q_i$.

We carry forward the notation from Section \ref{ssec:strat-groups}, modified by adding a subscript $i$ or superscript $[i]$ to clarify that we are dealing with $G_i$.
To shorten the formulae, we often use bold face type to indicate a product object: thus we write $\boldsymbol{G}$, $\boldsymbol{g}$, $\boldsymbol{r}$ and $\boldsymbol{t}$ in place of $G_1 \times G_2$, $(g_1,g_2)$, $(r_1,r_2)$ and $(t_1,t_2)$.
For example, $B_i (g_i, r_i)$ denotes the open ball on $G_i$ with centre $g_i$ and radius $r_i$, with respect to the homogeneous norm $\rho_i $, and a typical rectangle $R(\boldsymbol{g},\boldsymbol{r})$ is then a product $B_1(g_1, r_1) \times B_2(g_2, r_2)$.
We also write $\boldsymbol{t} \,\mathrm{d}\boldsymbol{t}$ in place of $t_1 t_2 \,\mathrm{d}t_1 \,\mathrm{d}t_2$, and $\boldsymbol{T}$ for the product parameter space $\mathbb{R}_+ \times \mathbb{R}_+$.

The element of Haar measure on $\boldsymbol{G}$ is denoted $\mathrm{d}\boldsymbol{g}$, but may be written as $\mathrm{d}g_1\,\mathrm{d}g_2$ for calculations.
The convolution $f\ast f'$ of functions $f$ and $f'$ on $\boldsymbol{G}$ is defined by
\begin{align*}
(f\ast f')(\boldsymbol{g})
:=\int_{\boldsymbol{G}}f(\boldsymbol{h})f'(\boldsymbol{h}^{-1}\boldsymbol{g}) \,\mathrm{d}\boldsymbol{h}.
\end{align*}

We define the strong maximal operator $\mathcal{M}_{S}$ by
\begin{align*}
\mathcal{M}_{S}(f)(\boldsymbol{g})
:=\sup\left\{  \frac{1}{\left| R\right| }\int_{R}\left| f(\boldsymbol{h}) \right| \,\mathrm{d}\boldsymbol{h} :
R \ni \boldsymbol{g}, R \in \mathscr{R}(\boldsymbol{G}) \right\} .
\end{align*}
It is a straightforward exercise to show that $\mathcal{M}_{S}$ is dominated by the iterated Hardy--Littlewood maximal operators in the factors:
\[
\mathcal{M}_{S}{f}
\leq \mathcal{M}_1 \mathcal{M}_2 (f)
\qquad\text{and}\qquad
\mathcal{M}_{S}{f}
\leq \mathcal{M}_2 \mathcal{M}_1(f)
\qquad\forall f \in \mathsf{L}^1_{\mathrm{loc}}(\boldsymbol{G}).
\]
When $1 < p \leq \infty$, the operators $\mathcal{M}_1$ and $ \mathcal{M}_2$ in the factors are $\mathsf{L}^{p}$-bounded, so the iterated maximal operators and the strong maximal operator are also $\mathsf{L}^{p}$-bounded.

Given functions $\psi^{[1]}$ on $G_1$ and $\psi^{[2]}$ on $G_2$, we often deal with the product of their normalised dilates on $G_1 \times G_2$, and we abbreviate this to $\psi_{\boldsymbol{t}}$:
\[
\psi_{\boldsymbol{t}} := \psi^{[1]}_{t_1} \otimes \psi^{[2]}_{t_2} \,.
\]
If $\psi^{[1]} \in \mathsf{M}(G_1)$ and $\psi^{[2]} \in \mathsf{M}(G_2)$, then
\begin{equation*}
\left|  f \ast \psi_{\boldsymbol{t}}(\boldsymbol{g})  \right|  \lesssim \mathcal{M}_{S}(f)(\boldsymbol{g})
\qquad\forall \boldsymbol{g} \in \boldsymbol{G} \quad\forall f \in \mathsf{L}^1(\boldsymbol{G}),
\end{equation*}
much as argued to prove \eqref{eq:decay-max}, but with ``biradial'' in place of ``radial''.

Given an open subset $U$ of $\boldsymbol{G}$ with finite measure $\left| U \right| $, we define the enlargement $\widetilde{U}$ of $U$ using the strong maximal operator $\mathcal{M}_{S}$:
\begin{align*}
\widetilde{U}
:= \Bigl\{ \boldsymbol{g} \in \boldsymbol{G} : \mathcal{M}_{S} \chi_{U}(\boldsymbol{g}) > \frac{1}{4} \Bigr\} .
\end{align*}
We write $\mathscr{M}(U)$ for the family of maximal pseudodyadic rectangles contained in $U$.

We let $P_{\boldsymbol{t}}:=P^{[1]}_{t_1} \otimes P^{[2]}_{t_2}$; when $t_1 = 0$ or $t_2=0$, we interpret this as a distribution supported in $G_2$ or in $G_1$ in the obvious way.
We write $Q^{[i]}_{t_i}$ for the convolution kernel of the operator $t_i \partial_{t_i} \mathrm{e}^{-t_i \sqrt{\mathcal{L}_i}}$; then $Q^{[i]}_{t_i} = t_i \partial_{t_i} P^{[i]}_{t_i}$.
By arguing as in Corollary \ref{cor:Lebesgue}, it is easy to see that for any measurable subset $V$ of $\boldsymbol{G}$,
\begin{equation}\label{eq:chi-star-Q}
\lim_{t_1\to 0} \chi_V \ast (Q^{[1]}_{t_1} \otimes P^{[2]}_{t_2})(\boldsymbol{g}) = 0
\end{equation}
for almost all $\boldsymbol{g}$ in $\boldsymbol{G}$ and in the weak-star topology of $\mathsf{L}^\infty(\boldsymbol{G})$.

The double Riesz transforms $\mathcal{R}^{[1]}_{j_1} \otimes \mathcal{R}^{[2]}_{j_2}f$, where $0 \leq j_i \leq d_i$, of a suitable function $f$ on $\boldsymbol{G}$ are defined in the obvious way:  when $j_1$ and $j_2$ are nonzero,
\begin{equation}\label{eq:def-double-Riesz}
\mathcal{R}^{[1]}_{j_1} \otimes \mathcal{R}^{[2]}_{j_2}f
:= X^{[1]}_{j_1} \mathcal{L}_1^{-1/2} X^{[2]}_{j_2} \mathcal{L}_2^{-1/2}f,
\end{equation}
and if $j_i = 0$ we replace $X^{[i]}_{j_i} \mathcal{L}_i^{-1/2}$ by the identity operator $\mathcal{I}_i$.

\section{The known product Hardy spaces}

\subsection{The atomic Hardy space}

Fix a constant $C$ and Hyt\"onen--Kairema sets of pseudo\-dyadic cubes in $G_1$ and $G_2$.
A pseudodyadic rectangle $R$ is a product $Q_1\times Q_2$ of pseudo\-dyadic cubes in the factors $G_1$ and $G_2$.

An integrable function $a_R$ is said to be a \emph{particle} associated to the pseudodyadic rectangle $R$ if the following support and product cancellation conditions hold:
\begin{equation}\label{eq:product-atom-support}
\operatorname{supp} a_R\subseteq CR
\end{equation}
and
\begin{equation}\label{eq:product-atom-cancel}
\int_{G_1}a_R(g_1,\cdot) \,\mathrm{d}g_1=0
\qquad\text{and}\qquad
\int_{G_2}a_R(\cdot,g_2) \,\mathrm{d}g_2=0
\end{equation}
(almost everywhere).

A function $a$ on $\boldsymbol{G} $ is said to be a \emph{product atom} associated to an open subset $U$ of $\boldsymbol{G}$ of finite measure if $a$ satisfies the following support and size conditions:
\[
\operatorname{supp} a\subset\widetilde{U}
\]
\[
\left\Vert  a \right\Vert_{\mathsf{L}^2( \boldsymbol{G} )}\leq \left| \widetilde{U} \right| ^{-1/2} ,
\]
and we may decompose $a$ as a sum $\sum_{R\in \mathscr{M}(U)}a_R $ of particles $a_R$ associated to the pseudodyadic rectangles $R \in \mathscr{M}(U)$ in such a way that
\[
\biggl(\sum_{R\in \mathscr{M}(U)}\left\Vert  a_R \right\Vert_{\mathsf{L}^2(\boldsymbol{G} )}^2\biggr)^{1/2}
\leq \left| {U} \right|  ^{-1/2}.
\]

\begin{definition}\label{def:atom-Hardy}
We say that $f \in \mathsf{L}^1(\boldsymbol{G})$ belongs to the atomic Hardy space $\mathsf{H}^1_{\mathrm{atom}} (\boldsymbol{G})$ if and only if it is possible to represent $f$ as a sum
\[
f = \sum_{n\in\mathbb{N}} \lambda_n a_n,
\]
where $a_n$ is an atom and $\lambda_n \in \mathbb{R}_{+}$ for all $n$, and $\sum_{n\in\mathbb{N}} \lambda_n < \infty$.
We define the norm $\Vert f \Vert_{\mathsf{H}^1_{\mathrm{atom}}(\boldsymbol{G})}$ to be the infimum of the sums $\sum_{n\in\mathbb{N}} \lambda_n$ over all such representations of $f$.
\end{definition}

It is often more convenient to impose a stronger requirement on particles, namely, that $a_R = \mathcal{L}_1^{N_1} \mathcal{L}_1^{N_2}b_R$ for some $\mathsf{L}^2(\boldsymbol{G})$ function $b_R$ in the domain of $\mathcal{L}_1^{N_1} \mathcal{L}_1^{N_2}$ and for large integers $N_1$ and $N_2$; this means that $a_R$ has many vanishing moments, which may make calculations easier.
We may show that this stronger requirement on particles gives the same atomic Hardy space, using telescopic series arguments to make moments vanish.

\subsection{Square function and area function Hardy spaces}\label{ssec:sq-fn-area-fn-Hardy}

For $\boldsymbol{g}\in \boldsymbol{G}$ and $\beta\in [0,\infty)$, we write $\Gamma^{\beta}(\boldsymbol{g})$ for the product cone $\Gamma_1^{\beta}(g_1)\times \Gamma_2^{\beta}(g_2)$, where
\[
\Gamma_i^{\beta}(g_i)
:=\{(h_i ,t_i )\in G_i \times \mathbb{R}_{+}:\rho_i (g_i ,h_i ) \leq \beta t_i \}.
\]
We work on the domain $G_1\times G_2 \times \mathbb{R}_{+} \times \mathbb{R}_{+}$.

Take functions $\psi^{[i]}$ on $G_i$ that satisfy the standard decay, smoothness and cancellation conditions \eqref{eq:molecule} and \eqref{cancel psi}.
Recall that $\psi_{\boldsymbol{t}}$ denotes the product function $\psi^{[1]}_{t_1} \otimes \psi^{[2]}_{t_2}$.

\begin{definition}\label{def:sq-fn}
For $\psi^{[i]}$ as above and $\beta > 0$, we define $\mathcal{S}_{\psi,\beta}(f)(\boldsymbol{g})$ to be
\[
\left( \iint_{\Gamma^{\beta}(\boldsymbol{g})}
\frac{\left| (f \ast \psi_{\boldsymbol{t}}(\boldsymbol{h}) \right| ^{2}}{\left| R(\boldsymbol{o},\beta \boldsymbol{t}) \right|  }
\,\mathrm{d}\boldsymbol{h} \,\frac{\mathrm{d}\boldsymbol{g}}{\boldsymbol{t}} \right) ^{1/2}
\]
for all $\boldsymbol{g} \in \boldsymbol{G}$ and $f \in \mathsf{L}^1(\boldsymbol{G})$.
We also define
\[
\mathcal{S}_{\psi,0}(f)(\boldsymbol{g})
:=\left( \int_{\boldsymbol{T}} \left| f \ast \psi_{\boldsymbol{t}}(\boldsymbol{g}) \right| ^{2} \,\frac{\mathrm{d}\boldsymbol{g}}{\boldsymbol{t}}\right) ^{1/2}
\]
for all $\boldsymbol{g} \in \boldsymbol{G}$ and $f \in \mathsf{L}^1(\boldsymbol{G})$.
The Hardy space $\mathsf{H}^1_{\mathrm{sq},\psi,\beta}(\boldsymbol{G})$ is defined to be the space
\[
\{f\in \mathsf{L}^{1}(\boldsymbol{G}): \left\Vert  \mathcal{S}_{\psi,\beta}(f) \right\Vert_{\mathsf{L}^{1}(\boldsymbol{G})} <\infty\},
\]
equipped with the norm
\[
\Vert f \Vert_{\mathsf{H}^1_{\mathrm{sq},\psi,\beta}(\boldsymbol{G})} :=\left\Vert  \mathcal{S}_{\psi,\beta}(f) \right\Vert_{\mathsf{L}^{1}(\boldsymbol{G})}.
\]
\end{definition}

Note that $\mathcal{S}_{\psi,\beta}(f)$ tends to $\mathcal{S}_{\psi,0}(f)$ as $\beta \to 0$, at least pointwise.
There are also discrete versions of this definition, where the integrals over $\mathbb{R}_{+}$ are replaced by sums over powers of $2$ (or some other base).
We usually call $\mathcal{S}_{\psi,\beta}(f)$ an area function when $\beta > 0$ and a square function when $\beta = 0$, but it is more efficient to treat these together.

As mentioned earlier, much is known about Hardy spaces defined as above, and we summarise some of the main results.
From \cite{HLL}, the space $\mathsf{H}^1_{\mathrm{sq},\psi,0}(\boldsymbol{G})$ is independent of the choice of the functions $\psi^{[i]}$, provided that they satisfy the decay, smoothness and cancellation conditions \eqref{eq:molecule} and \eqref{cancel psi}; discrete square functions and area operators $\mathcal{S}_{\psi,1}$ also characterise the same space, which we write simply as $\mathsf{H}^1(\boldsymbol{G})$.
The key technique to prove these equivalences is a Plancherel--P\'olya inequality.
From \cite{HLPW} and \cite{CDLWY}, we see also that $\mathsf{H}^1(\boldsymbol{G})$ may be characterised using wavelet and atomic decompositions; more precisely, $\mathsf{H}^1_{\mathrm{atom}}(\boldsymbol{G}) =\mathsf{H}^1(\boldsymbol{G})$.
Further, the double Riesz transformations $\mathcal{R}^{[1]}_{j_1}\otimes \mathcal{R}^{[1]}_{j_1}$ (see Definition \ref{eq:def-double-Riesz}) and similar singular integral operators are all bounded from $\mathsf{H}^1(\boldsymbol{G})$ to $\mathsf{L}^{1}(\boldsymbol{G})$.
Finally, from \cite{HLL}, the dual of $\mathsf{H}^1(\boldsymbol{G})$ is the space $\mathsf{BMO}$ defined in terms of (suitable product) Carleson measures on $\boldsymbol{G}$.

In Section \ref{ssec:cone-angle} below, we show that the space $\mathsf{H}^1_{\mathrm{sq},\psi,\beta}(\boldsymbol{G})$ is also independent of $\beta$.

Let $\nabla_i$ and $\mathcal{L}_i $ denote the subgradient and the sub-Laplacian on $ G_i$; recall that $\slashed{\nabla}_i$ denotes the gradient $(\nabla_i , \partial_{t})$ on $G_i\times\mathbb{R}_{+}$.
The (vector-valued) convolution kernels of the operators $t_i\mathcal{L}_i \mathrm{e}^{-t_i\mathcal{L}_i }$ and $t_i \slashed{\nabla}_i \mathrm{e}^{-t_i \sqrt{\mathcal{L}_i }}$ satisfy the decay, smoothness and cancellation conditions \eqref{eq:molecule} and \eqref{cancel psi}.
Hence $\mathsf{H}^1(\boldsymbol{G})$ may also be characterised via the Littlewood--Paley area functions and square functions defined using the heat and Poisson kernels.

\subsection{Independence of cone angle}\label{ssec:cone-angle}

Recall that $R(\boldsymbol{g},\boldsymbol{t}):=B_1(g_1,t_1)\times B_2(g_2,t_2)$.
Fix a parameter $\theta$ in $(0,1)$.

If $V$ is a closed subset of $\boldsymbol{G}$, then we say that $\boldsymbol{g}\in \boldsymbol{G}$ has global $\theta$-density with respect to $V$ if
\[
\frac{\left| V\cap R(\boldsymbol{g},\boldsymbol{t}) \right| }{\left| R(\boldsymbol{g},\boldsymbol{t}) \right| } \geq \theta
\]
for all $\boldsymbol{t} \in \boldsymbol{T}$.
Let $V^{*}$ be the set containing all points of global $\theta$-density of $V$, then $V^{*}$ is closed and $V^{*}\subseteq V$.
Equivalently,
\[
(V^*)^c =\{\boldsymbol{g} \in \boldsymbol{G}:\mathcal{M}_{S}(\chi_{V^c})(\boldsymbol{g}) > 1-\theta\}.
\]
It follows from the $\mathsf{L}^{}\log \mathsf{L}^{} \to \mathsf{L}^{1,\infty}$ estimate for the strong maximal function (see, for example, \cite{CF}) that $\left| (V^{*})^{c} \right| \leq c_{\theta}\left| V^{c} \right| $, where
\[
c_{\theta}
=\frac{C}{1-\theta}\left( 1+\log_2^{+}\left( \frac{1}{1-\theta}\right) \right) .
\]

For a closed subset $V$ of $\boldsymbol{G}$, write
\[
W^{\beta}(V):=\bigcup_{\boldsymbol{g}\in V}\Gamma^{\beta}(\boldsymbol{g}).
\]
\begin{lemma}\label{sfsfsf}
Suppose that $V$ is a closed set in $\boldsymbol{G}$ such that $\left| V^{c} \right| <\infty$.
Then there exist constants $c_{0}\leq 1/4$ and $C$ such that if $\beta>1$ and $\theta=1- c_{0}\beta^{-Q_1-Q_2}$, then
\begin{align*}
\iint_{W^{\beta}(V^{*})} F(\boldsymbol{g},\boldsymbol{t}) \left| R(\boldsymbol{o},\boldsymbol{t}) \right|  \,\mathrm{d}\boldsymbol{g}\,\mathrm{d}\boldsymbol{t}
&\lesssim \int_{V}\iint_{\Gamma(\boldsymbol{g})} F(\boldsymbol{h} ,\boldsymbol{t})
\,\mathrm{d}\boldsymbol{h} \,\mathrm{d}\boldsymbol{t}\,\mathrm{d}\boldsymbol{g}
\end{align*}
for all measurable nonnegative-real-valued functions $F$ on $\boldsymbol{G}\times \boldsymbol{T}$.
\end{lemma}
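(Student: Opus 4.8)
The plan is to reduce the claimed inequality to a pointwise comparison of two weights on $\boldsymbol{G}\times\boldsymbol{T}$ and then to verify that comparison by a density argument. Since $F\geq 0$, I would first apply Tonelli's theorem to each side. On the right-hand side I write the cone membership $(\boldsymbol{h},\boldsymbol{t})\in\Gamma(\boldsymbol{g})$ as $\rho_i(g_i,h_i)\leq t_i$, that is, $\boldsymbol{g}\in R(\boldsymbol{h},\boldsymbol{t})$, and integrate in $\boldsymbol{g}$ first to get
\[
\int_{V}\iint_{\Gamma(\boldsymbol{g})} F(\boldsymbol{h},\boldsymbol{t}) \,\mathrm{d}\boldsymbol{h}\,\mathrm{d}\boldsymbol{t}\,\mathrm{d}\boldsymbol{g}
= \iint_{\boldsymbol{G}\times\boldsymbol{T}} F(\boldsymbol{h},\boldsymbol{t}) \left| V \cap R(\boldsymbol{h},\boldsymbol{t}) \right| \,\mathrm{d}\boldsymbol{h}\,\mathrm{d}\boldsymbol{t}.
\]
On the left-hand side, the condition $(\boldsymbol{h},\boldsymbol{t})\in W^{\beta}(V^{*})$ is equivalent to $V^{*}\cap R(\boldsymbol{h},\beta\boldsymbol{t})\neq\emptyset$, and $\left| R(\boldsymbol{o},\boldsymbol{t}) \right|=\left| R(\boldsymbol{h},\boldsymbol{t}) \right|$ by left-invariance of Haar measure, so the left-hand side equals $\iint F(\boldsymbol{h},\boldsymbol{t})\left| R(\boldsymbol{o},\boldsymbol{t}) \right|\chi_{\{V^{*}\cap R(\boldsymbol{h},\beta\boldsymbol{t})\neq\emptyset\}}\,\mathrm{d}\boldsymbol{h}\,\mathrm{d}\boldsymbol{t}$. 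As $F$ is nonnegative, it therefore suffices to prove the pointwise weight estimate that, whenever $V^{*}\cap R(\boldsymbol{h},\beta\boldsymbol{t})\neq\emptyset$, one has $\left| R(\boldsymbol{o},\boldsymbol{t}) \right|\lesssim\left| V\cap R(\boldsymbol{h},\boldsymbol{t}) \right|$.

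For this key density step, I would fix $(\boldsymbol{h},\boldsymbol{t})$ with $V^{*}\cap R(\boldsymbol{h},\beta\boldsymbol{t})\neq\emptyset$ and choose $\boldsymbol{x}$ in the intersection, so $\rho_i(x_i,h_i)\leq\beta t_i$ for $i=1,2$. Subadditivity of each $\rho_i$ then yields the containment $R(\boldsymbol{h},\boldsymbol{t})\subseteq R(\boldsymbol{x},(\beta+1)\boldsymbol{t})$. Since $\boldsymbol{x}$ has global $\theta$-density with respect to $V$, we have $\left| V^{c}\cap R(\boldsymbol{x},(\beta+1)\boldsymbol{t}) \right|\leq(1-\theta)\left| R(\boldsymbol{x},(\beta+1)\boldsymbol{t}) \right|$, while the dilation structure gives $\left| R(\boldsymbol{x},(\beta+1)\boldsymbol{t}) \right|=(\beta+1)^{Q_1+Q_2}\left| R(\boldsymbol{o},\boldsymbol{t}) \right|$. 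Combining these with $\theta=1-c_{0}\beta^{-Q_1-Q_2}$ and $(\beta+1)/\beta<2$ for $\beta>1$, I obtain
\[
\left| V^{c}\cap R(\boldsymbol{h},\boldsymbol{t}) \right|
\leq (1-\theta)(\beta+1)^{Q_1+Q_2}\left| R(\boldsymbol{o},\boldsymbol{t}) \right|
\leq c_{0}\,2^{Q_1+Q_2}\left| R(\boldsymbol{o},\boldsymbol{t}) \right|.
\]
Choosing $c_{0}=2^{-Q_1-Q_2-1}$, which satisfies $c_{0}\leq 1/4$, bounds the right-hand side by $\tfrac{1}{2}\left| R(\boldsymbol{o},\boldsymbol{t}) \right|$, so $\left| V\cap R(\boldsymbol{h},\boldsymbol{t}) \right|\geq\tfrac{1}{2}\left| R(\boldsymbol{o},\boldsymbol{t}) \right|$. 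This is exactly the desired pointwise weight estimate, with $C=2$, and substituting it back into the reduction completes the proof.

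The two applications of Tonelli and the volume identities are routine, so the only genuine content is the density step, and there the main point to get right is the calibration between the cone aperture and the density threshold: enlarging the ball by the factor $\beta+1$ costs a volume factor $(\beta+1)^{Q_1+Q_2}$ that must be absorbed by the deficiency $1-\theta=c_{0}\beta^{-Q_1-Q_2}$, which is precisely why $\theta$ is tuned to $\beta$ in the stated way. I do not expect a serious obstacle; the care needed is in handling the open versus closed ball distinction, the triangle inequality for the homogeneous norms $\rho_i$, and in checking that the interchange of integration is justified purely by the nonnegativity of $F$.
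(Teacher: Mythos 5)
Your argument is correct and follows essentially the same route as the paper: reduce by Tonelli to the pointwise weight estimate $\left| V\cap R(\boldsymbol{h},\boldsymbol{t})\right| \gtrsim \left| R(\boldsymbol{o},\boldsymbol{t})\right|$ for $(\boldsymbol{h},\boldsymbol{t})\in W^{\beta}(V^{*})$, then prove that estimate by exploiting the global $\theta$-density of a point of $V^{*}$ in $R(\boldsymbol{h},\beta\boldsymbol{t})$ together with the calibration $1-\theta = c_{0}\beta^{-Q_1-Q_2}$. Your density step, which passes to the complement and uses the containment $R(\boldsymbol{h},\boldsymbol{t})\subseteq R(\boldsymbol{x},(\beta+1)\boldsymbol{t})$, is a slightly cleaner variant of the paper's, which instead bounds $\left| R(\tilde{\boldsymbol{g}},\beta\boldsymbol{t})\cap R(\boldsymbol{h},\boldsymbol{t})^{c}\right|$ and subtracts; the two are interchangeable.
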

\begin{proof}
First, if $(\boldsymbol{h}, \boldsymbol{t})\in W^{\beta}(V^{*})$, then there exists $\tilde{\boldsymbol{g}} \in V^{*} \cap R(\boldsymbol{h},\beta\boldsymbol{t})$.
We see easily that
\begin{align*}
\left| R(\tilde{\boldsymbol{g}},\beta \boldsymbol{t})\cap R(\boldsymbol{h},\boldsymbol{t})^{c} \right|
\leq \left( 1- 2c_{0}\beta^{-Q_1-Q_2}\right) \left| R(\tilde{\boldsymbol{g}},\beta \boldsymbol{t}) \right| ,
\end{align*}
for some constant $c_{0}\leq 1/4$.
Hence
\begin{align*}
\left| V\cap R(\boldsymbol{h},\boldsymbol{t}) \right|
&\geq \left| V\cap R(\tilde{\boldsymbol{g}},\beta \boldsymbol{t}) \right|
-\left| R(\tilde{\boldsymbol{g}},\beta \boldsymbol{t})\cap R(\boldsymbol{h},\boldsymbol{t})^{c} \right| \\
&\geq \left( \theta-1+2c_{0}\beta^{-Q_1-Q_2}\right) \left| R(\tilde{\boldsymbol{g}},\beta \boldsymbol{t}) \right| \\
&= c_{0}\beta^{-Q_1-Q_2} \left| R(\tilde{\boldsymbol{g}},\beta \boldsymbol{t}) \right|
\geq C \left| R(\boldsymbol{g},\boldsymbol{t}) \right|  .
\end{align*}

Now, by Fubini's Theorem,
\begin{align*}
&\int_{V}\iint_{\Gamma(\boldsymbol{g})} F(\boldsymbol{h},\boldsymbol{t}) \,\mathrm{d}\boldsymbol{h}\,\mathrm{d}\boldsymbol{t}\,\mathrm{d}\boldsymbol{g}\\
&=\iint_{\boldsymbol{T} \times \boldsymbol{G}}
\int_{V} \chi_{R(\boldsymbol{o},\boldsymbol{t})}
\left( \boldsymbol{h}^{-1}\boldsymbol{g}\right)
F(\boldsymbol{h},\boldsymbol{t})
\,\mathrm{d}\boldsymbol{h}  \,\mathrm{d}\boldsymbol{g}\,\mathrm{d}\boldsymbol{t}\\
&\geq\iint_{W^{\beta}(V^{*})}\int_{\boldsymbol{G}}
\chi_{R(\boldsymbol{o},\boldsymbol{t})}
\left( \boldsymbol{h}^{-1}\boldsymbol{g}\right)  F(\boldsymbol{h},\boldsymbol{t})
\,\mathrm{d}\boldsymbol{g}\,\mathrm{d}\boldsymbol{h}\,\mathrm{d}\boldsymbol{t}\\
&=\iint_{W^{\beta}(V^{*})} F(\boldsymbol{h},\boldsymbol{t})
\left| R(\boldsymbol{o},\boldsymbol{t}) \right|  \,\mathrm{d}\boldsymbol{h}\,\mathrm{d}\boldsymbol{t},
\end{align*}
as required.
\end{proof}

\begin{proposition}\label{prop equiv Hardy different beta}
With the notation of Definition \ref{def:sq-fn},
\[
\mathsf{H}^1_{\mathrm{sq},\psi,\beta}(\boldsymbol{G}) = \mathsf{H}^1_{\mathrm{sq},\psi,1}(\boldsymbol{G}) ,
\]
and these spaces have equivalent norms for all $\beta > 0$.
\end{proposition}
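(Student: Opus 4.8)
The plan is to reduce the norm equivalence to a single local $\mathsf{L}^2$ estimate supplied by Lemma \ref{sfsfsf}, and then to pass to the $\mathsf{L}^1$ comparison by a good-$\lambda$ (distribution-function) argument. Fix $\psi$. It suffices to treat $\beta > 1$, comparing $\mathcal{S}_{\psi,\beta}$ with $\mathcal{S}_{\psi,1}$; the case $0 < \beta < 1$ is handled by the same argument with the two apertures interchanged, using the evident analogue of Lemma \ref{sfsfsf} in which the reference cone has aperture $\beta$ rather than $1$. One inclusion is elementary. Since $\Gamma^1(\boldsymbol{g}) \subseteq \Gamma^\beta(\boldsymbol{g})$ and $\left| R(\boldsymbol{o},\beta\boldsymbol{t}) \right| = \beta^{Q_1+Q_2} \left| R(\boldsymbol{o},\boldsymbol{t}) \right|$, comparing the integrals in Definition \ref{def:sq-fn} gives the pointwise bound $\mathcal{S}_{\psi,1}(f) \leq \beta^{(Q_1+Q_2)/2}\,\mathcal{S}_{\psi,\beta}(f)$, and hence $\Vert \mathcal{S}_{\psi,1}(f) \Vert_{\mathsf{L}^1(\boldsymbol{G})} \lesssim \Vert \mathcal{S}_{\psi,\beta}(f) \Vert_{\mathsf{L}^1(\boldsymbol{G})}$. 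The content of the proposition is therefore the reverse bound $\Vert \mathcal{S}_{\psi,\beta}(f) \Vert_{\mathsf{L}^1(\boldsymbol{G})} \lesssim \Vert \mathcal{S}_{\psi,1}(f) \Vert_{\mathsf{L}^1(\boldsymbol{G})}$.

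The first step is the local estimate: with $\theta = 1 - c_0\beta^{-Q_1-Q_2}$ calibrated to $\beta$ as in Lemma \ref{sfsfsf}, one has
\[
\int_{V^*} \mathcal{S}_{\psi,\beta}(f)(\boldsymbol{g})^2 \,\mathrm{d}\boldsymbol{g} \lesssim \int_V \mathcal{S}_{\psi,1}(f)(\boldsymbol{g})^2 \,\mathrm{d}\boldsymbol{g}
\]
for every closed set $V$ with $\left| V^c \right| < \infty$, where $V^*$ is its set of points of $\theta$-density. To prove this I would expand the left-hand side by Definition \ref{def:sq-fn} and apply Tonelli's theorem to exchange the integration in $\boldsymbol{g}$ with that in $(\boldsymbol{h},\boldsymbol{t})$. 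As $(\boldsymbol{h},\boldsymbol{t}) \in \Gamma^\beta(\boldsymbol{g})$ is equivalent to $\boldsymbol{g} \in R(\boldsymbol{h},\beta\boldsymbol{t})$, the inner integral becomes $\left| V^* \cap R(\boldsymbol{h},\beta\boldsymbol{t}) \right|$; bounding this by $\left| R(\boldsymbol{o},\beta\boldsymbol{t}) \right|$ and observing that the integrand is supported where $R(\boldsymbol{h},\beta\boldsymbol{t})$ meets $V^*$, that is, on the sawtooth region $W^\beta(V^*)$, collapses the left-hand side to $\iint_{W^\beta(V^*)} \left| f \ast \psi_{\boldsymbol{t}}(\boldsymbol{h}) \right|^2 \,\mathrm{d}\boldsymbol{h}\,\mathrm{d}\boldsymbol{t}/\boldsymbol{t}$. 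Applying Lemma \ref{sfsfsf} with $F(\boldsymbol{h},\boldsymbol{t}) = \left| f \ast \psi_{\boldsymbol{t}}(\boldsymbol{h}) \right|^2 / (\left| R(\boldsymbol{o},\boldsymbol{t}) \right| \boldsymbol{t})$ identifies the left-hand side with the left-hand side of the Lemma and its right-hand side with $\int_V \mathcal{S}_{\psi,1}(f)(\boldsymbol{g})^2 \,\mathrm{d}\boldsymbol{g}$, which is the claim.

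To deduce the $\mathsf{L}^1$ bound I would run the good-$\lambda$ argument. For $\lambda > 0$ set $O_\lambda := \{\mathcal{S}_{\psi,1}(f) > \lambda\}$, an open set of finite measure since $f \in \mathsf{H}^1_{\mathrm{sq},\psi,1}(\boldsymbol{G})$, let $V_\lambda := O_\lambda^c$, and let $V_\lambda^*$ be its density set. Splitting $\{\mathcal{S}_{\psi,\beta}(f) > \lambda\}$ according to membership in $V_\lambda^*$, and combining the $\mathsf{L}\log\mathsf{L} \to \mathsf{L}^{1,\infty}$ bound for $\mathcal{M}_S$ in the form $\left| (V_\lambda^*)^c \right| \leq c_\theta \left| O_\lambda \right|$ with Chebyshev's inequality and the local estimate, I obtain
\[
\left| \{\mathcal{S}_{\psi,\beta}(f) > \lambda\} \right| \leq c_\theta \left| O_\lambda \right| + \frac{C}{\lambda^2}\int_{\{\mathcal{S}_{\psi,1}(f) \leq \lambda\}} \mathcal{S}_{\psi,1}(f)(\boldsymbol{g})^2 \,\mathrm{d}\boldsymbol{g}.
\]
Integrating in $\lambda$ over $(0,\infty)$, the first term contributes $c_\theta \Vert \mathcal{S}_{\psi,1}(f) \Vert_{\mathsf{L}^1(\boldsymbol{G})}$, while Tonelli's theorem applied to the second term produces the inner integral $\int_{\mathcal{S}_{\psi,1}(f)(\boldsymbol{g})}^\infty \lambda^{-2}\,\mathrm{d}\lambda = \mathcal{S}_{\psi,1}(f)(\boldsymbol{g})^{-1}$, and hence a further multiple of $\Vert \mathcal{S}_{\psi,1}(f) \Vert_{\mathsf{L}^1(\boldsymbol{G})}$. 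This yields the required reverse bound.

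The crux is the local estimate of the second step. The precise calibration $\theta = 1 - c_0\beta^{-Q_1-Q_2}$ is exactly what makes the geometric bound of Lemma \ref{sfsfsf} applicable, and the delicate point is this interplay between the aperture $\beta$, the density threshold $\theta$, and the measure of the enlargement $V^*$. A feature peculiar to the product setting is that $\mathcal{M}_S$ obeys only the $\mathsf{L}\log\mathsf{L}$ endpoint bound rather than a weak-type $(1,1)$ bound, so $c_\theta$ carries a logarithmic factor; but since $\beta$, and hence $\theta$, is fixed throughout, this affects only the implied constants. Once the local estimate is in hand, the Tonelli exchanges and the integration in $\lambda$ are routine.
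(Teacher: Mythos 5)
Your argument is correct and is essentially the paper's own proof: the same local $\mathsf{L}^2$ estimate $\int_{V^*}\mathcal{S}_{\psi,\beta}(f)^2\,\mathrm{d}\boldsymbol{g}\lesssim\int_{V}\mathcal{S}_{\psi,1}(f)^2\,\mathrm{d}\boldsymbol{g}$ obtained from Lemma \ref{sfsfsf} via Tonelli, followed by the same good-$\lambda$ splitting with $V=\{\mathcal{S}_{\psi,1}(f)\leq\lambda\}$ and integration in $\lambda$. The only (welcome) additions are your explicit treatment of the elementary pointwise direction and of the case $0<\beta<1$, which the paper leaves implicit.
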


\begin{proof}
It suffices to suppose that $\beta>1$ and show that
\[
\left\Vert  \mathcal{S}_{\psi,\beta}(f) \right\Vert_{\mathsf{L}^1(\boldsymbol{G})}
\lesssim \beta^{Q_1+Q_2}(1+\log_2^+\beta)\left\Vert  \mathcal{S}_{\psi,1}(f) \right\Vert_{\mathsf{L}^1(\boldsymbol{G})}.
\]
For all $\lambda>0$, set
\begin{align*}
V=\{\boldsymbol{g} \in \boldsymbol{G} :\mathcal{S}_{\psi,1}(f)(\boldsymbol{g})\leq \lambda\},
\end{align*}
and $\theta=1- \beta^{-Q_1-Q_2}/4$.
Then, from Lemma \ref{sfsfsf} and Fubini's theorem,
\begin{align*}
\int_{V^{*}}\mathcal{S}_{\psi,\beta}(f)(\boldsymbol{g})^{2}\,\mathrm{d}\boldsymbol{g}
&=\int_{V^{*}}\iint_{\Gamma^{\beta}(\boldsymbol{g})}
\frac{\left| f\ast \psi_{\boldsymbol{t}} (\boldsymbol{h}) \right| ^{2} }{\left| R(\boldsymbol{o},\beta \boldsymbol{t}) \right| }
\,\mathrm{d}\boldsymbol{h} \,\frac{\mathrm{d}\boldsymbol{g}}{\boldsymbol{t}}\,\mathrm{d}\boldsymbol{g}\\
&\lesssim \beta^{Q_1+Q_2}
\iint_{W^{\beta}(V^{*})}\left| f\ast \psi_{\boldsymbol{t}} (\boldsymbol{h}) \right| ^{2} \,\mathrm{d}\boldsymbol{h}\,\frac{\mathrm{d}\boldsymbol{g}}{\boldsymbol{t}}\\
&\lesssim\beta^{Q_1+Q_2}
\int_{V}\iint_{\Gamma(\boldsymbol{g})}
\frac{\left| f\ast \psi_{\boldsymbol{t}} (\boldsymbol{h}) \right| ^{2}}
{\left| R(\boldsymbol{o},\beta \boldsymbol{t}) \right| }
\,\mathrm{d}\boldsymbol{h}\,\frac{\mathrm{d}\boldsymbol{g}}{\boldsymbol{t}}
\,\mathrm{d}\boldsymbol{g}\\
&\eqsim\beta^{Q_1+Q_2}\int_{V}\mathcal{S}_{\psi,1}(f)^{2}\,\mathrm{d}\boldsymbol{g}.
\end{align*}
Therefore
\begin{align*}
&\left| \{\boldsymbol{g}\in \boldsymbol{G} : \mathcal{S}_{\psi,\beta}(f)(\boldsymbol{g}) > \lambda\} \right|  \\
&\qquad\leq \left| (V^{*})^c \right|  + \frac{C}{\lambda^{2}} \int_{V^{*}}\mathcal{S}_{\psi,\beta}(f)(\boldsymbol{g})^{2}\,\mathrm{d}\boldsymbol{g}\\
&\qquad\leq C\beta^{Q_1+Q_2} (1 + \log_2^{+}\beta)
\left( \left| (V^{*})^c \right|  +\frac{1}{\lambda^{2}}
\int_{V}\mathcal{S}_{\psi,1}(f)(\boldsymbol{g})^{2} \,\mathrm{d}\boldsymbol{g} \right) .
\end{align*}
Integrating with respect to $\lambda$ yields
\[
\left\Vert  \mathcal{S}_{\psi,\beta}(f) \right\Vert_{\mathsf{L}^1(\boldsymbol{G})}
\lesssim \beta^{Q_1+Q_2}(1+\log_2^{+}\beta)\left\Vert  \mathcal{S}_{\psi,1}(f) \right\Vert_{\mathsf{L}^1(\boldsymbol{G})} ,
\]
which completes the proof of Proposition \ref{prop equiv Hardy different beta}.
\end{proof}

\subsection{Summary}
The known results cited in Section \ref{ssec:sq-fn-area-fn-Hardy} and our additional material here may be summarised in the following proposition.

\begin{proposition}\label{prop equiv Hardy via L-P}
The atomic Hardy space $\mathsf{H}^1_{\mathrm{atom}}(\boldsymbol{G})$ and the square function and area function Hardy spaces $\mathsf{H}^1_{\mathrm{sq},\psi,\beta}$ for different $\psi$ and $\beta$ coincide and have equivalent norms.
\end{proposition}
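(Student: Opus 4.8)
The plan is to assemble the proposition from the results cited in Section~\ref{ssec:sq-fn-area-fn-Hardy} together with Proposition~\ref{prop equiv Hardy different beta}, identifying every space named in the statement with the common space $\mathsf{H}^1(\boldsymbol{G})$. First I would recall from \cite{HLL} that the square function Hardy space $\mathsf{H}^1_{\mathrm{sq},\psi,0}(\boldsymbol{G})$ is independent of the choice of functions $\psi^{[i]}$ satisfying the standard conditions \eqref{eq:molecule} and \eqref{cancel psi}, and that the area operator $\mathcal{S}_{\psi,1}$ characterises the same space. In other words, for every admissible $\psi$,
\[
\mathsf{H}^1_{\mathrm{sq},\psi,0}(\boldsymbol{G}) = \mathsf{H}^1_{\mathrm{sq},\psi,1}(\boldsymbol{G}) = \mathsf{H}^1(\boldsymbol{G}),
\]
with equivalent norms. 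The content behind this step is the Plancherel--P\'olya inequality machinery of \cite{HLL}.

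Next I would invoke Proposition~\ref{prop equiv Hardy different beta}, which gives $\mathsf{H}^1_{\mathrm{sq},\psi,\beta}(\boldsymbol{G}) = \mathsf{H}^1_{\mathrm{sq},\psi,1}(\boldsymbol{G})$ with equivalent norms for each fixed admissible $\psi$ and each $\beta > 0$. Combining this with the previous display, for any admissible $\psi$, $\psi'$ and any $\beta, \beta' > 0$ I obtain the chain
\[
\mathsf{H}^1_{\mathrm{sq},\psi,\beta}(\boldsymbol{G})
= \mathsf{H}^1_{\mathrm{sq},\psi,1}(\boldsymbol{G})
= \mathsf{H}^1(\boldsymbol{G})
= \mathsf{H}^1_{\mathrm{sq},\psi',1}(\boldsymbol{G})
= \mathsf{H}^1_{\mathrm{sq},\psi',\beta'}(\boldsymbol{G}),
\]
in which every equality holds with equivalence of norms. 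This settles the independence from both $\psi$ and $\beta$ among the square and area function spaces, the degenerate case $\beta = 0$ being supplied directly by the first display rather than by Proposition~\ref{prop equiv Hardy different beta}. Finally I would bring in the atomic space: by \cite{HLPW, CDLWY} one has $\mathsf{H}^1_{\mathrm{atom}}(\boldsymbol{G}) = \mathsf{H}^1(\boldsymbol{G})$ with equivalent norms, and concatenating this identification with the chain above completes the argument.

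There is no genuine analytic obstacle here, since the substance resides entirely in the quoted results: the $\psi$-independence and the coincidence with the atomic space rest on the Plancherel--P\'olya inequality and the wavelet and atomic decomposition theory of \cite{HLL, HLPW, CDLWY}, while the $\beta$-independence is precisely Proposition~\ref{prop equiv Hardy different beta}. The only point requiring care is bookkeeping: tracking the equivalence of norms through each link of the chain, and remembering that $\mathcal{S}_{\psi,0}$ is a square function rather than a genuine area function, so that the case $\beta = 0$ must be routed through the $\psi$-independence statement of \cite{HLL} (which treats $\beta = 0$) rather than through Proposition~\ref{prop equiv Hardy different beta} (which assumes $\beta > 0$).
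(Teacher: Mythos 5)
Your proposal is correct and follows exactly the route the paper intends: Proposition~\ref{prop equiv Hardy via L-P} is stated as a summary assembled from the cited results of \cite{HLL, HLPW, CDLWY} (giving $\psi$-independence, the $\beta=0$ and $\beta=1$ cases, and the atomic identification) together with Proposition~\ref{prop equiv Hardy different beta} for general $\beta>0$. Your care in routing the $\beta=0$ case through the Plancherel--P\'olya machinery of \cite{HLL} rather than through Proposition~\ref{prop equiv Hardy different beta} is exactly the right bookkeeping.
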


\section{The singular integral characterisation}

We consider a stratified Lie group $G$.
Recall that $\mathcal{R}_{0}$ is the identity operator $\mathcal{I}$, and when $1 \leq j \leq d_i$, the $j$th Riesz operator $\mathcal{R}_{j}$ on $G$ is defined by
\[
\mathcal{R}_{j}:=\mathcal{X}_{j} (\mathcal{L})^{-{1}/{2}};
\]
its convolution kernel, $k_{j}$ say, is smooth away from the identity of $G$, and homogeneous of degree $-Q$.
According to Christ and Geller \cite{CG}, $f \in \mathsf{H}^1(G)$ if and only if all $\mathcal{R}_{j} f \in \mathsf{L}^1(G)$, and there is a corresponding norm equivalence.
We say that the singular integral operators $\mathcal{R}_{j}$, where $0 \leq j \leq d_j$, characterise $\mathsf{H}^1(G)$.

\begin{definition}
Suppose that the singular integral operators $\mathcal{K}^{[i]}_j$, where $0 \leq j \leq n_i$, characterise $\mathsf{H}^1(G_i)$, in the sense above.
The space $\mathsf{H}^1_{\mathrm{SIO}}(\boldsymbol{G})$ is defined to be the set of all $f\in \mathsf{L}^{1}(\boldsymbol{G})$ such that
\[
\sum_{j_1=0}^{n_1}\sum_{j_2=0}^{n_2}\left\Vert  \mathcal{K}_{j_1}^{[1]} \otimes \mathcal{K}_{j_2}^{[2]} f \right\Vert_{\mathsf{L}^{1}(\boldsymbol{G})}<\infty,
\]
with norm
\begin{align*}
\Vert f \Vert_{\mathsf{H}^1_{\mathrm{SIO}}(\boldsymbol{G})}
:=\sum_{j_1=0}^{n_1}\sum_{j_2=0}^{n_2}
\left\Vert  \mathcal{K}_{j_1}^{[1]} \otimes \mathcal{K}_{j_2}^{[2]} f \right\Vert_{\mathsf{L}^{1}(\boldsymbol{G})}.
\end{align*}
\end{definition}

In this section, we generalise Theorem \ref{thm Riesz}, which states that the spaces $\mathsf{H}^1_{\mathrm{Riesz}}(\boldsymbol{G})$ and $\mathsf{H}^1(\boldsymbol{G})$ coincide and have equivalent norms.

\begin{theorem}\label{thm:SIO-Hardy}
Suppose that the singular integral operators $\mathcal{K}^{[i]}_j$, where $0 \leq j \leq n_i$, characterise $\mathsf{H}^1(G_i)$.
Then the double singular integral operators $\mathcal{K}_{j_1}^{[1]} \otimes \mathcal{K}_{j_2}^{[2]}$ characterise the Hardy space $\mathsf{H}^1(\boldsymbol{G})$.
That is, $f\in \mathsf{H}^1(\boldsymbol{G})$ if and only if each $\mathcal{K}_{j_1}^{[1]} \otimes \mathcal{K}_{j_2}^{[2]} f$ is in $\mathsf{L}^{1}(\boldsymbol{G})$ and moreover
\[
\Vert f \Vert_{\mathsf{H}^1(\boldsymbol{G}) }
\eqsim \sum_{j_1=0}^{n_1}\sum_{j_2=0}^{n_2}\left\Vert  \mathcal{K}_{j_1}^{[1]} \otimes \mathcal{K}_{j_2}^{[2]} f \right\Vert_{\mathsf{L}^{1}(\boldsymbol{G})}.
\]
\end{theorem}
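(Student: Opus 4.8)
The plan is to prove the two norm inequalities of Theorem~\ref{thm:SIO-Hardy} separately, in each case reducing to the square/area function characterisation of $\mathsf{H}^1(\boldsymbol{G})$ provided by Proposition~\ref{prop equiv Hardy via L-P}. The inequality $\sum_{j_1,j_2}\Vert\mathcal{K}^{[1]}_{j_1}\otimes\mathcal{K}^{[2]}_{j_2}f\Vert_{\mathsf{L}^1(\boldsymbol{G})}\lesssim\Vert f\Vert_{\mathsf{H}^1(\boldsymbol{G})}$ is the assertion that each double operator maps $\mathsf{H}^1(\boldsymbol{G})$ into $\mathsf{L}^1(\boldsymbol{G})$. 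Since each $\mathcal{K}^{[i]}_j$ characterises $\mathsf{H}^1(G_i)$ in the Christ--Geller sense, it is a convolution Calder\'on--Zygmund operator whose kernel satisfies standard decay and smoothness; the tensor product then has a product kernel of the type for which $\mathsf{H}^1(\boldsymbol{G})\to\mathsf{L}^1(\boldsymbol{G})$ boundedness is already known (recorded in Section~\ref{ssec:sq-fn-area-fn-Hardy} for the Riesz transforms). Via the atomic decomposition of Definition~\ref{def:atom-Hardy} this reduces to a uniform $\mathsf{L}^1$ bound on the image of a single particle, using the product cancellation \eqref{eq:product-atom-cancel} against kernel smoothness; so this direction is essentially a known boundedness statement.

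For the reverse inequality I would first treat the Riesz case of Theorem~\ref{thm Riesz}, where the mechanism is cleanest, and then reduce the general statement to it. By Proposition~\ref{prop equiv Hardy via L-P} it suffices to bound $\Vert\mathcal{S}_{\psi,\beta}(f)\Vert_{\mathsf{L}^1}$ for one convenient $\psi$; I take $\psi^{[i]}$ to be the components of the Poisson gradient $t_i\slashed{\nabla}_i P^{[i]}_{t_i}$, which are molecules with cancellation by Lemma~\ref{lem:heat-pois-estim}. The engine is the commutation identity in each factor: writing $P^{[i]}_{t_i}=\mathrm{e}^{-t_i\sqrt{\mathcal{L}_i}}$ and $\mathcal{R}^{[i]}_j=\mathcal{X}^{[i]}_j\mathcal{L}_i^{-1/2}$, one obtains, up to signs,
\[
t_i\,\mathcal{X}^{[i]}_{j}\bigl(f\ast P^{[i]}_{t_i}\bigr)=\pm\,\bigl(\mathcal{R}^{[i]}_{j}f\bigr)\ast Q^{[i]}_{t_i},
\qquad
t_i\,\partial_{t_i}\bigl(f\ast P^{[i]}_{t_i}\bigr)=f\ast Q^{[i]}_{t_i},
\]
so that the components of $t_i\slashed{\nabla}_i(f\ast P^{[i]}_{t_i})$ are exactly $\pm(\mathcal{R}^{[i]}_jf)\ast Q^{[i]}_{t_i}$ for $0\le j\le d_i$. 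Tensoring the two factors, the components of the mixed gradient of $f\ast(P^{[1]}_{t_1}\otimes P^{[2]}_{t_2})$ are $\pm(\mathcal{R}^{[1]}_{j_1}\otimes\mathcal{R}^{[2]}_{j_2}f)\ast(Q^{[1]}_{t_1}\otimes Q^{[2]}_{t_2})$, whence pointwise
\[
\mathcal{S}_{\psi,\beta}(f)(\boldsymbol{g})^{2}
\eqsim \sum_{j_1=0}^{d_1}\sum_{j_2=0}^{d_2}\mathcal{S}_{Q,\beta}\bigl(g_{j_1 j_2}\bigr)(\boldsymbol{g})^{2},
\]
where $g_{j_1 j_2}:=\mathcal{R}^{[1]}_{j_1}\otimes\mathcal{R}^{[2]}_{j_2}f$ and $\mathcal{S}_{Q,\beta}$ is the area function built from the molecule $Q^{[1]}_{t_1}\otimes Q^{[2]}_{t_2}$.

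The crux is then to bound $\sum_{j_1,j_2}\Vert\mathcal{S}_{Q,\beta}(g_{j_1 j_2})\Vert_{\mathsf{L}^1}$ by $\sum_{j_1,j_2}\Vert g_{j_1 j_2}\Vert_{\mathsf{L}^1}$. This cannot be done termwise by a general $\mathsf{L}^1$ bound on the area function---such a bound is false---so the collective structure of the family and a subharmonicity argument with exponent below $1$ are essential. I would form the double-Poisson extensions $U_{j_1 j_2}=g_{j_1 j_2}\ast(P^{[1]}_{t_1}\otimes P^{[2]}_{t_2})$, assemble the vector $\mathscr{U}=(U_{j_1 j_2})$, and use that the operators $\{\mathcal{R}^{[1]}_{j_1}\otimes\mathcal{R}^{[2]}_{j_2}\}$ form a generalized Cauchy--Riemann system in each factor separately, so that $\lvert\mathscr{U}\rvert^{p_0}$ is subharmonic in each variable $(g_i,t_i)$ for some $p_0=p_0(Q_1,Q_2)\in(0,1)$. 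Setting $\mathscr{G}:=\bigl(\sum_{j_1,j_2}\lvert g_{j_1 j_2}\rvert^2\bigr)^{1/2}$, one dominates the biparameter nontangential maximal function $\mathscr{U}^{\ast}$ by the strong maximal function $\mathcal{M}_S$ of the boundary data $\mathscr{G}^{p_0}$; since $1/p_0>1$ and $\mathcal{M}_S$ is bounded on $\mathsf{L}^{1/p_0}(\boldsymbol{G})$, this yields
\[
\bigl\Vert\mathscr{U}^{\ast}\bigr\Vert_{\mathsf{L}^{1}(\boldsymbol{G})}
\lesssim \bigl\Vert\mathscr{G}\bigr\Vert_{\mathsf{L}^{1}(\boldsymbol{G})}
\eqsim \sum_{j_1,j_2}\bigl\Vert g_{j_1 j_2}\bigr\Vert_{\mathsf{L}^{1}(\boldsymbol{G})},
\]
and a local (Caccioppoli-type) energy estimate bounds each $\mathcal{S}_{Q,\beta}(g_{j_1 j_2})$ by $\mathscr{U}^{\ast}$ in $\mathsf{L}^1$, closing the chain.

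For the general operators $\mathcal{K}^{[i]}_j$ the clean commutation identity above is unavailable, so one instead invokes the one-parameter hypotheses directly, applying the characterisation of $\mathsf{H}^1(G_1)$ to the slices of $f$ and then that of $\mathsf{H}^1(G_2)$, inside the product square-function framework. I expect the main obstacle to be the third paragraph: establishing the biparameter subharmonicity and, above all, controlling the \emph{product} nontangential maximal function $\mathscr{U}^{\ast}$ by the strong maximal function of $\mathscr{G}^{p_0}$. Because the multiparameter setting lacks the Calder\'on--Zygmund and Whitney decompositions available in one parameter, this step must be carried out with the product-geometry apparatus---Journ\'e-type covering together with the enlargement $\widetilde{U}$ and the maximal-rectangle families $\mathscr{M}(U)$ introduced above---in the spirit of \cite{CCLLO}; and the delicate point is to reconcile the resulting maximal control with the square-function characterisation \emph{without} circularly invoking Theorem~\ref{thm max}.
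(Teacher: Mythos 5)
Your first paragraph (the $\mathsf{H}^1\to\mathsf{L}^1$ direction) matches the paper, which simply cites the known boundedness of product singular integrals from \cite{HLPW} and \cite{CDLWY}. The converse direction, however, has a genuine gap at its crux. Your plan rests on the vector $\mathscr{U}=(U_{j_1j_2})$ of double Poisson extensions forming a ``generalized Cauchy--Riemann system'' in each factor, so that $\lvert\mathscr{U}\rvert^{p_0}$ is subharmonic for some $p_0<1$ and the nontangential maximal function can be controlled by $\mathcal{M}_S(\mathscr{G}^{p_0})^{1/p_0}$. On a stratified group this is exactly what is \emph{not} available: the Riesz transforms $\mathcal{X}_j\mathcal{L}^{-1/2}$ do not fit into a conjugate-harmonic-function system for the degenerate operator $\mathcal{L}-\partial_t^2$, and no subharmonicity of $\lvert\mathscr{U}\rvert^{p_0}$ is known (the whole point of Christ--Geller's one-parameter theorem, which this paper takes as a black box, was to circumvent the absence of such systems on homogeneous groups). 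Your closing paragraph concedes that the general operators $\mathcal{K}^{[i]}_j$ admit no commutation identity at all and proposes to ``apply the characterisation of $\mathsf{H}^1(G_1)$ to the slices of $f$,'' but membership of slices in $\mathsf{H}^1(G_1)$ neither follows from nor implies membership in the product space $\mathsf{H}^1(\boldsymbol{G})$, so this is not a reduction. Finally, the step bounding $\mathcal{S}_{Q,\beta}$ by $\mathscr{U}^{\ast}$ in $\mathsf{L}^1$ in the biparameter setting is itself the hard content of the paper's Proposition \ref{prop poisson L-P and max}, so even if subharmonicity held you would be importing most of Section 5.

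The paper's actual argument is entirely different and much shorter. Taking a Littlewood--Paley partition $\sum_m\eta(2^{-m}\lambda)=1$ and Rademacher variables $r_m$, the randomised multiplier $T_s=\sum_m r_m(s)\eta(2^{-m}\mathcal{L}_i)$ is bounded from $\mathsf{H}^1(G_i)$ to $\mathsf{L}^1(G_i)$ uniformly in $s$ by the multiplier theorem; combined with the one-parameter hypothesis this gives $\Vert T_sf\Vert_{\mathsf{L}^1(G_i)}\lesssim\sum_j\Vert\mathcal{K}^{[i]}_jf\Vert_{\mathsf{L}^1(G_i)}$. Iterating in the two factors (using that convolution operators on $G_1$ and $G_2$ commute) and then applying Khinchin's inequality recovers the discrete double square function $\bigl(\sum_{m,n}\lvert\eta(2^{-m}\mathcal{L}_1)\eta(2^{-n}\mathcal{L}_2)f\rvert^2\bigr)^{1/2}$ in $\mathsf{L}^1(\boldsymbol{G})$, which characterises $\mathsf{H}^1(\boldsymbol{G})$ by Proposition \ref{prop equiv Hardy via L-P}. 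This route needs no conjugate systems, works for arbitrary characterising families $\mathcal{K}^{[i]}_j$, and extends to more factors by repeating the randomisation. You should replace your third and fourth paragraphs with an argument of this type.
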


It is known (see \cite{HLPW} and \cite{CDLWY}) that singular integral operators associated to homogeneous kernels are bounded from $\mathsf{H}^1(\boldsymbol{G})$ to $\mathsf{L}^1(\boldsymbol{G})$, so it suffices to show that if all the double singular integral transforms of a function $f$ are in $\mathsf{L}^{1}(\boldsymbol{G})$ then $f \in \mathsf{H}^1(\boldsymbol{G})$.
Our proof of Proposition \ref{SIO} below extends \cite{CCLLO},
which introduced a new method, using randomisation, to characterise flag Hardy space on Heisenberg groups by products of singular integrals.

\subsection{The square function and singular integral transforms}

\begin{proposition}\label{SIO}
Suppose that $f\in \mathsf{L}^{2}(\boldsymbol{G})$, and $\mathcal{K}_{j_1}^{[1]} \otimes \mathcal{K}_{j_2}^{[2]} f\in \mathsf{L}^{1}(\boldsymbol{G})$ when $j_i = 0, \dots, n_i $.
Then $f \in \mathsf{H}^1(\boldsymbol{G})$, and
\begin{align*}
\Vert f \Vert_{\mathsf{H}^1(\boldsymbol{G})}\leq C\Vert f \Vert_{\mathsf{H}^1_{\mathrm{SIO}}(\boldsymbol{G})}.
\end{align*}
\end{proposition}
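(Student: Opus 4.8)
The plan is to establish the equivalent area-function estimate
\[
\bigl\| \mathcal{S}_{\slashed{\nabla},\beta}(f) \bigr\|_{\mathsf{L}^1(\boldsymbol{G})}
\lesssim \Vert f \Vert_{\mathsf{H}^1_{\mathrm{SIO}}(\boldsymbol{G})},
\]
where $\mathcal{S}_{\slashed{\nabla},\beta}$ is the Littlewood--Paley area function built from the full Poisson gradient $t_1\slashed{\nabla}_1 \otimes t_2 \slashed{\nabla}_2 P_{\boldsymbol{t}}$; by Proposition~\ref{prop equiv Hardy via L-P} and the Poisson-kernel characterisation recorded at the end of Section~\ref{ssec:sq-fn-area-fn-Hardy}, its left-hand side is comparable to $\Vert f \Vert_{\mathsf{H}^1(\boldsymbol{G})}$. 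I shall describe the argument for the double Riesz system $\mathcal{K}^{[i]}_j = \mathcal{R}^{[i]}_j$, which is the heart of Theorem~\ref{thm Riesz}; the general characterising system is reached afterwards by transferring, within each factor, between the $\mathcal{K}^{[i]}_j$ and the Riesz transforms using the one-parameter characterisation hypothesis. The starting point is the identity converting horizontal Poisson derivatives into Riesz transforms: since $t_i \partial_{t_i} P^{[i]}_{t_i} = Q^{[i]}_{t_i}$ and $t_i \mathcal{X}^{[i]}_j P^{[i]}_{t_i} = -\,\mathcal{R}^{[i]}_j Q^{[i]}_{t_i}$, every scalar component of $t_1\slashed{\nabla}_1 \otimes t_2 \slashed{\nabla}_2 P_{\boldsymbol{t}} f$ equals $\pm\, Q_{\boldsymbol{t}} F_{j_1 j_2}$, where $Q_{\boldsymbol{t}} = Q^{[1]}_{t_1} \otimes Q^{[2]}_{t_2}$ and $F_{j_1 j_2} := \mathcal{R}^{[1]}_{j_1} \otimes \mathcal{R}^{[2]}_{j_2} f$, with $j_i$ running over $0,\dots,d_i$. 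Hence, pointwise,
\[
\mathcal{S}_{\slashed{\nabla},\beta}(f)(\boldsymbol{g})^2
= \sum_{j_1=0}^{d_1}\sum_{j_2=0}^{d_2}
\iint_{\Gamma^\beta(\boldsymbol{g})}
\bigl| Q_{\boldsymbol{t}} F_{j_1 j_2}(\boldsymbol{h}) \bigr|^2
\frac{\mathrm{d}\boldsymbol{h}}{\left| R(\boldsymbol{o},\beta\boldsymbol{t}) \right|}
\,\frac{\mathrm{d}\boldsymbol{t}}{\boldsymbol{t}},
\]
and each $F_{j_1 j_2}$ lies in $\mathsf{L}^1(\boldsymbol{G})$ by hypothesis and in $\mathsf{L}^2(\boldsymbol{G})$ because $f \in \mathsf{L}^2(\boldsymbol{G})$.

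The second step is the randomisation over the finite set of Riesz indices that replaces the classical conjugate-harmonic machinery. With Rademacher signs $\epsilon = (\epsilon_{j_1})$ and $\eta = (\eta_{j_2})$ and $\mathbb{E}$ the average over them, orthogonality gives $\sum_{j_1,j_2} | Q_{\boldsymbol{t}} F_{j_1 j_2} |^2 = \mathbb{E}_{\epsilon,\eta} | Q_{\boldsymbol{t}} G^{\epsilon,\eta} |^2$, where
\[
G^{\epsilon,\eta}
:= \Bigl( \textstyle\sum_{j_1} \epsilon_{j_1} \mathcal{R}^{[1]}_{j_1} \Bigr) \otimes \Bigl( \textstyle\sum_{j_2} \eta_{j_2} \mathcal{R}^{[2]}_{j_2} \Bigr) f
= \textstyle\sum_{j_1,j_2} (\pm)\,\epsilon_{j_1}\eta_{j_2} F_{j_1 j_2},
\]
so that each $G^{\epsilon,\eta}$ is a single randomised singular-integral transform of $f$, lying in $\mathsf{L}^2(\boldsymbol{G})$ with $\Vert G^{\epsilon,\eta} \Vert_{\mathsf{L}^1(\boldsymbol{G})} \le \Vert f \Vert_{\mathsf{H}^1_{\mathrm{SIO}}(\boldsymbol{G})}$. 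The point of this manoeuvre, following \cite{CCLLO}, is that in the one-parameter theory the $\mathsf{L}^1$ control of the vector conjugate system $(P_{\boldsymbol{t}} F_{j_1 j_2})_{j_1 j_2}$ rests on the subharmonicity of $|u|^p$ for some $p<1$, a tool with no workable biparameter analogue; the randomisation linearises the $\ell^2$ square, reducing matters to uniform-in-signs estimates for the scalar area integrals of $G^{\epsilon,\eta}$, which can be attacked by a real-variable decomposition. One must, however, keep the whole family $(F_{j_1 j_2})$ coupled: collapsing back by $\ell^2 \le \ell^1$ is fatal, because the area function is only of weak type $(1,1)$ and is genuinely unbounded on $\mathsf{L}^1$, so no bound $\Vert \mathcal{S}_{Q}(G^{\epsilon,\eta}) \Vert_{\mathsf{L}^1} \lesssim \Vert G^{\epsilon,\eta} \Vert_{\mathsf{L}^1}$ is available for a single function.

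The crux, and the step I expect to be the main obstacle, is therefore a uniform-in-signs endpoint estimate obtained through a Calder\'on--Zygmund/Journ\'e stopping-time decomposition adapted to the product structure, with the stopping time built simultaneously from the data $F_{j_1 j_2}\in\mathsf{L}^1(\boldsymbol{G})$ so that the conjugate coupling is retained. For each height $\lambda > 0$ I would form the level set $\Omega_\lambda = \{ \mathcal{M}_S (\textstyle\sum_{j_1 j_2}|F_{j_1 j_2}|) > \lambda \}$, pass to its enlargement $\widetilde{\Omega_\lambda}$ and to the maximal pseudodyadic rectangles $\mathscr{M}(\Omega_\lambda)$, and use the cone-angle independence of Proposition~\ref{prop equiv Hardy different beta} to work over a sufficiently wide cone. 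The portion of the area integral over the complement of $\widetilde{\Omega_\lambda}$ is to be controlled in $\mathsf{L}^2$ by Plancherel and the $\mathsf{L}^2$ membership of $f$ (this is where the hypothesis $f\in\mathsf{L}^2(\boldsymbol{G})$ is used), while the contribution of $\widetilde{\Omega_\lambda}$ is estimated through its measure by means of the $\mathsf{L}\log\mathsf{L} \to \mathsf{L}^{1,\infty}$ bound for the strong maximal operator, the randomised signs supplying the cancellation that lets the good-part estimates be summed; integrating in $\lambda$ and averaging over the finitely many sign patterns then yields the $\mathsf{L}^1$ bound. The genuine difficulty, and the technical heart of the argument, is to carry out this two-parameter stopping-time decomposition---in particular the bookkeeping of the maximal rectangles and the summation of the local $\mathsf{L}^2$ estimates against the global measure bounds---in the absence of both the classical subharmonicity input and a Whitney decomposition, which is exactly the point at which the biparameter theory departs from the one-parameter Christ--Geller argument.
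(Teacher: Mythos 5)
There is a genuine gap at the step you yourself identify as the crux. Your randomisation is over the \emph{finite} set of Riesz indices, which only rewrites the $\ell^2$ sum $\sum_{j_1,j_2}|Q_{\boldsymbol{t}}F_{j_1j_2}|^2$ as an average of $|Q_{\boldsymbol{t}}G^{\epsilon,\eta}|^2$ over finitely many sign patterns; since the average is over a finite set, any bound you prove uniformly in $(\epsilon,\eta)$ for a single $G^{\epsilon,\eta}$ is equivalent (up to a constant) to a bound for each $F_{j_1j_2}$ separately, so this randomisation cannot ``supply cancellation'' --- it is essentially a no-op. You are then left needing $\Vert \mathcal{S}_{Q}(G^{\epsilon,\eta})\Vert_{\mathsf{L}^1(\boldsymbol{G})}$ controlled by $\Vert G^{\epsilon,\eta}\Vert_{\mathsf{L}^1(\boldsymbol{G})}$, which, as you correctly observe, is false in general (it would say $\mathsf{L}^1$ membership implies $\mathsf{H}^1$ membership). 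The proposed repair --- a biparameter Calder\'on--Zygmund/Journ\'e stopping-time decomposition with the stopping time built from $\sum|F_{j_1j_2}|$ --- is precisely the kind of argument the paper's introduction says is unavailable in the multiparameter setting, and you do not carry it out; the proposal therefore stops exactly where the proof has to begin.

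The paper's proof randomises over a different index set, and that is the idea your argument is missing: it puts Rademacher signs $r_m(s)$ on the \emph{Littlewood--Paley scales}, forming $T_s=\sum_{m\in\mathbb{Z}}r_m(s)\eta(2^{-m}\mathcal{L}_i)$ in each factor. The random sum is a single Mihlin--H\"ormander-type spectral multiplier with symbol bounds uniform in $s$, hence bounded from $\mathsf{H}^1(G_i)$ to $\mathsf{L}^1(G_i)$ uniformly in $s$; combining this with the assumed one-parameter characterisation gives $\Vert T_s h\Vert_{\mathsf{L}^1(G_i)}\lesssim\sum_{j}\Vert\mathcal{K}^{[i]}_{j}h\Vert_{\mathsf{L}^1(G_i)}$. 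Because convolution operators in the two factors commute, this one-parameter estimate can be iterated in the two variables, yielding an $\mathsf{L}^1(\boldsymbol{G})$ bound for the doubly randomised sum in terms of $\Vert f\Vert_{\mathsf{H}^1_{\mathrm{SIO}}(\boldsymbol{G})}$; Khinchin's inequality then converts the average over $(s_1,s_2)\in[0,1]^2$ back into the $\mathsf{L}^1$ norm of the discrete double square function, which is equivalent to $\Vert f\Vert_{\mathsf{H}^1(\boldsymbol{G})}$ by Proposition \ref{prop equiv Hardy via L-P}. In other words, the randomisation over scales linearises the square function into an operator that \emph{iterates} across the product structure, so that the one-parameter Christ--Geller theorem can be applied twice and no biparameter stopping-time decomposition is needed. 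I would encourage you to rework your argument along these lines.
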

\begin{proof}
We use a randomisation argument coupled with the analogous one-parameter result of Christ and Geller \cite{CG}.
Fix a smooth function $\eta$ on $\mathbb{R}_{+}$, supported in $[1/2,2]$, such that $\sum_{m\in\mathbb{Z}}\eta(2^{-m}t)=1$ for all $t\in \mathbb{R}_{+}$.
By Section \ref{ssec:funct-calc}, the convolution kernels $k_{\eta(\mathcal{L}_i)}$ of the operators $\eta(\mathcal{L}_i)$ on $G_i$ are Schwartz functions of mean $0$.

Let $r_{m}:[0,1]\to \mathbb{R}$ be a collection of independent Rademacher random variables (see \cite{Gra}).
Fix $i$, take $\eta$ as above, and define
\begin{align*}
T_{s}(f) = \sum_{m\in\mathbb{Z}}r_{m}(s)\eta(2^{-m}\mathcal{L}_i )f
\end{align*}
for all $f\in \mathsf{H}^1(G_i )$ and all $s \in [0,1]$.
Straightforward calculation shows that
\begin{align*}
\biggl| \xi^{k}\partial_{\xi}^{k}\biggl( \sum_{m\in\mathbb{Z}}r_{m}(s)\eta(2^{-m}\xi)\biggr) \biggr|
\leq C_{k}
\qquad\forall \xi \in \mathbb{R}_{+} \quad\forall k \in \mathbb{N},
\end{align*}
and from the multiplier theorem (see for example, \cite[Theorem 6.25]{FoSt}), the operator $T_{s}$ is bounded from $\mathsf{H}^1(G_i)$ to $\mathsf{L}^{1}(G_i )$ with norm uniformly bounded for $s\in[0,1]$.
Together with the Christ--Geller characterisation \cite[Theorem A]{CG}, this implies that
\begin{align*}
\biggl\Vert
\sum_{m\in\mathbb{Z}}r_{m}(s)\eta(2^{-m}\mathcal{L}_i )f \biggr\Vert_{\mathsf{L}^{1}(G_i )}
\lesssim \biggl( \sum_{j_i =0}^{n_i }\left\Vert  \mathcal{K}_{j_i }^{[i]}f \right\Vert_{\mathsf{L}^{1}(G_i )}\biggr)
\end{align*}
for all $f\in \mathsf{L}^{1}(G_i )$ such that $\mathcal{K}_{j_i }^{[i]}f\in \mathsf{L}^{1}(G_i )$.
Iteration of the argument shows that
\begin{align*}
&\biggl\Vert  \sum_{m\in\mathbb{Z}}r_{m}(s_1)\eta(2^{-m}\mathcal{L}_1)
 \biggl( \sum_{n\in\mathbb{Z}}r_{n}(s_2)\eta(2^{-n}\mathcal{L}_2)f\biggr)  \biggr\Vert_{\mathsf{L}^{1}(\boldsymbol{G})}\\
&\qquad\lesssim \sum_{j_1=0}^{n_1}
\biggl\Vert  \mathcal{K}_{j_1}^{[1]}\sum_{n\in\mathbb{Z}}r_{n}(s_2)\eta(2^{-n}\mathcal{L}_2)f \biggr\Vert_{\mathsf{L}^{1}(\boldsymbol{G})} \\
&\qquad= \sum_{j_1=0}^{n_1}
\biggl\Vert  \sum_{n\in\mathbb{Z}}r_{n}(s_2)\eta(2^{-n}\mathcal{L}_2)\mathcal{K}_{j_1}^{[1]} f \biggr\Vert_{\mathsf{L}^{1}(\boldsymbol{G})}\\
&\qquad\lesssim \sum_{j_1=0}^{n_1}\sum_{j_2=0}^{n_2}
 \left\Vert  \mathcal{K}_{j_1}^{[1]} \otimes \mathcal{K}_{j_2}^{[2]} f \right\Vert_{\mathsf{L}^{1}(\boldsymbol{G})} ,
\end{align*}
because operators involving convolutions (even with distributions) on $G_1$ and operators involving convolutions (even with distributions) on $G_2$ commute.
By Khinchin's inequality (see, for example, \cite[Appendix C.5]{Gra}), this implies that
\begin{align*}
& \biggl\Vert  \biggl( \sum_{n\in\mathbb{Z}}\sum_{m\in\mathbb{Z}} \left| \eta(2^{-m}\mathcal{L}_1)\eta(2^{-n}\mathcal{L}_2)f \right| ^{2} \biggr) ^{1/2} \biggr\Vert_{\mathsf{L}^{1}(\boldsymbol{G})}\\
&\qquad \lesssim \iint_{[0,1]\times[0,1]}
\biggl\Vert  \sum_{m\in\mathbb{Z}}r_{m}(s_1)\eta(2^{-m}\mathcal{L}_1)
 \sum_{n\in\mathbb{Z}}r_{n}(s_2)\eta(2^{-n}\mathcal{L}_2)f \biggr\Vert_{\mathsf{L}^{1}(\boldsymbol{G})} \,\mathrm{d}s_1\,\mathrm{d}s_2\\
&\qquad\lesssim \sum_{j_1=0}^{n_1}\sum_{j_2=0}^{n_2}\left\Vert  \mathcal{K}_{j_1}^{[1]} \otimes \mathcal{K}_{j_2}^{[2]} f \right\Vert_{\mathsf{L}^{1}(\boldsymbol{G})}.
\end{align*}
This ends the proof of Proposition \ref{SIO}.
\end{proof}

\begin{remark}
It is straightforward to extend this result to products of more than two factors.
It is just a matter of repeating the randomisation argument more times.
\end{remark}

\section{Maximal function characterisation}

In this section, we prove Theorem \ref{thm max}.
As we have already noted, in the one-parameter setting, a common strategy for showing that maximal functions characterise the Hardy space is to use atoms; this strategy does not work in the multi-parameter case.
Merryfield \cite{KM} managed to extend the one-parameter result to the product space $\mathbb{R}^m\times \mathbb{R}^n$; his new tool is the solution of a particular Cauchy--Riemann type equation.
However, it is not clear whether there is a version of his lemma on spaces of homogeneous type, or even just on homogeneous groups.
In \cite{CCLLO}, a new method, using Poisson kernels and harmonic functions, was introduced to characterise flag Hardy space on Heisenberg groups by maximal functions.
Here we extend this method to product groups.

\subsection{The maximal function Hardy spaces}

Recall that $\Gamma^\beta(\boldsymbol{g})$ denotes the cone with vertex $\boldsymbol{g}$ and angle $\beta$:
\[
\Gamma ^{\beta}(\boldsymbol{g})
:= \{(\boldsymbol{h} ,\boldsymbol{t})\in \boldsymbol{G} \times \boldsymbol{T}:
\rho_i (g_i ,h_i ) \leq \beta t_i \text{ when $ i = 1,2$} \}.
\]

\begin{definition}
Take functions $\zeta^{[i]} \in \mathsf{M}(G_i)$, and define the maximal operator $\mathcal{M}_{\zeta,\beta}$ by
\[
\mathcal{M}_{\zeta,\beta}(f)(\boldsymbol{g})
:=\sup_{\boldsymbol{h} \in \Gamma^\beta(\boldsymbol{g})} \left| f \ast \zeta_{\boldsymbol{t}}(\boldsymbol{h}) \right|
\qquad\forall \boldsymbol{g} \in \boldsymbol{G} \quad\forall f \in \mathsf{L}^1(\boldsymbol{G}).
\]
The Hardy space $\mathsf{H}^1_{\max,\zeta,\beta}(\boldsymbol{G})$ is defined to be the space
\[
\{f\in \mathsf{L}^1(\boldsymbol{G}): \left\Vert  \mathcal{M}_{\zeta,\beta} f \right\Vert_{\mathsf{L}^{1}(\boldsymbol{G})}<\infty\}
\]
equipped with the norm
\[
\Vert f \Vert_{\mathsf{H}^1_{\max,\zeta,\beta}(\boldsymbol{G})}
:=\left\Vert  \mathcal{M}_{\zeta,\beta} f \right\Vert_{\mathsf{L}^{1}(\boldsymbol{G})},
\]
for the Hardy space, we require that the integrals of the $\zeta^{[i]}$ are nonzero.
\end{definition}

It is obvious that $\left\Vert  \mathcal{M}_{\zeta,\gamma} f \right\Vert_{\mathsf{L}^1(\boldsymbol{G})}
\leq \left\Vert   \mathcal{M}_{\zeta,\beta} f \right\Vert_{\mathsf{L}^1(\boldsymbol{G})}$ when $\gamma \leq \beta$.

We treat the cases when $\beta > 0$ and when $\beta = 0$ together.
In the important special cases when the $\zeta^{[i]}$ coincide with the Poisson or heat kernels, we have additional tools, such as Harnack or Moser inequalities.
The possibly less well known Plancherel--P\'olya inequality provides similar results for more general $\zeta$.

To characterise $\mathsf{H}^1(\boldsymbol{G})$ by maximal functions, we are going to show two results.

\begin{proposition}\label{prop poisson L-P and max}
If $\beta$ is large enough, then the spaces $\mathsf{H}^1_{\mathrm{sq},P,1}(\boldsymbol{G})$ and $\mathsf{H}^1_{\max,P,\beta}(\boldsymbol{G})$ coincide and have equivalent norms.
\end{proposition}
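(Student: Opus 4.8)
The plan is to establish the two distributional inequalities
\[
\bigl\Vert \mathcal{S}_{P,1}(f) \bigr\Vert_{\mathsf{L}^1(\boldsymbol{G})}
\lesssim \bigl\Vert \mathcal{M}_{P,\beta}(f) \bigr\Vert_{\mathsf{L}^1(\boldsymbol{G})}
\qquad\text{and}\qquad
\bigl\Vert \mathcal{M}_{P,\beta}(f) \bigr\Vert_{\mathsf{L}^1(\boldsymbol{G})}
\lesssim \bigl\Vert \mathcal{S}_{P,1}(f) \bigr\Vert_{\mathsf{L}^1(\boldsymbol{G})}
\]
by good-$\lambda$ arguments, using throughout that $u(\boldsymbol{g},\boldsymbol{t}):=f\ast P_{\boldsymbol{t}}(\boldsymbol{g})$ is \emph{separately harmonic}, that is, annihilated by $\partial_{t_i}^2+\sum_j(\mathcal{X}^{[i]}_j)^2$ in each pair $(g_i,t_i)$. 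Since we are in the Poisson case, we may invoke the Harnack and Moser inequalities and the kernel bounds of Lemma~\ref{lem:heat-pois-estim}; and by Proposition~\ref{prop equiv Hardy via L-P} together with cone-angle independence (Proposition~\ref{prop equiv Hardy different beta}) we are free to compute $\mathcal{S}_{P,1}$ with the gradient area function built from $t_1t_2\,\slashed{\nabla}_1\slashed{\nabla}_2 u$ and at whatever aperture is convenient. The aperture $\beta$ is enlarged whenever the geometry of the sawtooth regions demands it; this is the source of the hypothesis that $\beta$ be large.

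For the first inequality (area dominated by the maximal function) I would prove the good-$\lambda$ estimate
\[
\bigl| \{\mathcal{S}_{P,1}(f)>2\lambda,\ \mathcal{M}_{P,\beta}(f)\le\gamma\lambda\} \bigr|
\lesssim \gamma^{2}\, \bigl|\{\mathcal{S}_{P,1}(f)>\lambda\}\bigr|.
\]
Writing $\Omega=\{\mathcal{S}_{P,1}(f)>\lambda\}$ and $V=\{\mathcal{M}_{P,\beta}(f)\le\gamma\lambda\}$, the point is that $|u|\le\gamma\lambda$ on every cone $\Gamma^{\beta}(\boldsymbol{g})$ with $\boldsymbol{g}\in V$, hence on the sawtooth $W^{1}(\Omega\cap V)$. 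A Fubini argument bounds $\int_{\Omega\cap V}\mathcal{S}_{P,1}(f)^{2}$ by the energy $\iint_{W^{1}(\Omega\cap V)}|t_1t_2\,\slashed{\nabla}_1\slashed{\nabla}_2 u|^{2}\,\mathrm{d}\boldsymbol{h}\,\mathrm{d}\boldsymbol{t}/\boldsymbol{t}$, and a two-fold application of the Green/Caccioppoli identity $\Delta_i(u^{2})=2|\slashed{\nabla}_i u|^{2}$, once in each factor, bounds this energy by $(\gamma\lambda)^{2}|\Omega\cap V|$. Chebyshev's inequality and $|\Omega\cap V|\le|\Omega|$ then give the good-$\lambda$ estimate, and integrating in $\lambda$ with $\gamma$ fixed small yields the claim.

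For the second inequality (maximal function dominated by the area function) I would prove the companion estimate
\[
\bigl| \{\mathcal{M}_{P,\beta}(f)>2\lambda,\ \mathcal{S}_{P,1}(f)\le\gamma\lambda\} \bigr|
\lesssim \gamma\,\bigl|\{\mathcal{M}_{P,\beta'}(f)>\lambda\}\bigr| ,
\]
with $\beta'$ a slightly larger aperture. Here I would replace the Whitney decomposition of $\Omega=\{\mathcal{M}_{P,\beta'}(f)>\lambda\}$, which is unavailable in the product setting, by the enlargement $\widetilde{\Omega}$ and the maximal pseudodyadic rectangles $\mathscr{M}(\Omega)$, controlling $|\widetilde{\Omega}|\lesssim|\Omega|$ through the $\mathsf{L}\log\mathsf{L}\to\mathsf{L}^{1,\infty}$ bound for the strong maximal operator. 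On each such rectangle I would split $u$ into a large-scale part, whose size at the apices of the cones is controlled by $\mathcal{M}_{P,\beta'}(f)\le\lambda$ via the maximum principle and Harnack, and a small-scale part, whose nontangential supremum is reconstructed from its mixed gradient $t_1t_2\,\slashed{\nabla}_1\slashed{\nabla}_2 u$ and so bounded by $\mathcal{S}_{P,1}(f)\le\gamma\lambda$; the boundary behaviour as $t_i\to0$ (Corollary~\ref{cor:Lebesgue} and \eqref{eq:chi-star-Q}) and the decay as $t_i\to\infty$ (from $f\in\mathsf{L}^2(\boldsymbol{G})$) are routine.

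The main obstacle is precisely the one flagged in the introduction: the level sets are arbitrary open subsets of $\boldsymbol{G}$ with genuinely multiparameter geometry and admit no Whitney or Calder\'on--Zygmund cube decomposition, so the stopping-time organisation of the classical good-$\lambda$ arguments must be rebuilt from the strong maximal operator, the enlargements $\widetilde{\Omega}$ and $V^{*}$, the maximal rectangles $\mathscr{M}(\Omega)$, and the sawtooth transfer of Lemma~\ref{sfsfsf}. Compounding this, separate (rather than joint) harmonicity forces both the Green identity of the first inequality and the gradient reconstruction of the second to be carried out one factor at a time, so that the mixed derivative and the cross boundary terms must be controlled simultaneously in the two variables while the aperture and scale bookkeeping is kept consistent across the two iterations. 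I expect the large-scale/small-scale splitting in the second inequality, together with this two-variable bookkeeping, to be the technically heaviest part.
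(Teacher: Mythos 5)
Your overall architecture for the hard direction, $\Vert \mathcal{S}_{P,1}f\Vert_{\mathsf{L}^1} \lesssim \Vert \mathcal{M}_{P,\beta}f\Vert_{\mathsf{L}^1}$, matches the paper's (a distributional inequality at each level $\alpha$, a sawtooth over the good set, and the identity $\slashed{\mathcal{L}}_1\slashed{\mathcal{L}}_2(u^2)=4|\slashed{\nabla}_1\slashed{\nabla}_2 u|^2$ from separate harmonicity). But the sentence ``a two-fold application of the Green/Caccioppoli identity \ldots bounds this energy by $(\gamma\lambda)^2|\Omega\cap V|$'' conceals the actual difficulty: integrating by parts over the product sawtooth $W^{1}(\Omega\cap V)\subset\boldsymbol{G}\times\boldsymbol{T}$ produces lateral boundary terms with no usable geometry, and you give no mechanism for controlling them. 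The paper's device is to avoid the sawtooth boundary entirely: it forms the separately harmonic function $\mathrm{H}_{\boldsymbol{t}}=\chi_{L_{\beta}(\alpha)}\ast P_{\boldsymbol{t}}$ and the smooth cutoff $\eta(\mathrm{H}_{\boldsymbol{t}})$, which equals $1$ on the inner sawtooth and $0$ off the outer one (this is precisely where ``$\beta$ large'' is used), integrates by parts over all of $\boldsymbol{G}\times\boldsymbol{T}$, and then the only surviving error terms carry a factor $|\slashed{\nabla}\mathrm{H}_{\boldsymbol{t}}|^2=|\chi_{L_{\beta}(\alpha)^c}\ast\slashed{\nabla}P_{\boldsymbol{t}}|^2$, which Littlewood--Paley theory converts into $\alpha^2|L_{\beta}(\alpha)^c|$. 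Without this (or an equivalent substitute) your energy estimate is not established, and this is the heart of the proof rather than a routine step.

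For the other direction, $\Vert \mathcal{M}_{P,\beta}f\Vert_{\mathsf{L}^1} \lesssim \Vert \mathcal{S}_{P,1}f\Vert_{\mathsf{L}^1}$, you have chosen a genuinely different and, as written, unworkable route. The paper treats this as the easy direction: since $\mathsf{H}^1_{\mathrm{sq},P,1}(\boldsymbol{G})=\mathsf{H}^1_{\mathrm{atom}}(\boldsymbol{G})$ is already known, it suffices to prove $\Vert\mathcal{M}_{P,\beta}(a)\Vert_{\mathsf{L}^1}\lesssim 1$ for a single product atom, which follows from the three-region decomposition of $\boldsymbol{G}\setminus S$ and Journ\'e's covering lemma. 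Your companion good-$\lambda$ estimate instead requires a pointwise ``reconstruction of the nontangential supremum from the mixed gradient $t_1t_2\slashed{\nabla}_1\slashed{\nabla}_2u$'' bounded by $\mathcal{S}_{P,1}(f)$; you supply no mechanism for dominating a supremum by an $\mathsf{L}^2$ area average in the product setting. This is essentially the Merryfield-type machinery that the paper explicitly states is \emph{not} known to extend to homogeneous groups, and the natural sub-mean-value route is obstructed because the horizontal components of $\slashed{\nabla}_i u$ are not $\slashed{\mathcal{L}}_i$-harmonic on a stratified group ($\mathcal{X}_j$ does not commute with $\mathcal{L}$). I recommend replacing this half of your argument by the atomic-decomposition and Journ\'e's-lemma argument.
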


\begin{proposition}\label{prop equiv max}
For different choices of $\varphi$ and $\zeta$ and different choices of $\beta$ and $\gamma$, the spaces $\mathsf{H}^1_{\max,\varphi,\beta}(\boldsymbol{G})$ and $\mathsf{H}^1_{\max,\zeta,\gamma}(\boldsymbol{G})$ coincide and have equivalent norms.
\end{proposition}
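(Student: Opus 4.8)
The plan is to prove the equivalence in two separate steps --- independence of the cone aperture for a fixed molecule, and independence of the molecule for a fixed aperture --- and then to chain them, using transitivity of the relation ``coincide with equivalent norms'' and the monotonicity $\|\mathcal{M}_{\zeta,\gamma}f\|_{\mathsf{L}^1(\boldsymbol{G})}\le\|\mathcal{M}_{\zeta,\beta}f\|_{\mathsf{L}^1(\boldsymbol{G})}$ for $\gamma\le\beta$ noted after the definition. As the reference object I would take the Poisson kernel with a large aperture, for which Proposition \ref{prop poisson L-P and max} together with Proposition \ref{prop equiv Hardy via L-P} already identifies $\mathsf{H}^1_{\max,P,\beta}(\boldsymbol{G})$ with $\mathsf{H}^1(\boldsymbol{G})$.

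For the aperture step I would adapt the density argument of Proposition \ref{prop equiv Hardy different beta}. Fix a molecule $\zeta$ and apertures $0<\gamma<\beta$; only the bound $\|\mathcal{M}_{\zeta,\beta}f\|_{\mathsf{L}^1(\boldsymbol{G})}\lesssim\|\mathcal{M}_{\zeta,\gamma}f\|_{\mathsf{L}^1(\boldsymbol{G})}$ is at issue. For $\lambda>0$ set $V=\{\boldsymbol{g}:\mathcal{M}_{\zeta,\gamma}f(\boldsymbol{g})\le\lambda\}$, put $\theta=1-c_0(\gamma/(\beta+\gamma))^{Q_1+Q_2}$ with $c_0\le1/4$, and let $V^{*}$ be the set of points of global $\theta$-density of $V$. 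The geometric point is that if $\boldsymbol{g}\in V^{*}$ and $(\boldsymbol{h},\boldsymbol{t})\in\Gamma^{\beta}(\boldsymbol{g})$, then $R(\boldsymbol{h},\gamma\boldsymbol{t})\subseteq R(\boldsymbol{g},(\beta+\gamma)\boldsymbol{t})$, so the density lower bound forces $V\cap R(\boldsymbol{h},\gamma\boldsymbol{t})\ne\emptyset$; any witness $\boldsymbol{g}'$ satisfies $(\boldsymbol{h},\boldsymbol{t})\in\Gamma^{\gamma}(\boldsymbol{g}')$, so $|f\ast\zeta_{\boldsymbol{t}}(\boldsymbol{h})|\le\mathcal{M}_{\zeta,\gamma}f(\boldsymbol{g}')\le\lambda$. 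Hence $V^{*}\subseteq\{\mathcal{M}_{\zeta,\beta}f\le\lambda\}$, so $\{\mathcal{M}_{\zeta,\beta}f>\lambda\}\subseteq(V^{*})^{c}$, and the $\mathsf{L}\log\mathsf{L}\to\mathsf{L}^{1,\infty}$ bound for $\mathcal{M}_{S}$ gives $|(V^{*})^{c}|\le c_{\theta}|\{\mathcal{M}_{\zeta,\gamma}f>\lambda\}|$; integrating in $\lambda$ finishes the step for $0<\gamma<\beta$. The vertical case $\gamma=0$ is not reached by this argument, and I would defer it to the reference-case analysis below.

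For the molecule step I would show that every $\mathsf{H}^1_{\max,\zeta,\beta}(\boldsymbol{G})$ coincides with $\mathsf{H}^1(\boldsymbol{G})$. The inclusion $\mathsf{H}^1(\boldsymbol{G})\subseteq\mathsf{H}^1_{\max,\zeta,\beta}(\boldsymbol{G})$ is the boundedness of $\mathcal{M}_{\zeta,\beta}$ from $\mathsf{H}^1$ to $\mathsf{L}^1$; using the atomic decomposition supplied by Proposition \ref{prop equiv Hardy via L-P} I would reduce to a single product atom $a$ associated to an open set $U$, and estimate $\mathcal{M}_{\zeta,\beta}a$ on $\widetilde{U}$ by the $\mathsf{L}^2$ boundedness of $\mathcal{M}_{\zeta,\beta}$ together with $\|a\|_{\mathsf{L}^2(\boldsymbol{G})}\le|\widetilde{U}|^{-1/2}$ and the Cauchy--Schwarz inequality, and on the complement of $\widetilde{U}$ by playing the product cancellation \eqref{eq:product-atom-cancel} of the particles $a_R$ against the decay and smoothness \eqref{eq:molecule} of $\zeta$. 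For the reverse inclusion I would pass from the maximal function to a square function: because $\int_{G_i}\zeta^{[i]}$ is a nonzero constant unchanged by dilation, the differences $\psi^{[i]}_{t_i}:=\zeta^{[i]}_{t_i}-\zeta^{[i]}_{2t_i}$ are mean-zero molecules, and $f\ast\psi_{\boldsymbol{t}}$ is a fixed combination of the four values of $f\ast\zeta$ at the dyadically related scales. I would aim to bound $\|\mathcal{S}_{\psi,\beta'}f\|_{\mathsf{L}^1(\boldsymbol{G})}$ by $\|\mathcal{M}_{\zeta,\beta}f\|_{\mathsf{L}^1(\boldsymbol{G})}$, after which the Plancherel--P\'olya inequality replaces the $\psi$-square function by the universal $\mathsf{H}^1(\boldsymbol{G})$-norm.

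The crux is this last maximal-to-square estimate, and it is also where the vertical case $\gamma=0$ must be settled. A pointwise bound $|f\ast\psi_{\boldsymbol{t}}(\boldsymbol{h})|\lesssim\mathcal{M}_{\zeta,\beta}f(\boldsymbol{g})$ on the cone is easy but useless on its own, since integrating it against $\mathrm{d}\boldsymbol{t}/\boldsymbol{t}$ over all scales diverges; the oscillation built into the mean-zero differences has to be exploited, and, as stressed in the introduction, the multiparameter geometry rules out the one-parameter Calder\'on--Zygmund and Whitney reductions. Following \cite{CCLLO}, I expect to treat the reference case of the Poisson and heat kernels with the harmonic-function toolkit --- Harnack's inequality, Moser's estimate, and a Green's-identity comparison of the area integral with the nontangential maximal function --- which is the content carried by Proposition \ref{prop poisson L-P and max}, and then to transport that comparison to an arbitrary molecule $\zeta$ through the Plancherel--P\'olya inequality.
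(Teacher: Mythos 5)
Your aperture step is fine for $0<\gamma<\beta$ (the global-$\theta$-density argument with $\theta=1-c_0(\gamma/(\beta+\gamma))^{Q_1+Q_2}$ is correct and is in fact a cleaner route to angle-independence than the paper takes), and the inclusion $\mathsf{H}^1(\boldsymbol{G})\subseteq\mathsf{H}^1_{\max,\zeta,\beta}(\boldsymbol{G})$ via atoms is standard. The genuine gap is in the hard direction of your molecule step, which you yourself flag as the crux and then only gesture at. To show $\mathsf{H}^1_{\max,\zeta,\gamma}(\boldsymbol{G})\subseteq\mathsf{H}^1(\boldsymbol{G})$ for an \emph{arbitrary} molecule $\zeta$ with nonzero integrals, you must dominate something known to control $\mathsf{H}^1$ (the Poisson maximal function, or a square function) by $\mathcal{M}_{\zeta,\gamma}f$ alone. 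Your plan is to establish the Poisson case by harmonicity (that is Proposition \ref{prop poisson L-P and max}) and then ``transport'' to general $\zeta$ via the Plancherel--P\'olya inequality. But Plancherel--P\'olya, as used in this paper, compares \emph{square functions} built from different \emph{mean-zero} kernels; it does not convert control of a single maximal function $\mathcal{M}_{\zeta,\gamma}f$, with $\zeta$ non-cancellative, into control of $\mathcal{M}_{P,\beta}f$ or of $\mathcal{S}_{\psi,\beta'}f$. As written, the only way to replace $\mathcal{M}_{P,\beta}$ by $\mathcal{M}_{\zeta,\gamma}$ on the right-hand side of your estimates is to invoke the proposition you are trying to prove, so the argument is circular at exactly the decisive point. (Your deferred vertical case $\gamma=0$ falls into the same hole.)

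The missing ingredient is the content of the paper's Claims 1--3: using the discrete Calder\'on reproducing formula (Theorem \ref{dcrf-22}) with the sampling points $\sigma(R)$ chosen to \emph{minimise} $|f\ast[\varphi]_{\boldsymbol{\ell}(R)}|$ over $\bar R$, together with the almost-orthogonality estimates for $\tilde\varphi_R\ast\psi_{\boldsymbol{t}}$ and a $\theta$-power trick, one proves the pointwise domination
\[
\mathcal{M}_{\zeta,0}f(\boldsymbol{g})
\lesssim \Bigl(\mathcal{M}_S\bigl(|\mathcal{M}_{\varphi,0}(f)|^{\theta}\bigr)(\boldsymbol{g})\Bigr)^{1/\theta}
\]
for a single fixed $\varphi$ of nonzero integral, uniformly over all $\zeta\in\mathsf{F}(\boldsymbol{G})$ (after splitting $\zeta_i=c_i\varphi_i+\psi_i$ with $\psi_i$ mean-zero). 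The $\mathsf{L}^{1/\theta}$-boundedness of $\mathcal{M}_S$ then gives $\Vert\mathcal{G}(f)\Vert_{\mathsf{L}^1(\boldsymbol{G})}\lesssim\Vert\mathcal{M}_{\varphi,0}(f)\Vert_{\mathsf{L}^1(\boldsymbol{G})}$ for the grand maximal function $\mathcal{G}$, which simultaneously handles all kernels and all apertures (including $\gamma=0$), since $\mathcal{M}_{\zeta,\beta}f\lesssim_\beta\mathcal{G}(f)$ by writing the nontangential supremum as a vertical supremum over right-translates of the kernel. This reproducing-formula mechanism, not Plancherel--P\'olya, is what closes the loop; without it (or an equivalent substitute) your proof does not establish the proposition.
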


Combining the above two results with Proposition \ref{prop equiv Hardy different beta} proves Theorem \ref{thm max}.

\subsection{Part 1 of the proof of Proposition \ref{prop poisson L-P and max}}

Evidently $\mathsf{H}^1_{\mathrm{sq},P,1}(G) \subseteq\mathsf{H}^1_{\max,P,\beta}(G)$, and
\begin{align}\label{Nf bd by Sf}
\left\Vert  \mathcal{M}_{P,\beta} f \right\Vert_{\mathsf{L}^1(\boldsymbol{G}) }
\lesssim \left\Vert   \mathcal{S}_{P,1} f \right\Vert_{\mathsf{L}^1(\boldsymbol{G}) }.
\end{align}
Indeed, when $\left\Vert   \mathcal{S}_{P,\beta} f \right\Vert_{\mathsf{L}^1(\boldsymbol{G}) }<\infty$, then by \cite{CDLWY}, we may write $f= \sum_j\lambda_j a_j$,
where each $a_j$ is an atom, and $\sum_j\left| \lambda_j \right| \lesssim \left\Vert   \mathcal{S}_{P,\beta} f \right\Vert_{\mathsf{L}^1(\boldsymbol{G}) }$.
Thus, to prove \eqref{Nf bd by Sf}, it suffices to verify that
\[
\left\Vert  \mathcal{M}_{P,\beta} (a) \right\Vert_{\mathsf{L}^1(\boldsymbol{G}) }\lesssim1
\]
for each product atom $a$ as in Section 3.1.
The Poisson kernels $P^{[1]}_1$ and $P^{[2]}_1$ satisfy the standard decay and smoothness conditions \eqref{eq:molecule}, and the atom $a$ satisfies the standard product cancellation condition \eqref{eq:product-atom-cancel}.
Then the desired estimate of $\left\Vert  \mathcal{M}_{P,\beta} (a) \right\Vert_{\mathsf{L}^1(\boldsymbol{G}) }$ follows from standard product arguments and Journ\'e's covering lemma.

We provide a brief outline of the proof here for completeness and for the reader's convenience.
It suffices to show that, for each product atom $a$,
\[
\left\Vert  \mathcal{M}_{P,\beta} (a) \right\Vert_{\mathsf{L}^1(\boldsymbol{G}) }\lesssim1 .
\]
From Section 3.1, we may write $a$ on $\boldsymbol{G} $ as a sum $\sum_{R\in \mathscr{M}(U)}a_R$, where $U$ is an open subset of $\boldsymbol{G}$ of finite measure, and the particles $a_R$ satisfy support and size conditions.

There are two steps to the proof:  first, we find some small positive $\epsilon$, such that for any pseudodyadic rectangle $S$ containing the pseudodyadic rectangle $R$,
\begin{equation}\label{eq:good-estimate}
\int_{\boldsymbol{G} \setminus S} | \mathcal{M}_{P,\beta} (a_R)(\boldsymbol{g})| \,\mathrm{d}\boldsymbol{g}
\lesssim
\left( \frac{\ell_1(R)}{\ell_1(S)} + \frac{\ell_2(R)}{\ell_2(S)}\right)^\epsilon |R|^{1/2} \left\Vert a_R\right\Vert_{\mathsf{L}^ 2(\boldsymbol{G})} ,
\end{equation}
for all particles $a_R$ associated to $R$.
The estimation of this expression may be achieved by writing $S$ as $Q_1 \times Q_2$ and breaking up $\boldsymbol{G}\setminus S$ into the three regions $(Q_1)^c \times (Q_2)^c$, $(Q_1)^c \times Q_2$ and $Q_1 \times (Q_2)^c$.
In the first region we use the support and cancellation conditions on $a_R$ to estimate $a_R * P_t(\boldsymbol{g})$, and show that
\[
\int_{(Q_1)^c \times (Q_2)^c} | \mathcal{M}_{P,\beta} (a_R)(\boldsymbol{g})| \,\mathrm{d}\boldsymbol{g}
\lesssim
\left( \frac{\ell_1(R)}{\ell_1(S)} \cdot \frac{\ell_2(R)}{\ell_2(S)} \right)^\epsilon |R|^{1/2} \left\Vert a_R\right\Vert_{\mathsf{L}^ 2(\boldsymbol{G})}
;
\]
in the second region we use the support and cancellation conditions to control the $g_1$ variable and H\"older's inequality and a Littlewood--Paley argument to control the $g_2$ variable, and show that
\[
\int_{(Q_1)^c \times Q_2} | \mathcal{M}_{P,\beta} (a_R)(\boldsymbol{g})| \,\mathrm{d}\boldsymbol{g}
\lesssim
\left( \frac{\ell_1(R)}{\ell_1(S)} \right)^\epsilon |R|^{1/2} \left\Vert a_R\right\Vert_{\mathsf{L}^ 2(\boldsymbol{G})}
;
\]
in the third region, we argue similarly, but with the roles of the variables reversed.

Once \eqref{eq:good-estimate} is proved, we apply Journ\'e's lemma to obtain an estimate for atoms rather than particles.
Let $a$ be an atom associated to the open set $U$, and write $a = \sum_{R \in m(U)} a_R$.
Define
\[
V = \left\{ \boldsymbol{g} \in \boldsymbol{G} : \mathcal{M}_{S} \chi_{U}(\boldsymbol{g}) > {1}/{4} \right\}
\qquad\text{and}\qquad
W = \left\{ \boldsymbol{g} \in \boldsymbol{G} : \mathcal{M}_{S} \chi_{V}(\boldsymbol{g}) > {1}/{4} \right\}.
\]
Then $|W| \lesssim |V| \lesssim |U|$.
Further,
\begin{align*}
&\int_{\boldsymbol{G}} | \mathcal{M}_{P,\beta} (a)(\boldsymbol{g})| \,\mathrm{d}\boldsymbol{g} \\
&\qquad= \int_{W} | \mathcal{M}_{P,\beta} (a)(\boldsymbol{g})| \,\mathrm{d}\boldsymbol{g}
+ \int_{\boldsymbol{G} \setminus W} | \mathcal{M}_{P,\beta} (a)(\boldsymbol{g})| \,\mathrm{d}\boldsymbol{g} \\
&\qquad\leq |W|^{1/2} \left( \int_{W} |\mathcal{M}_{P,\beta} (a)(\boldsymbol{g})| \,\mathrm{d}\boldsymbol{g} \right)^{1/2}
+ \sum_{R \in m(U)} \int_{\boldsymbol{G} \setminus W} | \mathcal{M}_{P,\beta} (a_R)(\boldsymbol{g})| \,\mathrm{d}\boldsymbol{g}.
\end{align*}
The first term is estimated using the $L^2$-boundedness of $\mathcal{M}_{P,\beta}$:
\[
|W|^{1/2} \left( \int_{W} |\mathcal{M}_{P,\beta} (a)(\boldsymbol{g})| \,\mathrm{d}\boldsymbol{g} \right)^{1/2}
\lesssim |W|^{1/2} \left\Vert a\right\Vert_{\mathsf{L}^ 2(\boldsymbol{G})}
\lesssim |U|^{1/2} |U|^{-1/2} = 1.
\]
The second term is estimates using Journ\'e's lemma, namely, for each $R \in m(U)$, we can find $S \in m(W)$ (depending on $R$) such that $R \subseteq S$ and
\[
\sum_{r \in m(R)} \left( \frac{\ell_1(R)}{\ell_1(S)} + \frac{\ell_2(R)}{\ell_2(S)}\right)^\epsilon |R| \lesssim |U|.
\]
Then from \eqref{eq:good-estimate}, H\"older's inequality, Journ\'e's lemma, and the definitions,
\begin{align*}
&\sum_{R \in m(U)} \int_{\boldsymbol{G} \setminus W} | \mathcal{M}_{P,\beta} (a_R)(\boldsymbol{g})| \,\mathrm{d}\boldsymbol{g} \\
&\qquad\leq \sum_{R \in m(U)} \int_{\boldsymbol{G} \setminus S} | \mathcal{M}_{P,\beta} (a_R)(\boldsymbol{g})| \,\mathrm{d}\boldsymbol{g} \\
&\qquad\lesssim \sum_{R \in m(U)} \left( \frac{\ell_1(R)}{\ell_1(S)} + \frac{\ell_2(R)}{\ell_2(S)}\right)^\epsilon |R|^{1/2} \left\Vert a_R\right\Vert_{\mathsf{L}^ 2(\boldsymbol{G})} \\
&\qquad\lesssim \left( \sum_{R \in m(U)} \left( \frac{\ell_1(R)}{\ell_1(S)} + \frac{\ell_2(R)}{\ell_2(S)}\right)^{2\epsilon} |R|\right) ^{1/2}
\left( \sum_{R \in m(U)}  \left\Vert a_R\right\Vert_{\mathsf{L}^ 2(\boldsymbol{G})}^2 \right) ^{1/2} \\
&\qquad\lesssim |U|^{1/2} |U|^{-1/2} = 1.
\end{align*}

Note that proving this result of products of more than two factors seems to be nontrivial; Journ\'e's lemma requires us to have more than one ``improving factor'' $\ell_j(R)/\ell_j(S)$.

It remains to prove the opposite inclusion: $\mathsf{H}^1_{\max,P,\beta}(\boldsymbol{G}) \subseteq \mathsf{H}^1_{\mathrm{sq},P,1}(\boldsymbol{G})$, and
\begin{align*}
\left\Vert   \mathcal{S}_{P,1} f \right\Vert_{\mathsf{L}^1(\boldsymbol{G}) }
\lesssim \left\Vert  \mathcal{M}_{P,\beta} f \right\Vert_{\mathsf{L}^1(\boldsymbol{G}) }.
\end{align*}

We first treat a stratified group, and then a product of stratified groups.

\subsection{Part 2 of the proof of Proposition \ref{prop poisson L-P and max}}
\label{ssec:LP-and-max}
In this part of the proof, we prove \eqref{L-P bounded by max-1} for a stratified group ${G}$ with no product structure.
This simplifies the notation.
Later the group ${G}$ will be one of the factors of the product group $\boldsymbol{G}$ that we wish to consider.

We are going to use integration by parts, and need to know about the behaviour of certain harmonic functions on ${G} \times \mathbb{R}_{+}$ at the boundaries of this region.
Suppose that $f \in \mathsf{L}^{p}({G})$, where $1 \leq p \leq \infty$, and consider the Poisson integral $f \ast P_{t}(g)$ and $f \ast Q_{t}(g)$, where $g \in {G}$ and $t \in \mathbb{R}_{+}$, whose behaviour as $t \to 0$ is discussed in Corollary \ref{cor:Lebesgue}.

From Lemma \ref{lem:heat-pois-estim}, if $f \in \mathsf{L}^\infty({G})$, then $f \ast P_{t}$ and $f \ast Q_{t}$ are bounded in $\mathsf{L}^\infty({G})$ as $t \to \infty$.
If $f \in \mathsf{H}^1_{\max,P,\gamma}({G})$, then $\left\Vert  f_{1/s} \ast P_{1}\right\Vert_{\mathsf{H}^1_{\max,P,\gamma}({G})}$ is bounded for all $s>0$.
Since
\[
f_{1/s} \ast P_{1} \to \left( \int_{{G}} f(g) \,\mathrm{d}g\right)  P_{1}
\qquad\text{as $s \to \infty$}
\]
in $\mathsf{L}^1({G})$ and $\sup _{t > 1} P_{t}(\cdot) \notin \mathsf{L}^1(G)$ so $P_{1} \notin \mathsf{H}^1_{\max,P,\gamma}({G})$, we see that $f$ has mean $0$.
Thus
\[
\begin{aligned}
&\left\Vert  f \ast P_{t} \right\Vert_{\mathsf{L}^1({G})}
+ \left\Vert  f \ast Q_{t} \right\Vert_{\mathsf{L}^1({G})} \\
&\qquad= \left\Vert  f_{1/t} \ast P_{1} \right\Vert_{\mathsf{L}^1({G})}
+ \left\Vert  f_{1/t} \ast Q_{1} \right\Vert_{\mathsf{L}^1({G})}
 \to 0
\end{aligned}
\]
and
\[
\begin{aligned}
&\left\Vert  f \ast P_{t} \right\Vert_{\mathsf{L}^\infty({G})}
+ \left\Vert  f \ast Q_{t} \right\Vert_{\mathsf{L}^\infty({G})} \\
&\qquad= t^{-Q} \left\Vert  f_{1/t} \ast P_{1} \right\Vert_{\mathsf{L}^\infty({G})}
+ t^{-Q} \left\Vert  f_{1/t} \ast Q_{1} \right\Vert_{\mathsf{L}^\infty({G})}
 \to 0
\end{aligned}
\]
as $t \to \infty$.
This convergence is also pointwise almost everywhere.

\begin{proposition}
Suppose that ${G}$ is a stratified Lie group and $\gamma > 0$.
If $\beta$ is large enough, then
\[
\mathsf{H}^1_{\max,P,\beta}({G})
\subseteq \mathsf{H}^1_{\mathrm{sq},P,\gamma}({G}),
\]
and there is a corresponding norm inequality:
\begin{equation}\label{L-P bounded by max-1}
\Vert f \Vert_{\mathsf{H}^1_{\mathrm{sq},P,\gamma}({G})}
\lesssim \Vert f \Vert_{\mathsf{H}^1_{\max,P,\beta}({G})}
\qquad\forall f \in \mathsf{H}^1_{\max,P,\beta}({G}).
\end{equation}
\end{proposition}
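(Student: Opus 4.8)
The plan is to exploit the fact that the Poisson integral $u(g,t) := f \ast P_{t}(g)$ is harmonic for the operator $\slashed{\Delta} := \partial_{t}^{2} - \mathcal{L}$ on $G \times \mathbb{R}_{+}$. Indeed $\partial_{t}^{2} P_{t} = \mathcal{L} P_{t}$ since $P_{t} = \mathrm{e}^{-t\sqrt{\mathcal{L}}}$, so $\slashed{\Delta} u = 0$ and hence $|\slashed{\nabla} u|^{2} = \tfrac12 \slashed{\Delta}(u^{2})$. Writing $\mathcal{N} := \mathcal{M}_{P,\beta}$ and $\mathcal{S} := \mathcal{S}_{P,\gamma}$, and noting that $f \ast (t\slashed{\nabla}P_{t}) = t\slashed{\nabla}u$, so that $\mathcal{S}(f)(g)^{2} = \iint_{\Gamma^{\gamma}(g)} |\slashed{\nabla}u(h,t)|^{2}\, t^{2}\,\mathrm{d}h\,\mathrm{d}t/(t\,|B(o,\gamma t)|)$, the target inequality \eqref{L-P bounded by max-1} reads $\left\Vert \mathcal{S}f\right\Vert_{\mathsf{L}^{1}(G)} \lesssim \left\Vert \mathcal{N}f\right\Vert_{\mathsf{L}^{1}(G)}$. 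I would deduce this from a good-$\lambda$ reduction to a single level set of $\mathcal{N}f$.

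Fix $\lambda > 0$, set $\Omega := \{\mathcal{N}f > \lambda\}$, an open set with $|\Omega| \leq \lambda^{-1}\left\Vert \mathcal{N}f\right\Vert_{\mathsf{L}^{1}(G)}$, and let $\widetilde{\Omega} := \{\mathcal{M}\chi_{\Omega} > 1/4\}$, so that $|\widetilde{\Omega}| \lesssim |\Omega|$ by the weak-type $(1,1)$ bound for $\mathcal{M}$. Put $F := G \setminus \widetilde{\Omega}$ and consider the sawtooth region $\mathcal{R} := \bigcup_{g\in F}\Gamma^{\gamma}(g)$. The geometric crux is that every $(h,t) \in \mathcal{R}$ lies in $\Gamma^{\beta}(g')$ for some $g' \in \Omega^{c}$: if $(h,t)\in\Gamma^\gamma(g)$ with $g \in F$, then $B(g,Ct)$ is not mostly covered by $\Omega$, so one finds $g'\in\Omega^{c}$ with $\rho(h,g')\lesssim t$, provided $\beta$ is large enough. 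Consequently $|u(h,t)| \leq \mathcal{N}f(g') \leq \lambda$ throughout $\mathcal{R}$; enlarging $\beta$ slightly more makes $u$ bounded by $C\lambda$ on a $\slashed{\Delta}$-ball of radius $\sim t$ about each such $(h,t)$, and the interior gradient estimate for $\slashed{\Delta}$-harmonic functions (the Moser/Harnack estimates available for the Poisson kernel) then yields $|\slashed{\nabla}u(h,t)| \lesssim \lambda/t$ on $\mathcal{R}$.

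With these pointwise bounds, I would first pass from the area function to the sawtooth integral by Fubini: since $\{g \in F : (h,t)\in\Gamma^{\gamma}(g)\} \subseteq B(h,\gamma t)$, the ratio of measures that appears is at most $1/t$, so
\[
\int_{F}\mathcal{S}(f)(g)^{2}\,\mathrm{d}g \leq \iint_{\mathcal{R}}|\slashed{\nabla}u(h,t)|^{2}\, t\,\mathrm{d}h\,\mathrm{d}t .
\]
The main step is then to estimate the right-hand side by Green's theorem applied to $\tfrac12\slashed{\Delta}(u^{2})$ against the weight $t$ on $\mathcal{R}$ (or a smooth approximation of it): integration by parts converts the integral into boundary terms on $\partial\mathcal{R}$, where $|u^{2}|\lesssim\lambda^{2}$ and $t\,|\slashed{\nabla}(u^{2})| = 2t\,|u|\,|\slashed{\nabla}u| \lesssim \lambda^{2}$, while the contributions at $t\to 0$ and $t\to\infty$ vanish by Corollary \ref{cor:Lebesgue} together with the mean-zero decay of $f \ast P_{t}$ and $f\ast Q_{t}$ established just above this proposition. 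Since the cones have bounded aperture, the surface measure of $\partial\mathcal{R}$ is controlled by $|\widetilde{\Omega}| \lesssim |\Omega|$, and this gives $\int_{F} \mathcal{S}(f)^{2} \lesssim \lambda^{2}|\Omega|$.

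Finally I would assemble the $\mathsf{L}^{1}$ bound. Using $|\{\mathcal{S}f>2\lambda\}| \leq |\widetilde{\Omega}| + |\{g\in F:\mathcal{S}f(g)>2\lambda\}|$ and Chebyshev on the second term,
\[
|\{\mathcal{S}f>2\lambda\}| \lesssim |\Omega| + \frac{1}{\lambda^{2}}\int_{F}\mathcal{S}(f)^{2} \lesssim |\Omega| = |\{\mathcal{N}f>\lambda\}| ,
\]
and integrating in $\lambda$ yields $\left\Vert\mathcal{S}f\right\Vert_{\mathsf{L}^{1}(G)} \lesssim \left\Vert\mathcal{N}f\right\Vert_{\mathsf{L}^{1}(G)}$, which is \eqref{L-P bounded by max-1}. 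The delicate point, and the step I expect to demand the most care, is the Green's theorem computation on the sawtooth: justifying the integration by parts on the non-smooth region $\mathcal{R}$, controlling the surface measure of $\partial\mathcal{R}$, and checking that the boundary contributions at the two ends of the $t$-axis genuinely vanish. The hypothesis that $\beta$ be large enough is used precisely to secure the pointwise bounds on $u$ and $\slashed{\nabla}u$ on all of $\mathcal{R}$, including the surrounding balls required for the gradient estimate.
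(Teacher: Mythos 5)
Your overall strategy (harmonicity of $u = f \ast P_t$, level sets of the maximal function, a sawtooth region, integration by parts, and a distributional inequality integrated in $\lambda$) is the right one, but two steps do not survive scrutiny. First, a concrete error: the boundary contribution at $t \to 0$ in your Green's theorem computation does \emph{not} vanish. Writing $v = u^2$, one has $\int_0^\infty \partial_t^2 v \cdot t \,\mathrm{d}t = [t\partial_t v]_0^\infty - [v]_0^\infty$, and while $t\partial_t v \to 0$ as $t \to 0$ (by Corollary \ref{cor:Lebesgue}), the term $v(g,0) = |f(g)|^2$ survives and produces the main term $\int_F |f(g)|^2\,\mathrm{d}g$. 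Your claimed bound $\int_F \mathcal{S}(f)^2 \lesssim \lambda^2|\Omega|$ cannot be correct as stated: take $\lambda$ so large that $\Omega = \emptyset$ and it would force $\mathcal{S}f \equiv 0$. The fix is harmless for the final integration in $\lambda$ — since $|f| \leq \mathcal{N}f$ a.e.\ and $F \subseteq \{\mathcal{N}f \leq \lambda\}$, the corrected bound $\int_F \mathcal{S}(f)^2 \lesssim \int_{\{\mathcal{N}f\leq\lambda\}} (\mathcal{N}f)^2 + \lambda^2|\Omega|$ still yields $\Vert \mathcal{S}f\Vert_{\mathsf{L}^1} \lesssim \Vert\mathcal{N}f\Vert_{\mathsf{L}^1}$ — but the term must be there, and this is exactly the term $\mathrm{I}_1$ in the paper's argument.

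Second, and more seriously, the step you yourself flag as delicate — Green's theorem on the sawtooth $\mathcal{R}$, with the lateral boundary integral controlled by the ``surface measure of $\partial\mathcal{R}$'' — is not justified on a stratified group, and the paper's proof is organised precisely to avoid it. The divergence theorem for the operator $\mathcal{L} - \partial_t^2$ on a rough domain in $G \times \mathbb{R}_{+}$ involves horizontal normal components and a notion of perimeter adapted to the sub-Riemannian structure; the boundary of a union of metric cones is not a Lipschitz graph in any sense for which these boundary integrals are routinely controlled, and the comparison of that surface measure with $|\widetilde{\Omega}|$ is not available off the shelf. The paper circumvents this entirely: it replaces the sharp indicator of the sawtooth by the smooth cutoff $\eta(\mathrm{H}_t(g))$ with $\mathrm{H}_t = \chi_{L_\beta(\alpha)} \ast P_t$, which equals $1$ on the inner sawtooth $W_\gamma$ and $0$ outside the outer sawtooth $\widetilde{W}_\beta$ (this is where ``$\beta$ large enough'' enters), and which is itself harmonic, so that $\slashed{\mathcal{L}}\bigl(|u\,\eta(\mathrm{H}_t)|^2\bigr)$ expands into the main term plus an error $C(\eta)\,|u|^2\,|\slashed{\nabla}\mathrm{H}_t|^2$ controlled by Littlewood--Paley theory applied to $\chi_{L_\beta(\alpha)}$. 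The integration by parts then takes place over all of $G \times \mathbb{R}_{+}$, where the only boundary is $t \in \{0,\infty\}$. You would need either to supply a divergence theorem valid for these sawtooth domains in $G \times \mathbb{R}_{+}$ together with the surface-measure bound, or to adopt the smooth-cutoff device; as written, the core of your argument is a gap rather than a proof.
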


\begin{proof}
Take $f\in \mathsf{L}^{1}({{G}})$ such that $\mathcal{M}_{P,\beta}(f)\in \mathsf{L}^{1}({{G}})$.
We assume that $f$ is real-valued, for otherwise we may treat the real and imaginary parts separately.
We may also suppose that $f$ is smooth, by a simple mollification argument.

Fix $\alpha > 0$, and define
\begin{gather*}
L_{\beta}(\alpha)
:=\left\{ g \in {{G}}: \mathcal{M}_{P,\beta}(f)(g) \leq \alpha \right\}  ,
\\
A_{\beta}(\alpha)
:=\left\{ g \in {{G}}:
\mathcal{M}_{S}(1 - \chi_{L_{\beta}(\alpha)})(g)<\frac{1}{4} \right\} ,
\end{gather*}
where $\mathcal{M}_{S}$ is the strong maximal operator, which is $\mathsf{L}^{2}$ bounded.
Then
\begin{align}\label{eq:size-A-L-1}
A_{\beta}(\alpha) \subseteq L_{\beta}(\alpha)
\qquad\text{and}\qquad
\left| (L_{\beta}(\alpha))^c \right|
\leq \left| A_{\beta}(\alpha)^{c} \right|
\lesssim \left| (L_{\beta}(\alpha))^c \right| .
\end{align}
Define also
\[
W_{\beta}:=\bigcup_{g\in A_{\beta}(\alpha)}\Gamma^{\beta}(g)
\qquad\text{and}\qquad
\widetilde{W}_{\beta}:=\bigcup_{h\in L_{\beta}(\alpha)(f)}\Gamma^{\beta}(h).
\]

We claim that there exists $C_{0}\in(0,1)$ such that
\begin{align*}
\chi_{L_\gamma(\alpha)} \ast P_{t}(g) \geq C_{0}
\qquad\forall (g,t)\in W_\gamma.
\end{align*}
Indeed, by definition, for such $(g,t)$,
\begin{align*}
(1 - \chi_{L_\gamma(\alpha)}) \ast \chi_{B(o,\gamma t)}
<\frac{1}{4} \,\left| B(o,\gamma t) \right|  ,
\end{align*}
that is,
\begin{align*}
\chi_{L_\gamma(\alpha)} \ast \chi_{B(o,\gamma t)} \geq \frac{3}{4} \,\left| B(o,\gamma t) \right|  ,
\end{align*}
and the claim follows from Lemma \ref{lem:heat-pois-estim}.
We also claim that if $\beta$ is large enough, then there is a constant $C_1\in(0,C_{0})$, such that if $(g,t)\notin \widetilde{W}_{\beta}$, then
\begin{align*}
\chi_{L_{\beta}(\alpha)} \ast P_{t}(g)\leq C_1.
\end{align*}
Indeed, if $(g,t)\notin \widetilde{W}_{\beta}$ then $\rho (h ^{-1}g)\geq \beta t $ for all $h \in L_{\beta}(\alpha)$.
Hence,
\begin{align*}
\chi_{L_{\beta}(\alpha)} \ast P_{t}(g)
&=\int_{G}
\chi_{L_{\beta}(\alpha)}(h)
P_{t}(h^{-1}g) \,\mathrm{d}h
\leq \int_{B(g, \beta t)^c} P_{t}(h^{-1}g)\,\mathrm{d}h\\
&= \int_{B(g, \beta t)^c} P_{1}(h^{-1}g)\,\mathrm{d}h
\to 0
\end{align*}
as $\beta\to \infty$, proving our claim.

Take a smooth function $\eta: \mathbb{R}\to \mathbb{R}$ such that $\eta(s)=1$ when $s\geq C_{0}$ and $\eta(s)=0$ when $s\leq C_1$.
Define $\mathrm{H}_{t} := \chi_{L_{\beta}(\alpha)} \ast P_{t}$.
Then
\[
t \partial_{t} \mathrm{H}_{t}(g)
= \chi_{L_{\beta}(\alpha)} \ast Q_{t}(g),
\]
which is uniformly bounded for all $g \in {G}$ and $t \in \mathbb{R}_{+}$ and
\[
t \partial_{t} \mathrm{H}_{t} \to 0
\qquad\text{as $t \to 0$}
\]
pointwise almost everywhere, by Corollary \ref{cor:Lebesgue},

It will suffice to show that
\begin{equation}\label{eq:need-to-show-1p-1}
\begin{aligned}
&\int_{A_{\gamma}(\alpha)} \mathcal{S}_{P,\gamma}(f)(g)^{2} \,\mathrm{d}g
\lesssim \int_{L_{\beta}(\alpha)} \mathcal{M}_{P,\beta}(f)(g)^{2}\,\mathrm{d}g
 + \alpha^{2}\left| L_{\gamma}(\alpha)^c \right| .
\end{aligned}
\end{equation}
Indeed, coupled with \eqref{eq:size-A-L-1}, this implies that
\begin{align*}
&\left| \{ g \in {G} : \mathcal{S}_{P,\gamma}(f)(g) > \alpha\} \right|  \\
&\qquad\leq \left| \{g \in A_{\gamma}(\alpha)^{c}: \mathcal{S}_{P,\gamma}(f)(g)>\alpha\} \right|
+ \left| \{g\in A_{\gamma}(\alpha): \mathcal{S}_{P,\gamma}(f)(g)>\alpha\} \right|  \\
&\qquad\leq \left| A_{\gamma}(\alpha)^c \right|
+ \frac{1}{\alpha^{2}}\int_{A_{\gamma}(\alpha)} \mathcal{S}_{P,\gamma}(f)(g)^{2} \,\mathrm{d}g \\
&\qquad\lesssim \left| L_{\gamma}(\alpha)^c \right|
+ \frac{1}{\alpha^{2}}
\int_{L_{\beta}(\alpha)} \mathcal{M}_{P,\beta}(f)(g)^{2}\,\mathrm{d}g.
\end{align*}
A standard integration with respect to $\alpha$ then implies that
\begin{align*}
\left\Vert  \mathcal{S}_{P,\gamma}(f) \right\Vert_{\mathsf{L}^{1}({G})}
\lesssim \left\Vert  \mathcal{M}_{P,\beta}(f) \right\Vert_{\mathsf{L}^{1}({G})},
\end{align*}
that is, the required estimate \eqref{L-P bounded by max-1} holds.

We observe that
\begin{equation*}
\begin{aligned}
\int_{A_{\beta}(\alpha)} \mathcal{S}_{P,\gamma}(f)(g)^{2}\,\mathrm{d}g
&= \int_{A_{\beta}(\alpha)} \iint_{\Gamma^{\gamma}(g)}
\left|  \slashed{\nabla}(f \ast P_{t}) (h) \right| ^{2}
 \frac{t}{\left| B(o,t) \right| } \,\mathrm{d}t \,\mathrm{d}h \,\mathrm{d}g\\
&\lesssim \iint_{W_{\gamma}}
\left|  \slashed{\nabla}(f \ast P_{t})(g) \right| ^{2} t \,\mathrm{d}t\,\mathrm{d}g\\
&\leq\iint_{{G} \times \mathbb{R}_{+}} \left| \slashed{\nabla}(f \ast P_{t})(g) \right| ^{2}
\left| \eta( \mathrm{H}_{t}(g))\right| ^{2} t \,\mathrm{d}t\,\mathrm{d}g.
\end{aligned}
\end{equation*}
From \eqref{eq:need-to-show-1p-1}, it will therefore suffice to show that
\begin{equation}\label{eq:need-to-show-1p-2}
\begin{aligned}
\mathrm{I}_0
&:=\iint_{{G} \times \mathbb{R}_{+} } \left| \slashed{\nabla}(f \ast P_{t})(g) \right| ^{2}
\left| \eta( \mathrm{H}_{t}(g))\right| ^{2} t \,\mathrm{d}t\,\mathrm{d}g
\\&\lesssim \int_{L_{\beta}(\alpha)} \mathcal{M}_{P,\beta}(f)(g)^{2}\,\mathrm{d}g
 + \alpha^{2}\left| L_{\gamma}(\alpha)^c \right| .
\end{aligned}
\end{equation}

We note that $u : (g,t) \mapsto F \ast P_{t}(g)$ is harmonic on ${G} \times \mathbb{R}_{+}$ for all $F\in \mathsf{L}^{1}({G}) + \mathsf{L}^{\infty}({G})$, in the sense that
\begin{align*}
\slashed{\mathcal{L}} u (g ,t )=0,
\end{align*}
where $\slashed{\mathcal{L}} := \mathcal{L} - \partial_{t}^{2}$.
Consequently,
\begin{align*}
\left| \slashed{\nabla} u (g ,t) \right| ^{2}
= - \frac{1}{2}\slashed{\mathcal{L}} \left( u ^{2}(g,t)\right)
\qquad\forall (g ,t )\in {G} \times \mathbb{R}_{+}.
\end{align*}
Further, by our remark on harmonicity, $\slashed{\mathcal{L}} \mathrm{H}_{t} = 0$, and so
\[
\begin{aligned}
\slashed{\mathcal{L}} \eta( \mathrm{H}_{t}(g))
&= \slashed{\nabla} \cdot (\eta'( \mathrm{H}_{t}(g)) \slashed{\nabla} \mathrm{H}_{t}(g)) \\
&= \eta''( \mathrm{H}_{t}(g)) \left| \slashed{\nabla} \mathrm{H}_{t}(g)) \right| ^2.
\end{aligned}
\]
It follows that
\begin{equation}\label{eq:harmonicity-1p}
\begin{aligned}
\left| \slashed{\nabla} (f \ast P_{t}) (g) \eta( \mathrm{H}_{t}(g))\right| ^{2}
&
= -\frac{1}{2}\slashed{\mathcal{L}} \left(  \left|  f \ast P_{t} (g) \eta( \mathrm{H}_{t}(g))\right| ^{2}\right)
\\&\qquad
-4 f\ast P_{t}(g)\eta( \mathrm{H}_{t}(g))
\slashed{\nabla} (f\ast P_{t})(g) \cdot \slashed{\nabla} \eta(\mathrm{H}_{t}(g))\\
&\qquad-\left| f\ast P_{t}(g) \right| ^2 \left| \slashed{\nabla} \eta(\mathrm{H}_{t}(g))\right| ^{2}
\\&\qquad
-\left| f\ast P_{t}(g) \right| ^2 \eta( \mathrm{H}_{t}(g)) \eta''( \mathrm{H}_{t}(g)) \left| \slashed{\nabla} \mathrm{H}_{t}(g)) \right| ^2 .
\end{aligned}
\end{equation}
We estimate the second, third, and fourth terms on the right hand side of \eqref{eq:harmonicity-1p} as follows.
First, by the arithmetic--geometric mean inequality and the chain rule,
\[
\begin{aligned}
&\left|  4 f\ast P_{t}(g) \eta(\mathrm{H}_{t}(g))
\slashed{\nabla} (f\ast P_{t})(g) \cdot \slashed{\nabla} \eta(\mathrm{H}_{t}(g)) \right|
\\
&\qquad\leq \frac{1}{2} \left| \slashed{\nabla} P_{t}\ast f(g) \right| ^{2}\left| \eta( \mathrm{H}_{t}(g)) \right| ^{2}
+ 8 \left|  f \ast P_{t} (g) \right| ^{2} \left|  \slashed{\nabla} \eta(\mathrm{H}_{t}(g)) \right| ^2 \\
&\qquad\leq \frac{1}{2} \left| f \ast \slashed{\nabla} P_{t}(g) \right| ^{2}\left| \eta(\mathrm{H}_{t}(g)) \right| ^{2}
+ 8 \left\Vert \eta'\right\Vert_{\mathsf{L}^\infty(\mathbb{R})} \left|  f \ast P_{t} (g) \right| ^{2}
\left| \slashed{\nabla} \mathrm{H}_{t}(g) \right| ^2
\end{aligned}
\]
and we can move the first term on the right hand side of this inequality to the left hand side of \eqref{eq:harmonicity-1p}.
Next,
\[
\left| f\ast P_{t}(g) \right| ^2 \left| \slashed{\nabla} \eta(\mathrm{H}_{t}(g))\right| ^{2}
\leq \left\Vert \eta'\right\Vert_{\mathsf{L}^\infty(\mathbb{R})}^2 \left| f\ast P_{t}(g) \right| ^2 \left| \slashed{\nabla} \mathrm{H}_{t}(g)\right| ^{2}
\]
and similarly,
\[
\begin{aligned}
&\left| f\ast P_{t}(g) \right| ^2 \left| \eta( \mathrm{H}_{t}(g)) \right|  \left| \eta''( \mathrm{H}_{t}(g)) \right|  \left| \slashed{\nabla} \mathrm{H}_{t}(g)) \right| ^2 \\
&\qquad\leq \left\Vert \eta\right\Vert_{\mathsf{L}^\infty(\mathbb{R})} \left\Vert \eta''\right\Vert_{\mathsf{L}^\infty(\mathbb{R})}
 \left| f\ast P_{t}(g) \right| ^2 \left| \slashed{\nabla} \mathrm{H}_{t}(g)) \right| ^2.
\end{aligned}
\]
We conclude that
\begin{equation}\label{eq:harmonicity-1p-bis}
\begin{aligned}
&\left| \slashed{\nabla} (f \ast P_{t}) (g) \eta( \mathrm{H}_{t}(g))\right| ^{2}\\
&\qquad
\leq - \slashed{\mathcal{L}} \left(  \left|  f \ast P_{t} (g) \eta(\mathrm{H}_{t}(g)) \right| ^{2}\right)
+ C(\eta) \left| f\ast P_{t}(g) \right| ^2 \left| \slashed{\nabla} \mathrm{H}_{t}(g)) \right| ^2 \\
&\qquad=:f_1(g,t)+f_2(g,t),
\end{aligned}
\end{equation}
say.
The proof of \eqref{eq:need-to-show-1p-2} is now straightforward.

Evidently, $\mathrm{I}_0 \leq \mathrm{I}_1 + \mathrm{I}_2$, where
\begin{align*}
{\mathrm{I}}_j
= \left|  \iint_{{G} \times \mathbb{R}_{+}} f_j (g,t)t \,\mathrm{d}t\,\mathrm{d}g \right| .
\end{align*}

To treat the term ${\mathrm{I}_1}$, we recall that $\slashed{\mathcal{L}} = \mathcal{L} - \partial_{t}^2$.
Integration by parts and the decay of the Poisson integral $f\ast P_{t}$ at infinity imply that
\begin{align*}
&\int_{{G}} \mathcal{L}
\Bigl( \left| f\ast P_{t}(g)
\eta( \mathrm{H}_{t}(g))\right| ^{2}
\Bigr) t \,\mathrm{d}g = 0
\end{align*}
for all $t > 0$, and also that
\begin{align*}
&\int_{\mathbb{R}_{+}}\partial_{t}^2
\Bigl( \left| f\ast P_{t}(g)
\eta( \mathrm{H}_{t}(g))\right| ^{2}
\Bigr) t\,\mathrm{d}t\\
&\qquad= \left[ t \partial_{t}
\left( \left|  f\ast P_{t}(g)
\eta( \mathrm{H}_{t}(g))\right| ^{2}\right) \right] _{t=0}^{t=\infty}
-\int_{\mathbb{R}_{+}}\partial_{t}
\left( \left|  f\ast P_{t}(g)
\eta( \mathrm{H}_{t}(g))\right| ^{2} \right)  \,\mathrm{d}t \\
&\qquad= 2\Bigl[
\left(  f\ast P_{t}(g)
\eta( \mathrm{H}_{t}(g))\right)
t \partial_{t} \left(  f\ast P_{t}(g)
\eta( \mathrm{H}_{t}(g))\right) \Bigr]_{t=0}^{t=\infty}
- \Bigl[ \left|  f\ast P_{t}(g)
\eta( \mathrm{H}_{t}(g))\right| ^{2} \Bigr]_{t=0}^{t=\infty} \\
&\qquad=
\left|  f(g) \chi_{L_\beta(\alpha)}(g) \right| ^2;
\end{align*}
many of the terms here when $t=0$ vanish by our remarks before the enunciation of this proposition.
Therefore
\begin{align*}
{\mathrm{I}_1}
&=\int_{L_\beta(\alpha)} \left| f(g)\right| ^{2} \,\mathrm{d}g ,
\end{align*}
which is the first term on the right hand side of \eqref{eq:need-to-show-1p-2}.

Next, since $\left|  f \ast P_{t} (g) \right|  \leq \mathcal{M}_{P,\beta}(g) \leq \alpha$ when $(g,t) \in W_\beta$, and $\slashed{\nabla} P_{t}$ has mean $0$,
\begin{align*}
\mathrm{I}_2
&=\iint_{W_\beta} \left| f\ast P_{t}(g) \right| ^2
\left| \slashed{\nabla} \mathrm{H}_{t}(g)\right| ^{2} t \,\mathrm{d}t\,\mathrm{d}g \\
&\leq \alpha^2 \iint_{W_\beta}
\left| \slashed{\nabla} \mathrm{H}_{t}(g)\right| ^{2} t \,\mathrm{d}t\,\mathrm{d}g \\
&\leq \alpha^2 \iint_{{G} \times \mathbb{R}_{+}} \left| \chi_{L_\beta(\alpha)} \ast t\slashed{\nabla} P_{t} (g))  \right| ^2 \,\frac{\mathrm{d}t}{t} \,\mathrm{d}g \\
&= \alpha^2 \iint_{{G} \times \mathbb{R}_{+}} \left| (1 - \chi_{L_\beta(\alpha)}) \ast t\slashed{\nabla} P_{t} (g))  \right| ^2 \,\frac{\mathrm{d}t}{t}\,\mathrm{d}g \\
&\eqsim \alpha^2 \left|  L_\beta(\alpha)^c  \right| ^2,
\end{align*}
by Littlewood--Paley theory.
This is the second term on the right hand side of \eqref{eq:need-to-show-1p-2}, and the proposition is now proved.
\end{proof}

\begin{remark}\label{rem:integ-by-parts}
We summarise the first step of this proof as the application of harmonicity to estimate the desired square function as a sum of two terms in \eqref{eq:harmonicity-1p-bis}.
The ``main term'' $\mathrm{I}_1$ gives us the function $f$ that we started with, while the ``error term'' $\mathrm{I}_2$ gives us an expression that we can handle by using Littlewood--Paley arguments.
\end{remark}

\subsection{Part 3 of the proof of Proposition \ref{prop poisson L-P and max}}

It remains to take a product group $\boldsymbol{G}$, prove the inclusion and inequality
\begin{gather*}
\mathsf{H}^1_{\max,P,\beta}(\boldsymbol{G})
\subseteq \mathsf{H}^1_{\mathrm{sq},P,1}(\boldsymbol{G}) \\
\left\Vert   \mathcal{S}_{P,1} f \right\Vert_{\mathsf{L}^1(\boldsymbol{G}) }
\lesssim \left\Vert  \mathcal{M}_{P,\beta} f \right\Vert_{\mathsf{L}^1(\boldsymbol{G}) }
\qquad\forall f \in \mathsf{H}^1_{\max,P,\beta}(\boldsymbol{G}).
\end{gather*}
Again we may and do assume that $f$ is real-valued and smooth.

The initial definitions are the same as in the one-parameter case.
Take $f\in \mathsf{L}^{1}(\boldsymbol{G})$ such that $\mathcal{M}_{P,\beta}(f)\in \mathsf{L}^{1}(\boldsymbol{G})$ and $\alpha>0$.
Define
\begin{gather*}
L_{\beta}(\alpha)
:=\left\{ \boldsymbol{g} \in \boldsymbol{G}: \mathcal{M}_{P,\beta}(f)(\boldsymbol{g}) \leq \alpha \right\}  ,
\\
A_{\beta}(\alpha)
:=\left\{ \boldsymbol{g} \in \boldsymbol{G}:
\mathcal{M}_{S}(1 - \chi_{L_{\beta}(\alpha)})(\boldsymbol{g})<\frac{1}{4} \right\} ,
\end{gather*}
where $\mathcal{M}_{S}$ denotes the strong maximal operator.
By the same argument as in the one-parameter case,
\begin{align*}
A_{\beta}(\alpha)
\subseteq
L_{\beta}(\alpha)
\qquad\text{and}\qquad
\left| A_{\beta}(\alpha)^{c} \right| \lesssim \left| L_{\beta}(\alpha)^c \right| .
\end{align*}
Define also
\[
W_{\beta}:=\bigcup_{\boldsymbol{g}\in A_{\beta}(\alpha)}\Gamma^{\beta}(\boldsymbol{g})
\qquad\text{and}\qquad
\widetilde{W}_{\beta}:=\bigcup_{\boldsymbol{h}\in L_{\beta}(\alpha)(f)}\Gamma^{\beta}(\boldsymbol{h}).
\]

As in the one-parameter case, there exists $C_{0}\in(0,1)$ and $C_1\in(0,C_{0})$, such that
\begin{align*}
\chi_{L_1(\alpha)} \ast P_{\boldsymbol{t}}(\boldsymbol{g}) &\geq C_{0}
\qquad\forall (\boldsymbol{g},\boldsymbol{t})\in W_\gamma \\
\chi_{L_{\beta}(\alpha)} \ast P_{\boldsymbol{t}}(\boldsymbol{g}) &\leq C_1
\qquad\forall (\boldsymbol{g},\boldsymbol{t})\in (\widetilde{W}_{\beta})^c ,
\end{align*}
provided that $\beta$ is large enough.

We let $\mathrm{H}_{\boldsymbol{t}} := \chi_{L_{\beta}(\alpha)} \ast P_{\boldsymbol{t}}$; here $t_1,t_2 \geq 0$.
Take a smooth real-valued function $\eta$ on $\mathbb{R}$ such that $\eta(s)=1$ when $s\geq C_{0}$ and $\eta(s)=0$ when $s\leq C_1$.
By definition,
\[
t_1 \partial_{t_1} \mathrm{H}_{\boldsymbol{t}}(\boldsymbol{g})
= \chi_{L_{\beta}(\alpha)} \ast (Q_{t_1} \otimes P_{t_2})(\boldsymbol{g}),
\]
and this is uniformly bounded for all $\boldsymbol{g} \in \boldsymbol{G}$ and $t_1, t_2 \geq 0$; further, by \eqref{eq:chi-star-Q},
\[
t_1 \partial_{t_1} \mathrm{H}_{\boldsymbol{t}}(\boldsymbol{g}) \to 0
\qquad\text{as $t_1 \to 0$}
\]
for almost all $\boldsymbol{g} \in \boldsymbol{G}$.

Again, it will suffice to show that
\begin{equation*}
\begin{aligned}
&\iint_{\boldsymbol{G} \times \boldsymbol{T}} \left| \slashed{\nabla}_1\slashed{\nabla}_2(f \ast P_{\boldsymbol{t}})(\boldsymbol{g}) \right| ^{2}
\left| \eta( \mathrm{H}_{\boldsymbol{t}}(\boldsymbol{g}))\right| ^{2} \boldsymbol{t} \,\mathrm{d}\boldsymbol{t} \,\mathrm{d}\boldsymbol{g}
\\&\qquad
\lesssim \int_{L_{\beta}(\alpha)} \mathcal{M}_{P,\beta}(f)(\boldsymbol{g})^{2}\,\mathrm{d}\boldsymbol{g}
 + \alpha^{2}\left| L_{1}(\alpha)^c \right| .
\end{aligned}
\end{equation*}
We do this by extending the computation for a single homogeneous group.

First, we fix the variables $g_2$ and $t_2$.
By the one-parameter case,
\[
\left\Vert \mathcal{S}_{P,\gamma}^{[1]}(f)(\cdot,g_2)\right\Vert_{\mathsf{L}^1(G_1)}
\lesssim
\left\Vert \mathcal{M}_{P,\beta}^{[1]} (f)(\cdot, g_2)\right\Vert_{\mathsf{L}^1(G_1)},
\]
whence
\begin{equation}\label{eq:2-parameter-case}
\left\Vert  \mathcal{S}_{P,\gamma}^{[1]}(f)\right\Vert_{\mathsf{L}^1(\boldsymbol{G})}
\lesssim
\left\Vert \mathcal{M}_{P,\beta}^{[1]} (f)\right\Vert_{\mathsf{L}^1(\boldsymbol{G})}
\leq
\left\Vert \mathcal{M}_{P,\beta} (f)\right\Vert_{\mathsf{L}^1(\boldsymbol{G})}
\end{equation}
by integration over $G_2$ and the pointwise inequality $\mathcal{M}_{P,\beta}^{[1]} (f) \leq \mathcal{M}_{P,\beta} (f)$.
A similar result holds for the Littlewood--Paley operator acting in the second variable only.

The function $(\boldsymbol{g} , \boldsymbol{t}) \mapsto f \ast P_{\boldsymbol{t}}(\boldsymbol{g})$ is harmonic in the $g_1$ and $t_1$ variables, and in the $g_2$ and $t_2$ variables.
This leads to a more complicated analogue of \eqref{eq:harmonicity-1p-bis}, with four terms, $\mathrm{I}_{11}$, $\mathrm{I}_{21}$, $\mathrm{I}_{12}$, and $\mathrm{I}_{22}$, where the subscript $i_1i_2$ indicates a term like $\mathrm{I}_{i_1}$ in the first factor, and a term like $\mathrm{I}_{i_2}$ in the second factor.

There is one ``main term'' $\mathrm{I}_{11}$ with a double sub-Laplacian, namely,
\[
\slashed{\mathcal{L}}_1 \slashed{\mathcal{L}}_2 \Bigl( \left| f\ast P_{\boldsymbol{t}}(\boldsymbol{g})
\eta( \mathrm{H}_{\boldsymbol{t}}(\boldsymbol{g}))\right| ^{2} \Bigr)  .
\]
When integrated, by iterating the argument used to treat $\mathrm{I}_1$ in Section \ref{ssec:LP-and-max}, $\mathrm{I}_{11}$ gives
\[
\int_{L_1(\alpha)} \left| f(\boldsymbol{g}) \right| ^2 \,\mathrm{d}\boldsymbol{g} .
\]
The ``mixed terms'' $\mathrm{I}_{21}$ and $\mathrm{I}_{12}$, with a sub-Laplacian in one variable and a square function in the other, may be treated by \eqref{eq:2-parameter-case} and its analogue with $G_1$ and $G_2$ interchanged.
Finally, the ``double error term'' $\mathrm{I}_{22}$ may be treated using Littlewood--Paley theory, much as we treated $\mathrm{I}_2$ before.

\subsection{A reproducing formula}

We shall use the discrete Calder\'on reproducing formula from \cite[Theorem 2.9]{HLL}.
We first give a definition of the space ${\mathsf{M}}(G,r,g)$ of molecules of scale $r$ near a point $g$ on a group $G$ of homogeneous dimension $Q$, and then define the analogous space on a product group.
In the more general setting of spaces of homogenous type, this space was introduced in \cite{HLW}.
Recall that $\varepsilon \in (0,1]$ is a fixed parameter.

\begin{definition}\label{pre test}
Fix $r> 0$ and $g\in G$.
We say that a function $f$ on $G$ is in ${\mathsf{M}}(G,r,g)$ if there is a constant $C$ such that
\begin{equation}\label{eq05}
\begin{aligned}
\left| f(h) \right|
&\leq C\frac {r^\varepsilon}{(r+\rho(g^{-1}h))^{Q+\varepsilon}}
\\
\left| f(h) -f(h') \right|
&\le C
\frac{\rho(h^{-1}h')^\varepsilon}
{(r+ \rho(g^{-1}h)+ \rho(g^{-1}h'))^{Q+\varepsilon}}
\end{aligned}
\end{equation}
for all $h, h' \in G$.
If moreover $f$ satisfies the cancellation condition
\[
\int_G f(g)\,\mathrm{d}g = 0,
\]
then we write $f \in {\mathsf{M}_0}(G,r,g)$.
The norm $\left\Vert  f\right\Vert_{\mathsf{M}(G,r,g)}$ is defined to be the least constant $C$ such that the inequalities \eqref{eq05} both hold.
\end{definition}

Clearly, the space $\mathsf{M}(G)$ of \eqref{eq:molecule} is equal to $\mathsf{M}(G, 1,o)$, and $f \in \mathsf{M}(G)$ if and only if $f \in \mathsf{M}(G, r, g)$ for all $r > 0$ and all $g \in G$.
Changing $r$ or $g$ changes the norms.

We now define the molecular space $\mathsf{M}(\boldsymbol{G},\boldsymbol{r},\boldsymbol{g})$ on the product group $\boldsymbol{G}$ as follows.

\begin{definition}\label{product test-1}
Fix $\boldsymbol{r} \in \boldsymbol{T}$ and $\boldsymbol{g} \in \boldsymbol{G}$.
We say that $\psi:\boldsymbol{G} \to \mathbb{C}$ is in $\mathsf{M}(\boldsymbol{G},\boldsymbol{r},\boldsymbol{g})$ if
$\psi(\cdot,h_2)\in \mathsf{M}(G_1,r_1,g_1)$ for all $h_2 \in G_2$
and $\psi(h_1,\cdot)\in \mathsf{M}(G_2,r_2,g_2)$ for all $h_1 \in G_1$, and
\begin{equation}\label{eq06}
\begin{gathered}
\left\Vert  \psi(\cdot,h_2) \right\Vert_{{\mathsf{M}}(G_1,r_1,g_1)}
\leq C\frac{ r_2^\varepsilon}{(r_2+\rho_2(g_2^{-1}h_2) )^{Q_2+\varepsilon}}
\\
\left\Vert  \psi(h_1,\cdot) \right\Vert_{\mathsf{M}(G_2,r_2,g_2)}
\leq C\frac{ r_1^\varepsilon}{(r_1+\rho_1(g_1^{-1}h_1))^{Q_1+\varepsilon}}
\\
\left\Vert  \psi(\cdot,h_2)-\psi(\cdot,h'_2) \right\Vert_{{\mathsf{M}}(G_1,r_1,g_1)}
\leq C \frac{ \rho_2(h_2^{-1}h'_2)^\varepsilon }{(r_2 + \rho_2(g_2^{-1} h_2) + \rho_2(g_2^{-1}h'_2))^{Q + \varepsilon}}
\\
\left\Vert  \psi(h_1,\cdot)-\psi(h'_1,\cdot) \right\Vert_{{\mathsf{M}}(G_2,r_2,g_2)}
\leq C \frac{ \rho_1(h_1^{-1}h'_1)^\varepsilon }{(r_1 + \rho_1(g_1^{-1}h_1) + \rho_1(g_1^{-1}h'_1))^{Q + \varepsilon}}
\end{gathered}
\end{equation}
for all $\boldsymbol{h}, \boldsymbol{h}' \in \boldsymbol{G}$.
If moreover $\psi$ satisfies the cancellation conditions
\[
\int_{G_1} \psi(g_1, \cdot)\,\mathrm{d}g_1 = 0
\qquad\text{and}\qquad
\int_{G_2} \psi(\cdot, g_2)\,\mathrm{d}g_2 = 0,
\]
then we write $\psi \in {\mathsf{M}_0}(\boldsymbol{G},\boldsymbol{r},\boldsymbol{g})$.
The norm $\left\Vert  \psi\right\Vert_{\mathsf{M}(\boldsymbol{G}, \boldsymbol{r}, \boldsymbol{g})}$ is defined to be the least constant $C$ such that the inequalities \eqref{eq06} above all hold.
\end{definition}

Evidently, if $\psi_1\in {\mathsf{M}_0}(G_1,r_1,g_1)$ and $\psi_2\in {\mathsf{M}_0}(G_2,r_2,g_2)$, then $\psi_1 \otimes \psi_2 \in {\mathsf{M}_0}(\boldsymbol{G},\boldsymbol{r},\boldsymbol{g})$.
It is easy to check that
\[
\left\Vert  f \right\Vert_{\mathsf{M}(\boldsymbol{G},\boldsymbol{r},\boldsymbol{g})}
= \left\Vert  r_1^{Q_1} r_2^{Q_2}\psi(\boldsymbol{g} \delta_{\boldsymbol{r}}(\cdot)) \right\Vert_{\mathsf{M}(\boldsymbol{G})}.
\]
Hence the $\mathsf{L}^1(\boldsymbol{G})$ norms of elements of a bounded subset of ${\mathsf{M}}(\boldsymbol{G},\boldsymbol{r},\boldsymbol{g})$ are bounded.

We are now ready to state the version of the Calder\'on reproducing formula that we are going to use.
Let $\sigma: \mathscr{P}(\boldsymbol{G}) \to \boldsymbol{G}$ be an arbitrary function such that $\sigma(R) \in \bar R$ for all $R \in \mathscr{P}(\boldsymbol{G})$ and let $\boldsymbol{\ell}: \mathscr{P}(\boldsymbol{G}) \to \boldsymbol{T}$ be the function such that $\ell_i(Q_1 \times Q_2) = \ell(Q_i)$, the ``side-length'' of $Q_i$.
For $\varphi^{[1]} \in \mathsf{M}(G_1)$ and $\varphi^{[2]} \in \mathsf{M}(G_2)$, we take $\varphi$ to be $\varphi^{[1]} \otimes \varphi^{[2]}$, and define $\varphi_R$ to be the function $\boldsymbol{h} \mapsto [\bar\varphi]_{\boldsymbol{\ell}(R)}(\boldsymbol{h}^{-1}\sigma(R))$.
Observe that
\begin{equation}\label{eq:eq}
\int_{\boldsymbol{G}} f(\boldsymbol{h})  \bar\varphi_{R}(\boldsymbol{h}) \,\mathrm{d}\boldsymbol{h}
=\int_{\boldsymbol{G}} f(\boldsymbol{h}) \left[\varphi\right]_{\boldsymbol{\ell}(R)}(\boldsymbol{h}^{-1} \sigma(R)) \,\mathrm{d}\boldsymbol{h}
= f \ast [\varphi]_{\boldsymbol{\ell}(R)}(\sigma(R)).
\end{equation}

The point of the following theorem is that the collection $\{ \left| R\right| ^{1/2} \varphi_R: R \in \mathscr{P}(\boldsymbol{G})\}$ is a well-behaved frame in $\mathsf{L}^2(\boldsymbol{G})$, with a well-behaved dual frame $\{ \left| R\right| ^{1/2} \tilde\varphi_R : R \in \mathscr{P}(\boldsymbol{G})\}$.
By well-behaved, we mean that $\varphi_R$ and $\tilde\varphi_R$ are concentrated near $R$, and certain molecular norms of $\varphi_R$ and $\tilde\varphi_R$ are uniformly bounded in $R$ \emph{and} in $\sigma$.

\begin{theorem}[{\cite[Theorem 2.9]{HLL}}]\label{dcrf-22}
Suppose that $\sigma: \mathscr{P}(\boldsymbol{G}) \to \boldsymbol{G}$ and $\varphi \in \mathsf{M}(\boldsymbol{G})$ are as discussed above.
Then, after possible replacing $\varphi$ by a normalised dilate of $\varphi$, there exist functions $\tilde\varphi_R$ in $\mathsf{M}_0(\boldsymbol{G})$, which may also depend on $\sigma$, such that
\begin{align*}
\psi = \sum_{R \in \mathscr{P}(\boldsymbol{G})} \left| R\right|  \left\langle \psi , \varphi_{R} \right\rangle \tilde\varphi_R
\end{align*}
for every $\psi$ in $\mathsf{M}_0(\boldsymbol{G})$.
Further,
\[
\bigl\Vert \varphi_R \bigr\Vert_{ {\mathsf{M}}(\boldsymbol{G},\boldsymbol{\ell}(R),\sigma(R))}
 + \bigl\Vert \tilde\varphi_R \bigr\Vert_{ {\mathsf{M}_0}(\boldsymbol{G},\boldsymbol{\ell}(R),\sigma(R))}
\]
is uniformly bounded, irrespective of $R$ and the choice of $\sigma(R)$.
\end{theorem}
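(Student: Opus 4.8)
The plan is to deduce the discrete identity from a continuous Calderón reproducing formula by a discretisation-plus-Neumann-series argument of the type pioneered by Coifman and developed in the product setting by Han, Lu and collaborators. First I would use the functional calculus of Section \ref{ssec:funct-calc} to build a one-parameter continuous resolution of the identity on each factor $G_i$: choosing $\eta^{[i]}$ supported in $[1/2,2]$ and normalised so that $\int_0^\infty \eta^{[i]}(s)^2\,\frac{\mathrm{d}s}{s}=1$, the Littlewood--Paley operators $\mathcal{Q}^{[i]}_{t_i}:=\eta^{[i]}(t_i^2\mathcal{L}_i)$ have product-molecular convolution kernels with cancellation and satisfy $\int_0^\infty (\mathcal{Q}^{[i]}_{t_i})^2\,\frac{\mathrm{d}t_i}{t_i}=\mathcal{I}_i$ on the mean-zero part of $\mathsf{L}^2(G_i)$. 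Writing $\mathcal{Q}_{\boldsymbol{t}}:=\mathcal{Q}^{[1]}_{t_1}\otimes\mathcal{Q}^{[2]}_{t_2}$ and tensoring the two factors yields the bi-parameter formula $\psi=\iint_{\boldsymbol{T}} \mathcal{Q}_{\boldsymbol{t}}\mathcal{Q}_{\boldsymbol{t}}\psi\,\frac{\mathrm{d}\boldsymbol{t}}{\boldsymbol{t}}$ for all $\psi\in\mathsf{M}_0(\boldsymbol{G})$. Taking $\varphi$ to be the kernel of $\mathcal{Q}^{[1]}_1\otimes\mathcal{Q}^{[2]}_1$ (this is the source of the permitted ``replacement of $\varphi$ by a normalised dilate'') makes this exactly the continuous version of the claimed identity.

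Second, I would discretise. The time integral $\iint_{\boldsymbol{T}}\cdots\frac{\mathrm{d}\boldsymbol{t}}{\boldsymbol{t}}$ is replaced by a sum over scales $\boldsymbol{t}=\boldsymbol{\ell}(R)$, and on each scale the spatial integral is replaced by a sum over the Hyt\"onen--Kairema rectangles $R\in\mathscr{P}^{\boldsymbol{j}}(\boldsymbol{G})$, with $\mathcal{Q}_{\boldsymbol{t}}\psi$ evaluated at the representative point $\sigma(R)$. Recalling \eqref{eq:eq}, this produces an operator identity $\psi=\mathcal{D}_\sigma\psi+E_\sigma\psi$, where $\mathcal{D}_\sigma\psi:=\sum_{R\in\mathscr{P}(\boldsymbol{G})}\left|R\right|\left\langle\psi,\varphi_R\right\rangle\varphi_R$ is the desired diagonal sum and $E_\sigma$ collects the discretisation error coming from replacing both $\boldsymbol{t}$ by $\boldsymbol{\ell}(R)$ and $\mathcal{Q}_{\boldsymbol{t}}\psi(\boldsymbol{h})$ by $\mathcal{Q}_{\boldsymbol{\ell}(R)}\psi(\sigma(R))$ on each rectangle.

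Third, and this is the technical heart, I would show that $E_\sigma$ is bounded on $\mathsf{M}_0(\boldsymbol{G})$ with operator norm strictly less than $1$, uniformly in $\sigma$, provided the Hyt\"onen--Kairema parameter $\kappa$ is small enough. The engine is a bi-parameter almost-orthogonality estimate: the kernel of $\mathcal{Q}^{[i]}_{s}\mathcal{Q}^{[i]}_{s'}$ on $G_i$ obeys the one-parameter molecular bound with a gain $(s/s'\wedge s'/s)^{\varepsilon}$ in the ratio of scales, and, because everything tensors, the product kernels enjoy the corresponding estimate separately in each pair of scales. A Schur-type summation in both parameters then controls $E_\sigma$ on the molecular space; the Hölder-modulus increments built into \eqref{eq06} are precisely what allow the error from moving $\boldsymbol{h}$ to $\sigma(R)$ to be summed against the scale gains. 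With $\left\Vert E_\sigma\right\Vert<1$, the operator $\mathcal{D}_\sigma=\mathcal{I}-E_\sigma$ is invertible on $\mathsf{M}_0(\boldsymbol{G})$ via the Neumann series $\sum_{n\geq0}E_\sigma^{\,n}$.

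Finally, I would set $\tilde\varphi_R:=\mathcal{D}_\sigma^{-1}\varphi_R$, so that $\psi=\mathcal{D}_\sigma^{-1}\mathcal{D}_\sigma\psi=\sum_{R}\left|R\right|\left\langle\psi,\varphi_R\right\rangle\tilde\varphi_R$, which is the stated formula. The uniform bound on $\varphi_R$ in $\mathsf{M}(\boldsymbol{G},\boldsymbol{\ell}(R),\sigma(R))$ follows at once from the tensor structure $\varphi=\varphi^{[1]}\otimes\varphi^{[2]}$ together with the rescaling identity for $\left\Vert\cdot\right\Vert_{\mathsf{M}(\boldsymbol{G},\boldsymbol{r},\boldsymbol{g})}$ recorded just before this theorem. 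The bound on $\tilde\varphi_R$, and its membership in $\mathsf{M}_0(\boldsymbol{G})$, follow because each $E_\sigma^{\,n}$ maps a molecule of scale $\boldsymbol{\ell}(R)$ near $\sigma(R)$ to a molecule of the same type with geometrically decaying norm and preserves the mean-zero property in each variable, so the series converges in $\mathsf{M}_0(\boldsymbol{G},\boldsymbol{\ell}(R),\sigma(R))$. I expect the main obstacle to be the bi-parameter bookkeeping in the third step: one must sum an almost-diagonal matrix independently in two scale parameters while keeping every constant uniform over the arbitrary placement of the points $\sigma(R)\in\bar R$.
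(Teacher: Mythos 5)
This theorem is not proved in the paper at all: it is imported verbatim from \cite[Theorem 2.9]{HLL}, and your sketch is essentially the standard Coifman-type argument (continuous Calder\'on formula from the functional calculus, discretisation over the Hyt\"onen--Kairema rectangles, bi-parameter almost-orthogonality, Neumann-series inversion of $\mathcal{I}-E_\sigma$) that underlies the proof in that reference. The one technical point worth making explicit is that the Neumann series $\sum_{n\ge 0}E_\sigma^{\,n}$ is summed in a molecular space with a strictly smaller smoothness exponent than the one in which each individual $E_\sigma^{\,n}$ is estimated (each iteration of the almost-orthogonality lemma degrades the H\"older index slightly), and that the smallness of $\Vert E_\sigma\Vert$ is achieved by taking the discretisation sufficiently fine --- both standard, and consistent with the ``after possibly replacing $\varphi$ by a normalised dilate'' proviso in the statement.
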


As $\mathsf{L}^1(\boldsymbol{G})$ is a subspace of the dual space of $\mathsf{M}_0(\boldsymbol{G})$, by \eqref{eq:eq} and a duality argument,
\begin{align*}
f \ast \psi (\boldsymbol{g})
=\sum_{R \in \mathscr{P}(\boldsymbol{G})}
\left( \left| R\right|  f \ast [\varphi]_{\boldsymbol{\ell}(R)}(\sigma(R)) \right)
\tilde\varphi_{R} \ast \psi (\boldsymbol{g})
\qquad\forall \boldsymbol{g} \in \boldsymbol{G}
\end{align*}
for all $\psi \in \mathsf{M}_0(\boldsymbol{G})$ and all $f\in \mathsf{L}^1(\boldsymbol{G})$.

\subsection{Proof of Proposition \ref{prop equiv max}}

We are going to prove Proposition \ref{prop equiv max}.
Again, we need to extend what we know about homogeneous groups to product groups.
We shall prove a stronger result concerning the grand maximal function, which we now introduce.

We define
\[
\mathsf{F}(\boldsymbol{G})
:= \left\{ \zeta^{[1]} \otimes \zeta^{[2]} :
\left\Vert  \zeta^{[1]} \right\Vert_{\mathsf{M}(G_1)} \leq 1,
\left\Vert  \zeta^{[2]} \right\Vert_{\mathsf{M}(G_2)} \leq 1  \right\},
\]
and the grand maximal operator $\mathcal{G}$:
\begin{equation*}
\mathcal{G}(f)(\boldsymbol{g})
:=\sup\bigl\{ \left| f \ast \zeta_{\boldsymbol{t}} (\boldsymbol{g}) \right| :
\zeta \in \mathsf{F}(\boldsymbol{G}),
\boldsymbol{t} \in \boldsymbol{T} \bigr\}
\qquad\forall \boldsymbol{g} \in \boldsymbol{G}
\end{equation*}
for all $f\in \mathsf{L}^1(\boldsymbol{G})$.
We write $\mathcal{R}_{\boldsymbol{h}}$ for the operator of right translation by $\boldsymbol{h} \in \boldsymbol{G}$, that is, $\mathcal{R}_{\boldsymbol{h}}\zeta(\boldsymbol{g}) = \zeta(\boldsymbol{g}\boldsymbol{h})$ for all $\boldsymbol{g} \in \boldsymbol{G}$ and $\zeta \in \mathsf{F}(\boldsymbol{G})$.
Since
\[
\begin{aligned}
\mathcal{M}_{\zeta,\beta}(f) (\boldsymbol{g})
&= \sup\bigl\{ \left| f \ast \zeta_{\boldsymbol{t}} (\boldsymbol{g}') \right| :
\boldsymbol{g}' \in R(\boldsymbol{g}, \beta\boldsymbol{t}) ,
\boldsymbol{t} \in \boldsymbol{T} \bigr\}  \\
&= \sup\bigl\{ \left| f \ast (\mathcal{R}_{\boldsymbol{h}}\zeta)_{\boldsymbol{t}} (\boldsymbol{g}) \right| :
\boldsymbol{h} \in R(\boldsymbol{o}, \beta,\beta) ,
\boldsymbol{t} \in \boldsymbol{T} \bigr\} ,
\end{aligned}
\]
and $\mathcal{R}_{\boldsymbol{h}}\zeta$ is a uniformly bounded ($\beta$-dependent) multiple of a function in $\mathsf{F}(\boldsymbol{G})$ when $\boldsymbol{h} \in R(\boldsymbol{o},\beta,\beta)$ we deduce that
\begin{align*}
\mathcal{M}_{\zeta,\beta} f(\boldsymbol{g})
\lesssim_\beta \mathcal{G}(f)(\boldsymbol{g})
\qquad\forall \boldsymbol{g} \in \boldsymbol{G} .
\end{align*}

Take functions $\varphi^{[i]}$ on $G_i$ such that $\left\Vert  \varphi^{[i]} \right\Vert_{\mathsf{M}(G_i)} \leq 1$ and $\int_{G_i} \varphi^{[i]} \,\mathrm{d}g_i \neq 0$.
From the discussion above, it will suffice to prove that there exists $\theta \in (0,1)$ such that
\begin{align}\label{product domination}
\mathcal{M}_{\zeta,0} f(\boldsymbol{g})
\lesssim \Bigl( \mathcal{M}_S \bigl( \left| \mathcal{M}_{\varphi,0} (f)(\boldsymbol{g}) \right| ^\theta \bigr) \Bigr)^{1/\theta}
\qquad\forall \boldsymbol{g} \in \boldsymbol{G}
\end{align}
for all $\beta \geq 0$, all $\zeta \in \mathsf{F}(\boldsymbol{G})$, and all $f\in \mathsf{L}^1( \boldsymbol{G})$, for then the $\mathsf{L}^{1/\theta}(\boldsymbol{G})$ boundedness of $\mathcal{M}_S$ shows that
\[
\begin{aligned}
\left\Vert \mathcal{G}(f)\right\Vert_{\mathsf{L}^1(\boldsymbol{G})}
&\lesssim \int_{\boldsymbol{G}} \Bigl(  \mathcal{M}_S \bigl( \left| \mathcal{M}_{\varphi,0} (f) \right| ^\theta \bigr) \Bigr)^{1/\theta} \,\mathrm{d}\boldsymbol{g}
\lesssim
\left\Vert  \mathcal{M}_{\varphi,0} (f) \right\Vert_{\mathsf{L}^1(\boldsymbol{G})} ,
\end{aligned}
\]
which implies the required result.
We may assume that $f$ is continuous, by mollification.

To prove \eqref{product domination}, we make and confirm three claims, which together imply the result.

\emph{Claim 1}: for $\theta$ less than but close to $1$,
\begin{align}\label{moser 1}
\left| f \ast \zeta_{\boldsymbol{t}} \right|
&\lesssim_\theta \left\Vert   \psi  \right\Vert_{{\mathsf{M}_0}(\boldsymbol{G})}
\bigl(  \mathcal{M}_S \bigl( \left| \mathcal{M}_{\varphi,0}(f) \right| ^\theta \bigr) \bigr)^{1/\theta}
\end{align}
for all $f\in \mathsf{L}^1( \boldsymbol{G})$, all $\boldsymbol{t} \in \boldsymbol{T}$ and all $\zeta$ of the form $\psi_1 \otimes \psi_2$, where $\psi_1\in {\mathsf{M}_0}(G_1)$ and $\psi_2\in {\mathsf{M}_0}(G_2)$.
To prove this claim, we use Theorem \ref{dcrf-22}, which tells us that
\begin{align}\label{discrf2}
f \ast \psi_{\boldsymbol{t}}(\boldsymbol{g})
= \sum_{R \in \mathscr{P}(\boldsymbol{G})}\left| R\right|  \left\langle f , \varphi_{R} \right\rangle
{\tilde\varphi}_{R}\ast \psi_{\boldsymbol{t}}(\boldsymbol{g}) .
\end{align}

On the one hand, once $f$ is given, we may choose $\sigma$ such that
\[
\left| \left\langle f , \varphi_{R} \right\rangle\right|
= \left| (f \ast [\varphi]_{\boldsymbol{\ell}(R)}) (\sigma(R))\right|
= \min\{ \left| (f \ast [\varphi]_{\boldsymbol{\ell}(R)} ) (\boldsymbol{h}) \right|  : \boldsymbol{h} \in \bar R \} ,
\]
and on the other, by the almost orthogonality estimate of \cite[(4.4)]{HLW}, for all choices of $\sigma$ and all choices of $\boldsymbol{h}$ in $R$,
\[
\begin{aligned}
\left|  {\tilde\varphi}_{R} \ast \psi_{\boldsymbol{t}}(\boldsymbol{g}) \right|
&\lesssim \mu_{\boldsymbol{j},\boldsymbol{t}}({\boldsymbol{h}}^{-1}\boldsymbol{g}) \\
&\eqsim
\frac { (\ell_1(R) \wedge t_1)^{\varepsilon} (\ell_1(R) t_1)^{-\varepsilon}} {(\ell_1(R) \wedge t_1)^{-1} + \rho_1(h_1^{-1} g_1))^{Q_1+\varepsilon}}
\frac { (\ell_2(R) \wedge t_2)^{\varepsilon} (\ell_2(R) t_2)^{-\varepsilon}} {(\ell_2(R) \wedge t_2)^{-1} + \rho_1(h_2^{-1} g_2))^{Q_2+\varepsilon}} .
\end{aligned}
\]
Here $\mu_{\boldsymbol{j},\boldsymbol{t}}$ is the least decreasing biradial majorant for all the functions ${\tilde\varphi}_{R} \ast \psi_{\boldsymbol{t}}(\boldsymbol{h}^{-1}\cdot)$ when $\boldsymbol{h} \in R \in \mathscr{P}^{\boldsymbol{j}}(\boldsymbol{G})$, and  $a \wedge b$ denotes the minimum of $a$ and $b$.
Recall that the ``sidelengths'' of the cubes making the rectangle $R \in \mathscr{P}^{\boldsymbol{j}}(\boldsymbol{G})$ are $\kappa^{j_1}$ and $\kappa^{j_2}$.
Then, by also using \eqref{discrf2} and \eqref{eq:decay-max}, we see that
\begin{align}
\left|  f \ast \psi_{\boldsymbol{t}}(\boldsymbol{g}) \right| ^\theta
&\leq \biggl( \sum_{\boldsymbol{j} \in \mathbb{Z}^2}\sum_{R\in\mathscr{P}^{\boldsymbol{j}}(\boldsymbol{G})}  \left| R\right| \min_{\boldsymbol{g} \in \bar R}
\mathcal{M}_{\varphi,0}(f)( {\boldsymbol{g}}) \left|  {\tilde\varphi}_{R} \ast \psi_{\boldsymbol{t}}(\boldsymbol{g}) \right|  \biggr)^\theta
\notag\\
&\leq \biggl( \sum_{\boldsymbol{j} \in \mathbb{Z}^2}\sum_{R\in\mathscr{P}^{\boldsymbol{j}}(\boldsymbol{G})}  \left| R\right|^\theta \biggl( \min_{\boldsymbol{g} \in \bar R}
\mathcal{M}_{\varphi,0}(f)( {\boldsymbol{g}}) \biggr)^\theta
\left|  {\tilde\varphi}_{R} \ast \psi_{\boldsymbol{t}}(\boldsymbol{g}) \right| ^\theta \biggr)
\notag\\
&= \biggl( \sum_{\boldsymbol{j} \in \mathbb{Z}^2}\sum_{R\in\mathscr{P}^{\boldsymbol{j}}(\boldsymbol{G})}  \left| R\right|^{\theta-1} \int_{R} \biggl( \min_{\boldsymbol{g} \in \bar R}
\mathcal{M}_{\varphi,0}(f)( {\boldsymbol{g}}) \biggr)^\theta
\left|  {\tilde\varphi}_{R} \ast \psi_{\boldsymbol{t}}(\boldsymbol{g}) \right| ^\theta \,\mathrm{d}\boldsymbol{h}\biggr)
\label{eeeee1}\\
&\lesssim \biggl( \sum_{\boldsymbol{j} \in \mathbb{Z}^2} (\kappa_1^{j_1Q_1}\kappa_2^{j_2Q_2})^{\theta-1}
\int_{\boldsymbol{G}} \biggl(
\mathcal{M}_{\varphi,0}(f)(\boldsymbol{h}) \biggr)^\theta \mu_{\boldsymbol{j},\boldsymbol{t}}(\boldsymbol{h}^{-1}\boldsymbol{g})^\theta \,\mathrm{d}\boldsymbol{h}\biggr)
\notag\\
&\leq \biggl( \sum_{\boldsymbol{j} \in \mathbb{Z}^2} (\kappa_1^{j_1Q_1}\kappa_2^{j_2Q_2})^{\theta-1}
\left\Vert  \mu_{\boldsymbol{j},\boldsymbol{t}}^\theta\right\Vert_{\mathsf{L}^1(\boldsymbol{G})} \mathcal{M}_{S} \left(
\mathcal{M}_{\varphi,0}(f) \right)^\theta (\boldsymbol{g}) \biggr).
\notag
\end{align}

If $\max\{ Q_1/(Q_1+\varepsilon), Q_2/(Q_2 + \varepsilon)\}  < \theta < 1$, then computation shows that
\begin{align*}
\sum_{\boldsymbol{j} \in \mathbb{Z}^2} (\kappa_1^{j_1Q_1}\kappa_2^{j_2Q_2})^{\theta-1}
\left\Vert  \mu_{\boldsymbol{j},\boldsymbol{t}}^\theta\right\Vert_{\mathsf{L}^1(\boldsymbol{G})} < \infty.
\end{align*}
Thus the right-hand side of \eqref{eeeee1} is bounded by a multiple of
\[
\bigl( \mathcal{M}_S \bigl(\left| \mathcal{M}_{\varphi,0}(f) \right| ^\theta\bigr)\bigr) ( \boldsymbol{g}^R),
\]
which implies \eqref{moser 1} and proves our claim.

\emph{Claim 2}: for $\theta$ less than but close to $1$,
\begin{align*}
\left| f \ast \zeta_{\boldsymbol{t}}(\boldsymbol{g}) \right|
&\lesssim_\theta \left\Vert   \zeta_1  \right\Vert_{{\mathsf{M}_0}(G_1)}
\bigl( \mathcal{M}_S \bigl( \left| \mathcal{M}_{\varphi,0}(f) \right| ^\theta \bigr) (\boldsymbol{g}) \bigr)^{1/\theta}
\end{align*}
for all $f\in \mathsf{L}^1( \boldsymbol{G})$, all $\boldsymbol{t} \in \boldsymbol{T}$, where $\zeta = \psi_1 \otimes \varphi_2$; here $\psi_1 \in {\mathsf{M}_0}(G_1)$.

The proof of this claim involves use of a reproducing formula that involves the first variable only, namely,
\[
f^{[1]} \ast_1 \psi^{[1]}_{t_1}(g_1)
= \sum_{Q \in \mathscr{Q}(G_1)}\left| Q\right|
f^{[1]} \ast_1 [\varphi^{[1]}]_{\ell(Q)}(\sigma(Q))
\tilde\varphi^{[1]}_{Q} \ast \psi^{[1]}_{t_1}(g_1)
\qquad\forall g_1 \in G_1
\]
where $f^{[1]} \in \mathsf{L}^1(G_1)$.
This implies that
 \[
f \ast \zeta_{\boldsymbol{t}}(\boldsymbol{g})
= \sum_{Q \in \mathscr{Q}(G_1)}\left| Q\right|
f \ast \bigl( [\varphi^{[1]}]_{\ell(Q)}\otimes\varphi^{[2]}_{t_2}\bigr)(\sigma(Q),g_2)
\tilde\varphi^{[1]}_{Q} \ast \psi^{[1]}_{t_1}(g_1)
\qquad\forall \boldsymbol{g} \in \boldsymbol{G},
\]
when $f \in \mathsf{L}^1(\boldsymbol{G})$; a similar argument to that for Claim 1 may be used.
We see that
\begin{equation*}
\begin{aligned}
\left|  {\tilde\varphi}_{R} \ast \zeta_{\boldsymbol{t}}(\boldsymbol{g}) \right| ^\theta
&\leq \biggl( \sum_{j_1 \in \mathbb{Z}}\sum_{Q\in\mathscr{Q}^{j_1}(G_1)}
\left| Q\right| \min_{g_1 \in \bar Q}
\mathcal{M}_{\varphi,0}(f)( {g_1,g_2}) \left|  {\tilde\varphi}^{[1]}_{Q} \ast \psi^{[1]}_{t_1}(g_1) \right|  \biggr)^\theta \\
&\leq \biggl( \sum_{j_1 \in \mathbb{Z}} (\kappa_1^{j_1Q_1})^{\theta-1}
\left\Vert  \bigl(\mu^{[1]}_{j_1,t_1}\bigr)^\theta\right\Vert_{\mathsf{L}^1(G_1)}
(\mathcal{M}_{1} \otimes \mathcal{I}_2) \left(
|\mathcal{M}_{\varphi,0}(f)|^\theta \right) (\boldsymbol{g}) \biggr) \\
&\lesssim
\mathcal{M}_{S} \left(
|\mathcal{M}_{\varphi,0}(f)|^\theta \right) (\boldsymbol{g})  ,
\end{aligned}
\end{equation*}
as claimed; here $\mathcal{I}_2$ denotes the identity operator acting on functions on $G_2$.

\emph{Claim 3}: for $\theta$ less than but close to $1$,
\begin{align*}
\left| f \ast \psi_{\boldsymbol{t}}(\boldsymbol{g}) \right|
&\lesssim_\theta \left\Vert   \psi_2  \right\Vert_{{\mathsf{M}_0}(G_2)}
\bigl(  \mathcal{M}_S \bigl( \left| \mathcal{M}_{\varphi,0}(f) \right| ^\theta \bigr) (\boldsymbol{g}) \bigr)^{1/\theta}
\end{align*}
for all $f\in \mathsf{L}^1( \boldsymbol{G})$, all $\boldsymbol{t} \in \boldsymbol{T}$, where now $\psi_1 = \varphi_1$ while $\psi_2\in {\mathsf{M}_0}(G_2)$.
The proof of this is a very minor modification of that of Claim 2.

To finish the proof, we must estimate $\mathcal{M}_{\zeta,0}f$, where $\zeta_1 \in \mathsf{M}(G_1)$ and $\zeta_2 \in \mathsf{M}(G_2)$.
We write $\zeta_1  = c_1 \varphi_1 + \psi_1$, where $\psi \in \mathsf{M}_0(G_1)$ and $c_1$ is chosen to make the integrals of both sides equal, and we decompose $\zeta_2$ analogously.
Then $\mathcal{M}_{\zeta,0}f(\boldsymbol{g})$ is dominated by a sum of four terms,
each of which is bounded pointwise by $(\mathcal{M}_S(\mathcal{M}_{\varphi,0}f)^\theta)^{1/\theta}(\boldsymbol{g})$.
This proves the desired inequality and hence Proposition \ref{prop equiv max}.

\section{Concluding remarks}

Many of our results can be proved in greater generality.
For example, Proposition \ref{prop equiv max} should be true on much more general spaces of homogeneous type.
Other results require the structure of stratified group that we have used here.
These include Theorem \ref{thm Riesz} and Proposition \ref{prop poisson L-P and max}.
Indeed, the first relies on the Christ--Geller singular integral characterisation of the Hardy space on stratified groups, and the second on various properties of the Poisson kernel.
It is an interesting challenge to extend either of these to a more substantial class of nilpotent Lie groups, let alone to general spaces of homogeneous type.

\section{Thanks}
The authors thank the unknown referees for their careful reading of and helpful comments about the paper.

J. Li and M. Cowling are supported by the Australian Research Council (ARC) through
research grant DP220100285.
L. Yan was supported by the NNSF of China, Grant No.~11871480, and by the Australian Research Council (ARC) through research grant DP190100970.

\end{document}